\providecommand{\U}[1]{\protect\rule{.1in}{.1in}}
\newtheorem{theorem}{Theorem}[section]
\newtheorem{proposition}[theorem]{Proposition}
\newtheorem{corollary}[theorem]{Corollary}
\newtheorem{example}[theorem]{Example}
\newtheorem{remark}[theorem]{Remark}
\newtheorem{lemma}[theorem]{Lemma}
\newtheorem{final remark}[theorem]{Final Remark}
\newtheorem{definition}[theorem]{Definition}
\begin{document}

\title{Summing multilinear operators and sequence classes}
\author{Geraldo Botelho\thanks{Supported by CNPq Grant
304262/2018-8 and Fapemig Grant PPM-00450-17.}\,\, and  Davidson Freitas\thanks{Supported by a CNPq scholarship.\newline 2020 Mathematics Subject Classification: 46G25, 47B10, 47H60, 47L22.\newline Keywords: Banach spaces, summing multilinear operators, Banach ideals, sequence classes.
}}
\date{}
\maketitle

\begin{abstract}
We construct a general framework that generates classes of multilinear operators between Banach spaces which encompasses, as particular cases, the several classes of summing type multilinear operators that have been studied individually in the literature. Summing operators by blocks in the isotropic and anisotropic cases are taken into account. The classes we create are shown to be Banach ideals of multilinear operators and applications to coherence and coincidence theorems are provided.
\end{abstract}

\section{Introduction}
\label{seccao-2.1-introducao}

A. Pietsch systematized the theory of ideals of linear operators (operator ideals) 
in \cite{pietsch_operator_1978} and, shortly after, sketched the theory of ideals of multilinear operators in 
\cite{pietsch_ideals_1984}. This field has proved to be quite fruitful and a large number of contributions have appeared in the last decades, 
 see, e.g., \cite{achour_factorization_2014, albuquerque_summability_2018, albuquerque_note_2019, albuquerque_absolutely_2015, araujo_optimal_2015, bayart_multiple_2018,botelho_type_2016, botelho_transformation_2017, botelho_operator_2017, campos_cohen_2014,dimant_diagonal_2019,nunez-alarcon_sharp_2019,ribeiro_generalized_2019}. Several authors have devoted their efforts to study multilinear generalizations of the ideals of absolutely $(q;p)$-summing linear operators to the multilinear context. Several promising classes of multilinear operators have arisen from these efforts, for example, the class of absolutely summing multilinear operators, initiated in \cite{alencar_some_1989, pietsch_ideals_1984}, and the class of multiple summing operators introduced in \cite{matos_fully_2003} and, independently, in \cite{bombal_multilinear_2004}. Also, other operator ideals defined, or characterized, by the transformation of vector-valued sequences, were transposed to the multilinear case, and, again, in different manners. In this paper we construct an abstract framework that recover as particular instances all these classes of multilinear operators tha have been individually studied in the literature. 

Throughout the paper, $d\geq 1$ is a natural number, 
$\mathcal{L}(E_1,\ldots,E_d;F)$ denotes the Banach space of continuous $d$-linear operators quando 
from $E_1\times \cdots \times E_d$ to $F$, where $E_1,\ldots, E_d,F$ will always be Banach spaces over $\mathbb{K}=\mathbb{R}$ or $\mathbb{C}$, endowed with the usual operator norm. The topological dual of the Banach space $E$ is denoed by $E^*$ and the closed unit ball of $E$ by $B_E$. For the general theory of multilinear operators we refer to 
\cite{dineen_complex_1999, mujica_complex_1986}. 

According to the first transposition of the absolutely summing operators to the multilinear setting, which goes back to
\cite{pietsch_ideals_1984}, an operator $A\in\mathcal{L}(E_1,\ldots,E_d;F)$ is said to be absolutely $(q;p_1,\ldots,p_d)$-summing, $\frac{1}{q}\leq \frac{1}{p_1}+\cdots+\frac{1}{p_d}$  and $q,p_1,\ldots,p_d\in [1,\infty)$, if there exists a constant $C>0$ such that
\begin{align}
    \left(\sum_{j=1}^k\| A(x_j^1,\ldots,x_j^d)\|^q\right)^{\frac{1}{q}}\leq C\cdot \prod_{i=1}^d\sup_{\varphi_i\in B_{E_i^*}} \left(\sum_{j=1}^k\| \varphi_i(x_j^i)\|^{p_i}\right)^{\frac{1}{p_i}},
    \label{eq-2.1}
\end{align}
for every $k\in\mathbb{N}$ and all  $x_j^i\in E_i$, $j=1,\ldots,k$, $i=1,\ldots,d$.

Going deeper into the multilinear aspects of the theory, 
Matos \cite{matos_fully_2003} 
and Bombal, Villanueva and Pérez-García \cite{ bombal_multilinear_2004}  define a multilinear operator $A\in\mathcal{L}(E_1,\ldots,E_d;F)$ to be multiple $(q;p_1,\ldots,p_d)$-summing, 
$q\geq p_r$, $r=1,\ldots,d$, if there exists a constant $C\geq 0$ such that
    \begin{align}
    \left(\sum_{j_1,\ldots,j_d=1}^k\| A(x_{j_1}^1,\ldots,x_{j_d}^d)\|^q\right)^{\frac{1}{q}}\leq C\cdot \prod_{i=1}^d\sup_{\varphi_i\in B_{E_i^*}} \left(\sum_{j=1}^k\| \varphi_i(x_j^i)\|^{p_i}\right)^{\frac{1}{p_i}},
    \label{operadores_multiplo_matos}
\end{align}
for every $k\in\mathbb{N}$ and all  $x_j^i\in E_i$, $j=1,\ldots,k$, $i=1,\ldots,d$.

We are interested in the following four directions in which the theory has been developed nowadays:


(i) An operator $A$ is absolutely $(q;p_1, \ldots, p_d)$-summing if and only if, for all $E_i$-valued weakly $p_i$-summable sequences $(x_j^i)_{j=1}^{\infty}$, $i = 1, \ldots, d$,  the sequence 
$\left(A(x_j^1,\ldots,x_j^d)\right)_{j=1}^{\infty}$ is absolutely $q$-summable in $F$. 
Several new classes of multilinear operators appeared considering other types of summability of sequences, such as: $p$-dominated operators \cite{blasco_coincidence_2016, popa_composition_2014}, almost summing operators \cite{botelho_almost_2001,pellegrino_almost_2012, pellegrino_fully_2006}; weakly $(q;p_1, \ldots, p_d)$-summing operators \cite{blasco_coincidence_2016,botelho_transformation_2017,kim_multiple_2007,popa_mixing_2012}, 
   strongly Cohen $p$-summing opertors \cite{achour_cohen_2007,campos_cohen_2014,mezrag_inclusion_2009}. A unification of all these types of operators was proposed in \cite{botelho_transformation_2017} by means of the concept of sequence classes. 

(ii) The definition (\ref{operadores_multiplo_matos}) of multiple summing operators takes into account the sum over all indices independently, that is, all multi-indices $(n_1, \ldots, n_d) \in \mathbb{N}^d$, whereas the definition (\ref{eq-2.1}) considers only the diagonal $D(\mathbb{N}^d) := \{(j,\ldots,j) : j \in \mathbb{N}\}$ of $ \mathbb{N}^d$. The sum in all/several indices has been deeply explored, including in the study of classical inequalities and applications to other areas (see, e.g., 
\cite{bayart_multiple_2018,defant_coordinatewise_2010, montanaro_applications_2012, perez-garcia_unbounded_2008}).

(iii) Another direction is the consideration of sums over sets of indices other than the diagonal or the whole $\mathbb{N}^d$, including arbitrary subsets of $\mathbb{N}^d$, usually referred to as blocks. For some recent works in this direction, see 
\cite{albuquerque_summability_2018,albuquerque_summability_2019, araujo_classical_2016, bayart_coincidence_2020, botelho_summability_2008,zalduendo_estimate_1993}. 

(iv) The replacement of the sum in the left-hand side of (\ref{operadores_multiplo_matos}) with iterated sums of the type
\begin{align*}
    \left(\sum_{j_1=1}^{\infty}\left(\cdots\left(\sum_{j_d=1}^{\infty}\| A(x_{j_1}^1,\ldots,x_{j_d}^d)\|^{q_d}\right)^{\frac{q_{d-1}}{q_d}}\cdots\right)^{\frac{q_1}{q_2}}\right)^{\frac{1}{q_1}}
\end{align*}
has led to very interesting results, including applications to the study of classical inequalities, such as Bohnenblust-Hille's inequality and Hardy-Littlewood's inequality, see, e.g.,
\cite{albuquerque_summability_2018, albuquerque_note_2019, albuquerque_holders_2017a, albuquerque_summability_2019, albuquerque_sharp_2014, albuquerque_optimal_2016,albuquerque_applications_2017, aron_optimal_2017, bayart_multiple_2018, nunez-alarcon_sharp_2019}. This is the anisotropic case, whereas the other cases are the isotropic ones.

A unification of all these directions was attempted by the authors in \cite{botelho_summing_2020}, where a number of classes already considered in the literature are recovered as particular instances of an abstract scheme. The fact that some already studied classes, including some important ones, are not encompassed by the construction in \cite{botelho_summing_2020} impelled us to construct a new environment, which we present in this paper, encompassing, as particular instances, all classes of  summing-type multilinear operators defined, or characterized, by the transformation of sequences that have been investigated thus far.

The construction and the proof that the classes we create are Banach ideals of multilinear operators are presented in Section 2. In Section 3 we show that our construction recovers the classes studied in the literature, including some classes not recovered in \cite{botelho_summing_2020}. In Section 4 we give applications to the theory of coherent ideals and coincidence theorems.


\section{The construction}
\label{sec-2.2-construcao}

Throughout the paper, $d$ and $k$ are natural numbers with  $1\leq k\leq d$, $(j^1,\ldots,j^d)$ is an element of $\mathbb{N}^d$, $\mathcal{I} = \{I_1,\ldots,I_k\}$ is a partition of $\{1,\ldots,d\}$, that is, an ordered collection of pairwise disjoint subsets of $\{1,\ldots,d\}$ whose unioin is $\{1,\ldots,d\}$. For $x \in E$, we write $x\cdot e_j= (0,\ldots,0,x,0,\ldots) \in E^{\mathbb{N}}$  and $x *e_j = (0,\ldots,0,x,0,\ldots,0) \in E^d $, where $x$ is placed at the $j$-th coordinate. 

\subsection{Blocks in $\mathbb{N}^d$}
\label{subsec_blocos}

\begin{definition}\rm\label{definicao-bloco}
     Given $d$ sequences of natural numbers $(j_n^r)_{n=1}^{\infty}$, $r=1,\ldots,d$, such that the correspondence
    \begin{align}
		(n_1,\ldots , n_k)\in \mathbb{N}^k \mapsto \sum_{s=1}^k \sum_{r\in I_s} j_{n_s}^r *e_r
		\label{injetividade_bloco}
    \end{align}
    is injective, the {\it block of $\mathbb{N}^d$ associated to the partition $\mathcal{I} = \{I_1,\ldots,I_k\}$ and to the sequences $(j_n^r)_{n=1}^{\infty}$, $r=1,\ldots,d$,} is the subset of $\mathbb{N}^d$ defined by
    \begin{align}
	    B_{\mathcal{I}}=\left\{\sum_{s=1}^k \sum_{r\in I_s} j_{n_s}^r *e_r \in \mathbb{N}^d: n_1,\ldots , n_k\in \mathbb{N} \right\}.
	    \label{eq-1}
    \end{align}
\end{definition}

The understanding of the expression $\sum\limits_{s=1}^k \sum\limits_{r\in I_s} j_{n_s}^r *e_r$ is crucial in our construction. 
This finite sum represents an element $(j^1_{m_1},\ldots,j^d_{m_d}) \in \mathbb{N}^d$, where $m_n = m_l=n_s$ for some $s\in \{1,\ldots, k\}$ whenever $n,l\in I_s$, that is, each natural $j^r_{m_r}$ is indexed by $m_r = n_s$ for every $r\in I_s$. Obviously the injectivity guarantees that each position of the block is achieved exactly once. The idea is not to check the injectivity of a given expression (\ref{injetividade_bloco}), but, given $B  \subseteq \mathbb{N}^d$, to find a partition of $\{1, \ldots, d\}$ and $d$ sequences of naturals  such that   $B$ is a block of $\mathbb{N}^d$ associated to the partition and to the sequences. 

Let us see some blocks we shall use later. 

\begin{example}\label{exemplo_blocos_a}\rm
    Given an arbitrary partition $\mathcal{I} = \{I_1,\ldots,I_k\}$ of $\{1,\ldots,d\}$, defining $(j_n^1)_{n=1}^{\infty} = \cdots = (j_n^d)_{n=1}^{\infty} = (n)_{n=1}^{\infty}$, the correspondence 
    \begin{align*}
		(n_1,\ldots , n_k)\in \mathbb{N}^k \mapsto \sum_{s=1}^k \sum_{r\in I_s} j_{n_s}^r *e_r &= \sum_{s=1}^k \sum_{r\in I_s} n_s *e_r= \sum_{r\in I_1} n_1 *e_r +\cdots + \sum_{r\in I_k} n_k *e_r,
    \end{align*}
    is injective: if
    \begin{align*}
        \sum_{r\in I_1} n_1 *e_r +\cdots + \sum_{r\in I_k} n_k *e_r = \sum_{r\in I_1} m_1 *e_r +\cdots + \sum_{r\in I_k} m_k *e_r,
    \end{align*}
then $n_1 = m_1$ for every $r\in I_1, \ldots$, $n_k = m_k$ for every $r\in I_k$, that is, $(n_1,\ldots,n_k) = (m_1,\ldots,m_k)$. 

In the case of an arbitrary partition, this block shall be used in Section \ref{secao_parcialmente_multiplo_somante}. It shall also be used for the following two specific partitions. 
    Denoting by $\mathcal{I}_t := \{1,\ldots,d\}$ the trivial partition ($k = 1$), we get the diagonal  $$B_{\mathcal{I}_t} : = D(\mathbb{N}^d)= \{(j,\ldots,j)\in\mathbb{N}^d\}.$$
And denoting by $\mathcal{I}_d := \{\{r\}:r=1,\ldots,d\}$ the discrete partition ($k = d$), the corresponding block is $B_{\mathcal{I}_d} = \mathbb{N}^d$.
\end{example}

%

\begin{example}\label{exemplo_blocos_c}\rm Let us see that every infinite subset of $\mathbb{N}^d$ is a block associated to the trivial partition ${\cal I}_t$ in the sense of Definition \ref{definicao-bloco}. 
Given $B \subset \mathbb{N}^d$ infinite, choose a bijection $\sigma \colon \mathbb{N}\longrightarrow B$. For each $(j^1,\ldots,j^d)\in B$ let $n\in\mathbb{N}$ be such that $(j^1,\ldots,j^d) = \sigma(n)$. For $r\in\{1,\ldots,d\}$ and $n\in\mathbb{N}$, call $j_n^r$ the $r$-th coordinate of $\sigma(n)$. So, the correspondence (\ref{injetividade_bloco})
$$n\in\mathbb{N}\mapsto \sum_{r=1}^d j_n^r *e_r = j_n^1 *e_1 + \cdots +j_n^d *e_d= (j_n^1,\ldots,j_n^d) = \sigma(n)\in B,$$
is the bijection $\sigma$ itself and $B_{\mathcal{I}_t} = \{ \sigma(n):n\in\mathbb{N}^d\} = B$.

\end{example}

By the example above we could restrict ourselves to the trivial partition. We shall not do that because sometimes it is more convenient to recognized a certain subset of 
$\mathbb{N}^d$ as a block associate do some other partition. It is not difficult to see that not every infinite subset of $\mathbb{N}^d$ is a block associated to the discrete partition.  

%


From now on, we fix the natural numbers $1\leq k\leq d$, a partition  $\mathcal{I} = \{I_1,\ldots,I_k\}$ of  $\{1,\ldots,d\}$, the sequences of natural numbers $(j_n^r)_{n=1}^{\infty}$, $r=1,\ldots,d$, fulfilling the conditions of Definition \ref{definicao-bloco} and, consequently, the block $B_{\mathcal{I}}$  associated to this partition and to these sequences.

\subsection{$(B_{\mathcal{I}};X_1,\ldots,X_d;Y_1,\ldots,Y_k)$-summing operators}
By $c_{00}(E)$ and $\ell_{\infty}(E)$ we denote the spaces of eventually null and bounded $E$-valued sequences, respectively. The symbol $E\stackrel{1}{\hookrightarrow} F$ means that $E$ is a linear subspace of $F$ and  com $\| x\|_F\leq \| x\|_E$ for every $x\in E$.

\begin{definition}\rm \cite[Definition 2.1]{botelho_transformation_2017} \label{definicao_classe_sequências}
	A \textit{sequence class} is a rule $X$ that assigns to each Banach space $E$ a Banach space $X(E)$ of $E$-valued sequences, with the usual sequence operations, such that 
	$c_{00}(E)\subset X(E)\stackrel{1}{\hookrightarrow} \ell_{\infty}(E)$ for every $E$ and  $\| e_j\|_{X(\mathbb{K})} = 1$ for every $j\in\mathbb{N}$.  
    The sequence class $X$ is \textit{linearly stable} if, regardless of the Banach spaces $E$ and $F$, $u\in\mathcal{L}(E;F)$ and $(x_j)_{j=1}^{\infty}\in X(E)$ it holds
		\begin{align*}
		    (u(x_j))_{j=1}^{\infty}\in X(F) \mbox{ and } \| (u(x_j))_{j=1}^{\infty}\|_{X(F)}\leq \| u\|\cdot\| (x_j)_{j=1}^{\infty} \|_{X(E)}.
		\end{align*}
In this case, the induced map
    $$\widehat{u}\colon  X(E)\longrightarrow X(F)~,~ \widehat{u}\left((x_j)_{j=1}^{\infty}\right) = (u(x_j))_{j=1}^{\infty},$$
is a well defined bounded linear operator.
\end{definition}

 Plenty of examples can be found in \cite{botelho_transformation_2017, botelho_operator_2017, joedsonjamilson, ribeiro_generalized_2019, ribeiro_absolutely_2021}; we mention just a few: for $p \geq 1$,  the class $\ell_p(\cdot)$ of absolutely $p$-summable sequences, the class $\ell_p^w(\cdot)$ of weakly $p$-summable sequences, the class $c_0(\cdot)$ of norm null sequences, the class $\ell_{\infty}(\cdot)$ of bounded sequences, the class $\ell_p^u(\cdot)$ of unconditionally $p$-summable sequences, the class $\ell_p\langle\cdot\rangle$ of Cohen strongly $p$-summable sequences. In this section, $X_1, \ldots, X_d, Y_1,\ldots, Y_k$ denote arbitrary sequence classes.

Given a Banach space $F$ and sequence classes $Y_1$ and $Y_2$, $Y_1(Y_2(F))$ is space of sequences $((x_{j_1,j_2})_{j_2=1}^{\infty})_{j_1=1}^{\infty}\in (Y_2(F))^{\mathbb{N}}$ such that $((x_{j_1,j_2})_{j_2=1}^{\infty})_{j_1=1}^{\infty}\in Y_1(Y_2(F))$. Iterating we construct the space $Y_1(\cdots Y_k(F) \cdots )$. 
The next definition is inspired in \cite{albuquerque_summability_2018, albuquerque_anisotropic_2018, araujo_classical_2016, araujo_optimal_2015, botelho_summing_2020}.

\begin{definition}\rm \label{definicao-operadores somantes}
	A continuous $d$-linear operator $A\in\mathcal{L}(E_1,\ldots , E_d;F)$ is $(B_{\mathcal{I}};X_1,\ldots, X_d;$ $Y_1,\ldots,Y_k)$-summing if
	\begin{align}
		\left( \cdots \left( A\left( \sum_{s=1}^k \sum_{r\in I_s} x_{j_{n_s}^r}^r *e_r \right) \right)_{n_k=1}^{\infty} \cdots\right)_{n_1=1}^{\infty} \in Y_1(\cdots Y_k(F) \cdots ),
		\label{expressao_def_operadores_somantes}
	\end{align}
whenever $(x_j^r)_{j=1}^{\infty}\in X_r(E_r)$, $r=1,\ldots,d$.
\end{definition}

The case where $\cal I$ is the trivial partition ${\cal I}_t$, that is, $k=1$ and $\mathcal{I}_t = \{1,\ldots,d\}$, is called the {\it isotropic case}. The other cases, where we have mixed sums, are called {\it anisotropic cases}.

In the $d$-tuple $\sum\limits_{s=1}^k \sum\limits_{r\in I_s} x_{j_{n_s}^r}^r *e_r\in E_1\times \cdots \times E_d$, for each $s\in \{1,\ldots,k\}$ the elements in the $r$-th coordinate, $r\in I_s$, are indexed by $j^r_{n_s}$ which are indexed by $n_s$, that is, the indices of the $r$-th coordinates are indexed by the same $n_s$. Next we see a few illustrative examples, in which the sequences $(x_j^r)_{j=1}^{\infty}\in X_r(E_r)$, $r=1,\ldots,d$, are given.

\begin{example}\rm \label{exemplo_expressao_a}
    Considering an arbitrary partition $\mathcal{I}= \{I_1,\ldots,I_k\}$ and the sequences $(j_n^1)_{n=1}^{\infty} = \cdots = (j_n^d)_{j=1}^{\infty} = (n)_{n=1}^{\infty}$, we have 
    \begin{align*}
        \sum\limits_{s=1}^k \sum\limits_{r\in I_s} x_{j_{n_s}^r}^r *e_r &= \sum\limits_{r\in I_1} x_{n_1}^r *e_r+\cdots + \sum\limits_{r\in I_k} x_{n_k}^r *e_r,
    \end{align*}
that is, all vectors in the $r$-th coordinate, $r\in I_s$, $s=1,\ldots,k$, are indexed by the same $n_s$. Take the trivial partition $\mathcal{I}_t$. For all $n \in \mathbb{N}$ and $x_n^r\in E_r$, $r=1,\ldots,d$,
    \begin{align*}
        \sum\limits_{s=1}^k \sum\limits_{r\in I_s} x_{j_{n_s}^r}^r *e_r &= \sum\limits_{r\in \{1,\ldots,d\}} x_{n}^r *e_r = x_n^1 *e_1 + \cdots + x_n^d *e_d \\
        &= (x_n^1,0,\ldots,0) + \cdots + (0,\ldots, 0 ,x_n^d)= (x_n^1,\ldots,x_n^d).
    \end{align*}
    Take now the discrete partition $\mathcal{I}_d  = \{I_1,\ldots, I_d\} = \{\{1\}, \ldots, \{d\}\}$. For all  $n_r\in \mathbb{N}$ and $x_{n_r}^r\in E_r$, $r=1,\ldots,d$,
    \begin{align*}
        \sum\limits_{s=1}^k \sum\limits_{r\in I_s} x_{j_{n_s}^r}^r *e_r &= \sum_{s=1}^d\sum\limits_{r\in I_s} x_{n_s}^r *e_r = \sum\limits_{r\in I_1} x_{n_1}^r *e_r + \cdots + \sum\limits_{r\in I_d} x_{n_d}^r *e_r \\
        &= \sum\limits_{r\in \{1\}} x_{n_1}^r *e_r + \cdots + \sum\limits_{r\in \{d\}} x_{n_d}^r *e_r=  x_{n_1}^1 *e_1 + \cdots  +x_{n_d}^d *e_d\\
        &= (x_{n_1}^1,0,\ldots,0) + \cdots + (0,\ldots, 0 ,x_{n_d}^d)= (x_{n_1}^1,\ldots ,x_{n_d}^d).
    \end{align*}
    \end{example}



By $\mathcal{L}_{X_1,\ldots,X_d;Y_1,\ldots,Y_k}^{B_{\mathcal{I}}}(E_1,\ldots,E_d;F)$ we mean the class of all operators in $\mathcal{L}(E_1,\ldots,E_d;F)$ that are  $(B_{\mathcal{I}};X_1,\ldots,X_d;Y_1,\ldots,Y_k)$-summing. Some simplifications will be adopted: \\
$\bullet$ $\mathcal{L}_{X_1,\ldots,X_d;Y_1,\ldots,Y_k}^{B_{\mathcal{I}}}(E_1,\ldots,E_d)$ if $F=\mathbb{K}$.\\
$\bullet$ $\mathcal{L}_{X_1,\ldots,X_d;Y_1,\ldots,Y_k}^{B_{\mathcal{I}}}(^dE;F)$ if $E_1 = \cdots = E_d=E$.\\
$\bullet$ $\mathcal{L}_{^dX;Y_1,\ldots,Y_k}^{B_{\mathcal{I}}}(E_1,\ldots,E_d;F)$ if $X_1=\cdots=X_d=X$.\\
$\bullet$ $\mathcal{L}_{X_1,\ldots,X_d;^kY}^{B_{\mathcal{I}}}(E_1,\ldots,E_d;F)$ if $Y_1=\cdots=Y_k=Y$.

Obvious combinations of these simplifications will be used too. 

\begin{remark}\rm In the isotropic case, where the partition is the trivial one, the sequences of natural numbers that define the block depend on a chosen bijection $\sigma \colon \mathbb{N}\longrightarrow B$ (cf. Example 
\ref{exemplo_blocos_c}). 
In this case, in (\ref{expressao_def_operadores_somantes}) we have a sequence whose terms are $A(x_{j_1}^1, \ldots, x_{j_d}^d)$ with $(j_1, \ldots, j_d) \in B$, but the order these terms appear in the sequence depend on 
$\sigma$. This dependence no longer exists 
if the sequence class $Y$ is  {\it symmetric} in the sense that, for every Banach space $F$ and any sequence $(y_j)_{j=1}^{\infty}$ in $F$, it holds $(y_j)_{j=1}^{\infty}\in Y(F)$ if an only if $(y_{s(j)})_{j=1}^{\infty}\in Y(F)$ for every permutation $s \colon \mathbb{N}\longrightarrow\mathbb{N}$ and, in this case, $\| (y_j)_{j=1}^{\infty}\|_{Y(F)} = \| (y_{s(j)})_{j=1}^{\infty}\|_{Y(F)}$. For instance, the sequence classes $c_0(\cdot), \ell_p(\cdot)$ and $\ell_p^w(\cdot)$, $1 \leq p \leq \infty$, are symmetric. Symmetric spaces of scalar-valued sequences are treated in \cite{mathnotes}. \label{obs-2}
\end{remark}

%


\begin{proposition} \label{proposicao_operador_induzido}
     An operator $A\in \mathcal{L}(E_1,\ldots,E_d;F)$ is $(B_{\mathcal{I}};X_1,\ldots, X_d;Y_1,\ldots,Y_k)$-sum-ming if and only if the induced map $\widehat{A}_{B_{\mathcal{I}}} \colon X_1(E_1)\times\cdots \times X_d(E_d)\longrightarrow$ $ Y_1(\cdots Y_k(F) \cdots )$,
            \begin{align*}
                \widehat{A}_{B_{\mathcal{I}}} \left( (x_j^1)_{j=1}^{\infty},\ldots , (x_j^d)_{j=1}^{\infty} \right) =\left( \cdots \left( A\left( \sum_{s=1}^k \sum_{r\in I_s} x_{j_{n_s}^r}^r* e_r \right) \right)_{n_k=1}^{\infty} \cdots\right)_{n_1=1}^{\infty},
            \end{align*}
is a well defined continuous $d$-linear operator.
\end{proposition}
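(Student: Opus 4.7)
The ``only if'' direction is immediate: if $\widehat{A}_{B_{\mathcal{I}}}$ is well defined as a map into $Y_1(\cdots Y_k(F)\cdots)$, then the sequence in (\ref{expressao_def_operadores_somantes}) belongs to that space for every choice of inputs, which is exactly the defining property of a $(B_{\mathcal{I}};X_1,\ldots,X_d;Y_1,\ldots,Y_k)$-summing operator. For the ``if'' direction, the well-definedness of $\widehat{A}_{B_{\mathcal{I}}}$ is again just the definition of $A$ being summing, so I have to check two things: $d$-linearity and continuity.

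\textbf{$d$-linearity.} This is straightforward. The $d$-linearity of $A$, combined with the fact that the vector operations in each sequence class $X_r(E_r)$ and in the iterated space $Y_1(\cdots Y_k(F)\cdots)$ are performed coordinatewise, yields linearity of $\widehat{A}_{B_{\mathcal{I}}}$ in each of its $d$ arguments.

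\textbf{Continuity.} The strategy is to first establish separate continuity by the closed graph theorem and then invoke the classical fact that a separately continuous $d$-linear map between Banach spaces is jointly continuous (a standard application of the uniform boundedness principle). Fix $i\in\{1,\ldots,d\}$ and sequences $(x_j^r)_{j=1}^{\infty}\in X_r(E_r)$ for $r\neq i$, and consider the linear map
\[
T_i\colon X_i(E_i)\longrightarrow Y_1(\cdots Y_k(F)\cdots),\quad T_i\bigl((x_j^i)_{j=1}^{\infty}\bigr)=\widehat{A}_{B_{\mathcal{I}}}\bigl((x_j^1)_{j=1}^{\infty},\ldots,(x_j^d)_{j=1}^{\infty}\bigr).
\]
Both the domain and the codomain are Banach spaces, so by the closed graph theorem it suffices to check that $T_i$ has closed graph. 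Suppose $(x^{(m)})_{m=1}^{\infty}$ converges to $(x_j^i)_{j=1}^{\infty}$ in $X_i(E_i)$ and $T_i(x^{(m)})$ converges to some element $\mathbf{y}=(\cdots (y_{n_1,\ldots,n_k})_{n_k=1}^{\infty}\cdots)_{n_1=1}^{\infty}$ in $Y_1(\cdots Y_k(F)\cdots)$.

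\textbf{Closing the graph.} Because $X_i(E_i)\stackrel{1}{\hookrightarrow}\ell_{\infty}(E_i)$, the convergence in $X_i(E_i)$ forces coordinatewise convergence in $E_i$, i.e.\ the $j$-th term of $x^{(m)}$ converges to $x_j^i$ in $E_i$ for every $j$. Iterating the inclusion $Y_s(\cdot)\stackrel{1}{\hookrightarrow}\ell_{\infty}(\cdot)$ $k$ times, the convergence in $Y_1(\cdots Y_k(F)\cdots)$ forces convergence in $F$ of every $(n_1,\ldots,n_k)$-coordinate of $T_i(x^{(m)})$ to $y_{n_1,\ldots,n_k}$. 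On the other hand, the continuity of $A$ together with the coordinatewise convergence just established implies that each such coordinate of $T_i(x^{(m)})$ converges in $F$ to the $(n_1,\ldots,n_k)$-coordinate of $T_i((x_j^i)_{j=1}^{\infty})$. By uniqueness of limits in $F$, these coincide, so $\mathbf{y}=T_i((x_j^i)_{j=1}^{\infty})$ and the graph is closed. This gives separate continuity of $\widehat{A}_{B_{\mathcal{I}}}$, and hence joint continuity. The main obstacle I foresee is purely notational: organizing the $k$ nested layers of sequence-class inclusions so as to extract the individual coordinatewise convergence needed to match up $\mathbf{y}$ with $T_i((x_j^i)_{j=1}^{\infty})$; once this bookkeeping is done, the argument reduces to a clean closed-graph plus uniform-boundedness routine.
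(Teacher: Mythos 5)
Your proof is correct, and the heart of it — closing the graph by extracting coordinatewise convergence from the embeddings $X_i(\cdot)\stackrel{1}{\hookrightarrow}\ell_\infty(\cdot)$ and the iterated $Y_s(\cdot)\stackrel{1}{\hookrightarrow}\ell_\infty(\cdot)$, then matching limits via the continuity of $A$ — is exactly the limit-identification argument the paper uses. The difference is in how the closed-graph machinery is packaged: the paper invokes the closed graph theorem for multilinear mappings (Fernández) directly, letting all $d$ arguments converge simultaneously and identifying the limit of $\widehat{A}_{B_{\mathcal{I}}}(\overline{x}_l^1,\ldots,\overline{x}_l^d)$ in one stroke, whereas you apply the ordinary linear closed graph theorem in each variable separately and then upgrade separate continuity to joint continuity by the classical uniform-boundedness argument for multilinear maps between Banach spaces. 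Your route is more self-contained (it needs only standard linear functional analysis rather than the multilinear closed graph theorem from the literature) at the cost of the extra separate-to-joint step; the paper's route is shorter once the multilinear closed graph theorem is granted. One small point worth making explicit in a final write-up: after the coordinatewise identification you conclude $\mathbf{y}=T_i((x_j^i)_{j=1}^{\infty})$ because elements of $Y_1(\cdots Y_k(F)\cdots)$ are determined by their $F$-valued multi-coordinates — this is implicit in your "uniqueness of limits" sentence and is the same implicit step the paper takes.
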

\begin{proof} To prove the nontrivial implication, assume that $A$ is $(B_{\mathcal{I}};X_1,\ldots, X_d;$ $Y_1,\ldots,Y_k)$-summing. Of course the induced map $\widehat{A}_{B_{\mathcal{I}}}$ is well defined. We omit the proof of its $d$-linearity.  
We prove its continuity by applying the closed graph theorem for multilinear operators 
(see \cite{fernandez_closed_1996}). Let $\overline{x}_l^r = (x_{l,j}^r)_{j=1}^{\infty}$ and $\overline{x}^r = (x_j^r)_{j=1}^{\infty}$ be sequences in $X(E_r)$, $r\in \{ 1,\ldots, d\}$ and $l\in\mathbb{N}$, such that $\overline{x}_l^r\stackrel{l}{\longrightarrow} \overline{x}^r$ in $X_r(E_r)$, $r=1,\ldots, d$, and
	\begin{align}
		\widehat{A}_{B_{\mathcal{I}}}(\overline{x}_l^1,\ldots, \overline{x}_l^d) \stackrel{l}{\longrightarrow} \left( \cdots \left( z_{n_1,\ldots,n_k}\right)_{n_k=1}^{\infty} \cdots\right)_{n_1=1}^{\infty} \mbox{ in } Y_1(\cdots Y_k(F)\cdots).
		\label{eq3}
	\end{align}
The condition $X_r(\cdot)\stackrel{1}{\hookrightarrow} \ell_{\infty}(\cdot)$ guarantees that 
$x_{l,j}^r\stackrel{l}{\longrightarrow} x_j^r$ for every $j\in\mathbb{N}$. Since $A$ is continuous, $A\left( x_{l,j^1}^1,\ldots,x_{l,j^d}^d\right) \stackrel{l}{\longrightarrow} A\left( x_{j^1}^1,\ldots,x_{j^d}^d\right)$ in $F$ for any $j^1,\ldots,j^d\in\mathbb{N}$. In particular, as $j_{n_s}^r\in \mathbb{N}$ for all $s\in \{1,\ldots,k\}$ and $r \in I_s$,
	\begin{align}
		A\left( \sum_{s=1}^k\sum_{r\in I_s} x_{l,j_{n_s}^r}^r * e_r\right) \stackrel{l}{\longrightarrow}A\left( \sum_{s=1}^k\sum_{r\in I_s} x_{j_{n_s}^r}^r * e_r\right)  \mbox{ in } F	,
		\label{eq4}
	\end{align}
	for all $n_1,\ldots,n_k\in\mathbb{N}$. And since $\widehat{A}_{B_{\mathcal{I}}}(\overline{x}_l^1,\ldots, \overline{x}_l^d)  =$ $$\widehat{A}_{B_{\mathcal{I}}}((x_{l,j}^1)_{j=1}^{\infty},\ldots, (x_{l,j}^d)_{j=1}^{\infty}) = \left(\cdots\left( A\left( \sum_{s=1}^k\sum_{r\in I_s} x_{l,j_{n_s}^r}^r * e_r\right) \right)_{n_k=1}^{\infty}\cdots\right)_{n_1=1}^{\infty},$$
from (\ref{eq3}) it follows that
	\begin{align*}
		 \left(\cdots\left( A\left( \sum_{s=1}^k\sum_{r\in I_s} x_{l,j_{n_s}^r}^r * e_r\right) \right)_{n_k=1}^{\infty}\cdots\right)_{n_1=1}^{\infty} &\stackrel{l}{\longrightarrow} \left( \cdots \left( z_{n_1,\ldots,n_k}\right)_{n_k=1}^{\infty} \cdots\right)_{n_1=1}^{\infty}
	\end{align*}
 in $Y_1(\cdots Y_k(F)\cdots)$. Therefore, for all $n_1,\ldots,n_k\in\mathbb{N}$, 
	\begin{align}
		 A\left( \sum_{s=1}^k\sum_{r\in I_s} x_{l,j_{n_s}^r}^r * e_r\right)  \stackrel{l}{\longrightarrow} z_{n_1,\ldots,n_k} \mbox{ in } F.
		 \label{eq5}
	\end{align}
	From (\ref{eq4}) and (\ref{eq5}) we conclude that 
$
 \widehat{A}_{B_{\mathcal{I}}}(\overline{x}^1,\ldots,\overline{x}^d) = \left( \cdots \left( z_{n_1,\ldots,n_k}\right)_{n_k=1}^{\infty} \cdots\right)_{n_1=1}^{\infty}$,
proving that $\widehat{A}_{B_{\mathcal{I}}}$ is continuous.
\end{proof}


 For an operator $A\in\mathcal{L}_{X_1,\ldots, X_d; Y_1,\ldots,Y_k}^{B_{\mathcal{I}}}(E_1,\ldots , E_d;F)$, we define
    \begin{align}
        \| A\|_{B_{\mathcal{I}}; X_1,\ldots, X_d; Y_1,\ldots,Y_k} = \| \widehat{A}_{B_{\mathcal{I}}} \|.
        \label{eq_def_norma}
    \end{align}

Our next purpose is to prove that $\mathcal{L}_{X_1,\ldots, X_d; Y_1,\ldots,Y_k}^{B_{\mathcal{I}}}$ endowed with $\|\cdot\|_{B_{\mathcal{I}}; X_1,\ldots, X_d; Y_1,\ldots,Y_k}$ is a Banach multi-ideal in the sense of Pietsch  \cite{pietsch_ideals_1984} (see also \cite{fg}). We recall the definition: a subclass $\cal M$ of the class of multilinear operators between Banach spaces endowed with a function $\|\cdot\|_{\cal M} \colon {\cal M} \longrightarrow \mathbb{R}$ is a Banach multi-ideal if, for all $d \in \mathbb{N}$ and Banach spaces $E_1,\ldots,E_d,F$, the component  $$\mathcal{M}(E_1,\ldots,E_d;F) := \mathcal{L}(E_1,\ldots,E_d;F)\cap \mathcal{M}$$ enjoys the following:\\
$\bullet$ $\mathcal{M}(E_1,\ldots,E_d;F)$ is a linear subspace of  $\mathcal{L}(E_1,\ldots,E_d;F)$ containing the operators of the type 
    \begin{align} \label{finitetype}
        \varphi_1\otimes\cdots\otimes\varphi_d\otimes b\colon E_1\times \cdots\times E_d &\longrightarrow F ~,~(x^1,\ldots,x^d) \mapsto \varphi_1(x^1) \cdots \varphi_d(x^d) \cdot b,
    \end{align}
where $\varphi_m\in E_m^*$, $m=1,\ldots, d$, and $b\in F$; and, moreover,
\begin{equation}\label{opid}\|I^d\colon \mathbb{K}^d\longrightarrow\mathbb{K}~,~ I^d(\lambda^1,\ldots,\lambda^d) = \lambda^1\cdots\lambda^d\|_{\mathcal{M}} = 1.
\end{equation}
$\bullet$ $(\mathcal{M}(E_1,\ldots,E_d;F), \|\cdot\|_{\cal M})$ is a Banach space.\\
$\bullet$ The multi-ideal property: If $A\in \mathcal{M}(E_1,\ldots,E_d;F)$, $u_r\in \mathcal{L}(G_r;E_r)$, $r=1,\ldots,d$, and $t\in \mathcal{L}(F;H)$, then $t\circ A\circ (u_1,\ldots,u_d)\in \mathcal{M}(G_1,\ldots,G_d;H)$ and
\begin{align*}
        \| t\circ A\circ (u_1,\ldots,u_d)\|_{\mathcal{M}} \leq \| t\|\cdot \| A\|_{\mathcal{M}}\cdot\| u_1\|\cdots\| u_d\|.
    \end{align*}

We omit the proof that  $\mathcal{L}_{X_1,\ldots,X_d;Y_1,\ldots,Y_k}^{B_{\mathcal{I}}}$ is a linear subspace. 
We start by proving the multi-ideal property.

\begin{proposition}
\label{prop_propriedade_ideal2}
    If the sequence classes $X_1,\ldots, X_d, Y_1,\ldots,Y_k$ are linearly stable, then $(\mathcal{L}_{X_1,\ldots,X_d;Y_1,\ldots,Y_k}^{B_{\mathcal{I}}},$ ~$ \| \cdot\|_{B_{\mathcal{I}}; X_1,\ldots, X_d; Y_1,\ldots,Y_k})$ enjoys the multi-ideal property.
%
\end{proposition}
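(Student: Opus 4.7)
The plan is to verify the multi-ideal property by factoring the induced map of $B:=t\circ A\circ (u_1,\ldots,u_d)$ through the induced maps of the $u_r$, the induced map $\widehat{A}_{B_{\mathcal{I}}}$, and an iterated ``pushforward'' of $t$ through the nested sequence classes $Y_1(\cdots Y_k(\cdot)\cdots)$. All three ingredients produce the correct norm estimate, and their composition gives the desired bound.

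First I would fix sequences $(x_j^r)_{j=1}^{\infty}\in X_r(G_r)$, $r=1,\ldots,d$. Linear stability of each $X_r$ yields $(u_r(x_j^r))_{j=1}^{\infty}\in X_r(E_r)$ with $\|(u_r(x_j^r))_{j=1}^{\infty}\|_{X_r(E_r)}\leq \|u_r\|\cdot \|(x_j^r)_{j=1}^{\infty}\|_{X_r(G_r)}$. Since $A$ is $(B_{\mathcal{I}};X_1,\ldots,X_d;Y_1,\ldots,Y_k)$-summing, Proposition \ref{proposicao_operador_induzido} guarantees that
\begin{align*}
\left(\cdots\left(A\left(\sum_{s=1}^k\sum_{r\in I_s} u_r(x_{j_{n_s}^r}^r)*e_r\right)\right)_{n_k=1}^{\infty}\cdots\right)_{n_1=1}^{\infty}\in Y_1(\cdots Y_k(F)\cdots),
\end{align*}
and that the corresponding map is continuous and $d$-linear with norm $\|\widehat{A}_{B_{\mathcal{I}}}\|=\|A\|_{B_{\mathcal{I}};X_1,\ldots,X_d;Y_1,\ldots,Y_k}$. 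Using the $d$-linearity of $A$ to rewrite each term as $A(u_1(x_{j_{m_1}^1}^1),\ldots,u_d(x_{j_{m_d}^d}^d))$, where the indices $m_r=n_s$ for $r\in I_s$, one sees that replacing $A$ by $t\circ A$ amounts to applying $t$ entrywise to the innermost entries of the above iterated sequence.

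Next I would push $t$ through the nested sequence classes by iterating linear stability. Set $\tau_0:=t\in\mathcal{L}(F;H)$ and, inductively for $i=1,\ldots,k$, let $\tau_i:=\widehat{\tau_{i-1}}\colon Y_{k-i+1}(\cdots Y_k(F)\cdots)\longrightarrow Y_{k-i+1}(\cdots Y_k(H)\cdots)$. Linear stability of $Y_{k-i+1}$ gives that $\tau_i$ is a well-defined bounded linear operator with $\|\tau_i\|\leq \|\tau_{i-1}\|$, so a simple induction yields $\|\tau_k\|\leq \|t\|$. Composing everything,
\begin{align*}
\widehat{B}_{B_{\mathcal{I}}}\!\left((x_j^1)_{j=1}^{\infty},\ldots,(x_j^d)_{j=1}^{\infty}\right)=\tau_k\!\left(\widehat{A}_{B_{\mathcal{I}}}\!\left(\widehat{u_1}\bigl((x_j^1)_{j=1}^{\infty}\bigr),\ldots,\widehat{u_d}\bigl((x_j^d)_{j=1}^{\infty}\bigr)\right)\right)
\end{align*}
belongs to $Y_1(\cdots Y_k(H)\cdots)$, which proves that $B\in\mathcal{L}^{B_{\mathcal{I}}}_{X_1,\ldots,X_d;Y_1,\ldots,Y_k}(G_1,\ldots,G_d;H)$. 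Taking the supremum over unit balls gives
\begin{align*}
\|B\|_{B_{\mathcal{I}};X_1,\ldots,X_d;Y_1,\ldots,Y_k}\leq \|\tau_k\|\cdot\|\widehat{A}_{B_{\mathcal{I}}}\|\cdot\|\widehat{u_1}\|\cdots\|\widehat{u_d}\|\leq \|t\|\cdot\|A\|_{B_{\mathcal{I}};X_1,\ldots,X_d;Y_1,\ldots,Y_k}\cdot\|u_1\|\cdots\|u_d\|.
\end{align*}

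The main obstacle I anticipate is purely notational rather than conceptual: one must carefully match the combinatorial identity $\sum_{s=1}^k\sum_{r\in I_s}u_r(x_{j_{n_s}^r}^r)*e_r=(u_1,\ldots,u_d)\!\left(\sum_{s=1}^k\sum_{r\in I_s}x_{j_{n_s}^r}^r*e_r\right)$ to the nested $k$-fold structure, and verify that iterating linear stability does not cause the constant to accumulate but stays controlled by $\|t\|$. Once this bookkeeping is in place, the factorization makes the norm inequality immediate.
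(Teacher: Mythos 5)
Your proposal is correct and follows essentially the same route as the paper: linear stability of the $X_r$ handles precomposition via $\widehat{A}_{B_{\mathcal{I}}}\circ(\widehat{u_1},\ldots,\widehat{u_d})$, and linear stability of $Y_1,\ldots,Y_k$ is iterated to push $t$ through the nested classes (your $\tau_k$ is the paper's $\widehat{v}^k$), yielding the same factorization and norm estimate. The only cosmetic difference is that the paper treats the two compositions in separate steps and records $\|\widehat{v}^k\|=\|v\|$ as an equality, while your inequality $\|\tau_k\|\leq\|t\|$ suffices.
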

\begin{proof} Let $A\in\mathcal{L}_{X_1,\ldots,X_d;Y_1,\ldots,Y_k}^{B_{\mathcal{I}}}(E_1,\ldots,E_d;F)$, $u_r\in\mathcal{L}(G_r;E_r)$, $r=1,\ldots,d$, and $v\in\mathcal{L}(F;H)$ be given.
     For sequences  $(x_j^r)_{j=1}^{\infty}\in X_r(G_r)$, $r\in \{1,\ldots,d\}$, the linear stability of $X_r$ gives $(u_r(x_j^r))_{j=1}^{\infty}\in X_r(E_r)$, and since $A$ is $(B_{\mathcal{I}};X_1,\ldots, X_d;$ $Y_1,\ldots,Y_k)$-summing, 
     \begin{align*}
         Y_1(\cdots Y_k(F)\cdots) \ni& \widehat{A}_{B_{\mathcal{I}}}\left( (u_1(x_j^1))_{j=1}^{\infty},\ldots, (u_d(x_j^d))_{j=1}^{\infty}\right) =\\
         &=  \left(\cdots\left(  A\left( \sum_{s=1}^k\sum_{r\in I_s} u_r(x_{j_{n_s}^r}^r) * e_r\right) \right)_{n_k=1}^{\infty}\cdots\right)_{n_1=1}^{\infty} \\
         &= \left(\cdots\left(  A \circ (u_1,\ldots,u_d)\left( \sum_{s=1}^k\sum_{r\in I_s}x_{j_{n_s}^r}^r * e_r\right) \right)_{n_k=1}^{\infty}\cdots\right)_{n_1=1}^{\infty}.
    \end{align*}
 This proves that $A \circ (u_1,\ldots,u_d) \in \mathcal{L}_{X_1,\ldots,X_d;Y_1,\ldots,Y_k}^{B_{\mathcal{I}}}(G_1,\ldots,G_d;F)$ and that
    \begin{align*}
        \left( A \circ (u_1,\ldots,u_d)\right)^{\widehat{}}_{B_{\mathcal{I}}} = \widehat{A}_{B_{\mathcal{I}}}\circ (\widehat{u_1},\ldots, \widehat{u_d}),
    \end{align*}
    where $\widehat{u}_r \colon X_r(E_r)\longrightarrow X(F)$ is the linear operator induced by $u_r$.

Using the linear stability of $Y_1,\ldots, Y_k$ 
we have that the linear operator $\widehat{v}\colon Y_k(F)\longrightarrow Y_k(H)$ induced by $v$ is linear, continuous and $\|\widehat{v} \| = \|v\|$. The same happens with $\widehat{v}^2: =\widehat{\widehat{v}}\colon Y_{k-1}(Y_k(F))\longrightarrow Y_{k-1}(Y_k(H))$ and with its successive induced operators, until we reach the induced operator $\widehat{v}^k\colon Y_1(\cdots Y_k(F)\cdots)\longrightarrow Y_1(\cdots Y_k(H)\cdots)$,
    \begin{equation*}\widehat{v}^k\left(\cdots\left( y_{j_1, \ldots, j_k}\right)_{j_k = 1}^\infty \cdots \right)_{j_1= 1}^\infty = \left(\cdots\left( v(y_{j_1, \ldots, j_k})\right)_{j_k = 1}^\infty \cdots \right)_{j_1= 1}^\infty,\end{equation*}
which is linear continuous and $\|\widehat{v}^k\| = \|\widehat{v}^{k-1}\| = \cdots = \|\widehat{v}^2\| = \|v\|$. 
So, for sequences $(x_j^r)_{j=1}^{\infty}\in X_r(E_r)$, $r=1,\ldots,d$, since  $A\in\mathcal{L}_{X_1,\ldots,X_d;Y_1,\ldots,Y_k}^{B_{\mathcal{I}}}(E_1,\ldots,E_d;F)$, we have
    \begin{align*}
         \widehat{A}_{B_{\mathcal{I}}} \left( (x_j^1)_{j=1}^{\infty},\ldots,(x_j^d)_{j=1}^{\infty} \right) \in Y_1(\cdots Y_k(F)\cdots),
    \end{align*}
therefore,
    \begin{align*}
        Y_1(\cdots Y_k(H)\cdots)&\ni \widehat{v}^k\circ \widehat{A}_{B_{\mathcal{I}}} \left( (x_j^1)_{j=1}^{\infty},\ldots,(x_j^d)_{j=1}^{\infty} \right) \\
        &=\left(\ldots\left(  v\circ A\left( \sum_{s=1}^k\sum_{r\in I_s}x_{j_{n_s}^r}^r * e_r\right) \right)_{n_k=1}^{\infty}\ldots\right)_{n_1=1}^{\infty}.
    \end{align*}
proving that $v\circ A\in \mathcal{L}_{X_1,\ldots,X_d;Y_1,\ldots,Y_k}^{B_{\mathcal{I}}}(E_1,\ldots,E_d;H)$ and that $(v\circ A)_{B_{\mathcal{I}}}^{\widehat{}} = \widehat{v}^k\circ \widehat{A}_{B_{\mathcal{I}}}$. It follows that  
    \begin{align*}
        v\circ A\circ (u_1,\ldots,u_d) \in \mathcal{L}_{X_1,\ldots,X_d;Y_1,\ldots,Y_k}^{B_{\mathcal{I}}}(G_1,\ldots,G_d;H)
    \end{align*}
and 
    \begin{align*}
        \|  v\circ & A\circ (u_1,\ldots,u_d)\|_{B_{\mathcal{I}}; X_1,\ldots, X_d; Y_1,\ldots,Y_k} = \| \left( v\circ A\circ (u_1,\ldots,u_d)\right)_{B_{\mathcal{I}}}^{\widehat{}} \| \\
        &= \| \widehat{v}^k \circ \widehat{A}_{B_{\mathcal{I}}} \circ (\widehat{u_1},\ldots,\widehat{u_d})  \| \leq \| \widehat{v}^k\| \cdot\|  \widehat{A}_{B_{\mathcal{I}}} \| \cdot\prod_{r=1}^{d}\| \widehat{u}_r \| = \| v\| \cdot\|  A \|_{B_{\mathcal{I}}; X_1,\ldots, X_d; Y_1,\ldots,Y_k} \cdot\prod_{r=1}^{d}\| u_r \|.
    \end{align*}
\end{proof}

Next we identify a  H\"older-type condition which is necessary for $\mathcal{L}_{X_1,\ldots,X_d;Y_1,\ldots,Y_k}^{B_{\mathcal{I}}} \neq \{0\}$ and sufficient for $\mathcal{L}_{X_1,\ldots,X_d;Y_1,\ldots,Y_k}^{B_{\mathcal{I}}}$ to contain the operators of the type (\ref{finitetype}).

\begin{proposition}
\label{prop_ideal_nao_trivial_implica_I^d_no_ideal}
     Suppose that the sequence classes $X_1,\ldots,X_d,Y_1,\ldots,Y_k$ are linearly stable.\\
{\rm (a)} If there exists a nonzero operator $A\in\mathcal{L}_{X_1,\ldots, X_d; Y_1,\ldots,Y_k}^{B_{\mathcal{I}}}(E_1,\ldots , E_d;F)$, then
    \begin{align}
        \left(\cdots \left( \prod_{s=1}^k\prod_{r\in I_s} \lambda_{j_{n_s}^r}^r\right)_{n_k=1}^{\infty}\cdots \right)_{n_1=1}^{\infty}\in Y_1(\cdots Y_k(\mathbb{K})\cdots)\label{caracterizacao_operador_I^d}
    \end{align}
    whenever $(\lambda_j^r)_{j=1}^{\infty}\in X_r(\mathbb{K})$, $r=1,\ldots,d$.\\
{\rm (b)} If {\rm (\ref{caracterizacao_operador_I^d})} holds, then $\mathcal{L}_{X_1,\ldots, X_d; Y_1,\ldots,Y_k}^{B_{\mathcal{I}}}(E_1,\ldots , E_d;F)$ contains the operators of the type {\rm (\ref{finitetype})}.

\end{proposition}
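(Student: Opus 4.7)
The plan is to reduce both implications to a direct computation based on the $d$-linearity of the operator, using linear stability of the sequence classes to move between scalar and vector settings.

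For part (a), I would exploit the nonzero operator to extract the scalar summability. Given a nonzero $A \in \mathcal{L}_{X_1,\ldots, X_d; Y_1,\ldots,Y_k}^{B_{\mathcal{I}}}(E_1,\ldots , E_d;F)$, choose $(a^1,\ldots,a^d) \in E_1 \times \cdots \times E_d$ with $A(a^1,\ldots,a^d)\neq 0$ and, by Hahn--Banach, a functional $\psi \in F^*$ with $\alpha := \psi(A(a^1,\ldots,a^d)) \neq 0$. Given scalar sequences $(\lambda_j^r)_{j=1}^{\infty}\in X_r(\mathbb{K})$, apply the linear stability of $X_r$ to the continuous linear map $\mathbb{K} \to E_r$, $\lambda \mapsto \lambda a^r$, obtaining $(\lambda_j^r a^r)_{j=1}^{\infty} \in X_r(E_r)$. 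Feed these sequences into $A$ and use $d$-linearity to pull out the scalars:
\begin{align*}
A\Bigl(\sum_{s=1}^k\sum_{r\in I_s}\lambda_{j_{n_s}^r}^r a^r * e_r\Bigr) = \Bigl(\prod_{s=1}^k\prod_{r\in I_s}\lambda_{j_{n_s}^r}^r\Bigr)\cdot A(a^1,\ldots,a^d).
\end{align*}
Since $A$ is $(B_{\mathcal{I}};X_1,\ldots,X_d;Y_1,\ldots,Y_k)$-summing, the resulting iterated sequence lies in $Y_1(\cdots Y_k(F)\cdots)$. Then apply the iterated induced map $\widehat{\psi}^k \colon Y_1(\cdots Y_k(F)\cdots) \to Y_1(\cdots Y_k(\mathbb{K})\cdots)$, whose continuity is ensured by successive linear stability of $Y_1,\ldots,Y_k$ (exactly as in the proof of Proposition \ref{prop_propriedade_ideal2}), and divide by the nonzero scalar $\alpha$ to conclude (\ref{caracterizacao_operador_I^d}).

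For part (b), I would argue in the reverse direction. Given $A = \varphi_1\otimes\cdots\otimes\varphi_d\otimes b$ and sequences $(x_j^r)_{j=1}^{\infty}\in X_r(E_r)$, the linear stability of each $X_r$ yields $(\varphi_r(x_j^r))_{j=1}^{\infty}\in X_r(\mathbb{K})$. Applying the hypothesis (\ref{caracterizacao_operador_I^d}) to these scalar sequences gives
\begin{align*}
\Bigl(\cdots \Bigl(\prod_{s=1}^k\prod_{r\in I_s}\varphi_r(x_{j_{n_s}^r}^r)\Bigr)_{n_k=1}^{\infty}\cdots\Bigr)_{n_1=1}^{\infty} \in Y_1(\cdots Y_k(\mathbb{K})\cdots).
\end{align*}
Applying the iterated induced operator of the continuous linear map $\mathbb{K} \to F$, $\lambda \mapsto \lambda b$, which is continuous by iterated linear stability of $Y_1,\ldots,Y_k$, the resulting sequence sits in $Y_1(\cdots Y_k(F)\cdots)$; by definition of $A$ this sequence equals
\begin{align*}
\Bigl(\cdots \Bigl(A\Bigl(\sum_{s=1}^k\sum_{r\in I_s}x_{j_{n_s}^r}^r *e_r\Bigr)\Bigr)_{n_k=1}^{\infty}\cdots\Bigr)_{n_1=1}^{\infty},
\end{align*}
so $A$ is $(B_{\mathcal{I}};X_1,\ldots,X_d;Y_1,\ldots,Y_k)$-summing.

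I do not foresee any serious obstacle; everything reduces to $d$-linearity plus the iterated induced-operator construction already established. The main bookkeeping concern is keeping the multi-index structure of $Y_1(\cdots Y_k(\cdot)\cdots)$ straight when applying the functional $\psi$ in (a) or multiplying by $b$ in (b); both are handled uniformly by the chain $\widehat{\cdot}^k$ of induced operators built in Proposition \ref{prop_propriedade_ideal2}.
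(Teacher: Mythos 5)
Your proposal is correct and follows essentially the same route as the paper: Hahn--Banach plus composition with the rank-one maps $\lambda\mapsto\lambda a^r$ and the iterated induced operator $\widehat{\psi}^k$ for (a), and the scalar hypothesis transported by $\lambda\mapsto\lambda b$ for (b). The only cosmetic difference is that the paper channels both parts through the multi-ideal property (Proposition \ref{prop_propriedade_ideal2}) and the operator $I^d$, whereas you unroll those same compositions directly; the underlying computation is identical.
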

\begin{proof}
    (a) Let $(x^1,\ldots,x^d)\in E_1\times \cdots\times E_d$ be such that $A(x^1,\ldots,x^d)\neq 0$. By the Hahn-Banach Theorem there exists  $\psi \in F^*$ such that $\psi(A(x^1,\ldots,x^d)) = \| A(x^1,\ldots,x^d)\|$. Taking successive induced operators as in the proof of 
    Proposition \ref{prop_propriedade_ideal2}(2), we end up with a bounded linear operator 
    $\widehat{\psi}^k\colon Y_1(\cdots Y_k(F)\cdots) \longrightarrow Y_1(\cdots Y_k(\mathbb{K})\cdots)$ given by
    $$\widehat{\psi}^k\left(\cdots\left( y_{j_1, \ldots, j_k}\right)_{j_k = 1}^\infty \cdots \right)_{j_1= 1}^\infty = \left(\cdots\left( \psi(y_{j_1, \ldots, j_k})\right)_{j_k = 1}^\infty \cdots \right)_{j_1= 1}^\infty.$$
For $r=1,\ldots,d$, let $u_r\in\mathcal{L}(\mathbb{K}; E_r)$ be defined by $u_r(\lambda) = \lambda x^r$, and consider the induced bounded linear operator 
$\widehat{u}_r\in \mathcal{L}(X(\mathbb{K}), X(E_r))$. 
By Proposition \ref{prop_propriedade_ideal2} it follows that $\psi \circ A\circ (u_1,\ldots,u_d)\in \mathcal{L}_{X_1,\ldots,X_d;Y_1,\ldots,Y_k}^{B_{\mathcal{I}}}(^d\mathbb{K};\mathbb{K})$. So, for $(\lambda_j^r)_{j=1}^{\infty}\in X_r(\mathbb{K})$, $r = 1, \ldots, d$,  \begin{align*}
Y_1(\cdots Y_k(\mathbb{K})\cdots) &\ni\left(\psi \circ A\circ (u_1,\ldots,u_d)\right)^{\widehat{}}_{B_{\mathcal{I}}} \left((\lambda_j^1)_{j=1}^{\infty},\ldots,(\lambda_j^d)_{j=1}^{\infty}\right) \\
         &=\| A(x^1,\ldots,x^d) \| \cdot \left(\ldots \left( \prod_{s=1}^k\prod_{r\in I_s} \lambda_{j_{n_s}^r}^r  \right)_{n_k=1}^{\infty} \ldots \right)_{n_1=1}^{\infty},
    \end{align*}
    where the equality results from a boring computation. Now use that $Y_1(\cdots Y_k(\mathbb{K})\cdots)$ is a linear space,

\noindent (b) Let $I^d\colon \mathbb{K}^d\longrightarrow\mathbb{K}$ be the operator of condition (\ref{opid}). Given scalar sequences $(\lambda_j^r)_{j=1}^{\infty}\in X_r(\mathbb{K})$, $r=1,\ldots,d$, since
\begin{align*}
        I^d\left( \sum_{s=1}^k\sum_{r\in I_s} \lambda_{j_{n_s}^r}^r*e_r\right)  = I^d\left( \sum_{r\in I_1} \lambda_{j_{n_1}^r}^r*e_r+\cdots+\sum_{r\in I_k} \lambda_{j_{n_k}^r}^r*e_r\right)
        = \prod_{s=1}^k\prod_{r\in I_s} \lambda_{j_{n_s}^r}^r
    \end{align*}
for all $n_1,\ldots,n_k\in\mathbb{N}$, the assumption (\ref{caracterizacao_operador_I^d}) gives that
    \begin{equation}\label{Desigid}\left(\cdots \left( I^d\left( \sum_{s=1}^k\sum_{r\in I_s} \lambda_{j_{n_s}^r}^r*e_r\right) \right)_{n_k=1}^{\infty}\cdots \right)_{n_1=1}^{\infty}  = \left(\cdots \left( \prod_{s=1}^k\prod_{r\in I_s} \lambda_{j_{n_s}^r}^r\right)_{n_k=1}^{\infty}\cdots \right)_{n_1=1}^{\infty}
    \end{equation}
belongs to $Y_1(\cdots Y_k(\mathbb{K})\cdots)$, proving that $I^d$ is $(B_{\mathcal{I}};X_1,\ldots,X_d;Y_1,\ldots,Y_k)$-summing.

For $\varphi_r\in E_r^*$, $r=1,\ldots,d$, and $b\in F$, considering the operator $u \colon\mathbb{K}\longrightarrow F$ given by $u(\lambda) = \lambda b$, by Proposition \ref{prop_propriedade_ideal2} we get  
      $$\varphi_1\otimes\cdots\otimes\varphi_d\otimes b = u\circ I^d\circ (\varphi_1,\ldots,\varphi_d) \in \mathcal{L}_{X_1,\ldots,X_d;Y_1,\ldots,Y_k}^{B_{\mathcal{I}}}(E_1,\ldots,E_d;F).$$
\end{proof}

Now it is quite natural to suppose the H\"older-type condition 
(\ref{caracterizacao_operador_I^d}) together with the corresponding norm inequality. 


\begin{definition}\rm
     Let $X_1,\ldots,X_d,Y_1,\ldots,Y_k$ be sequence classes. We say that the $(d+k)$-tuple $(X_1,\ldots,X_d,Y_1,\ldots,Y_k)$ is {\it $B_{\mathcal{I}}$-compatível} if, regardless of the scalar sequences  $(\lambda_j^r)_{j=1}^{\infty}\in X_r(\mathbb{K})$, $r=1,\ldots,d$, it holds
    \begin{align}
        \left(\cdots\left(\prod_{s=1}^k\prod_{r\in I_s} \lambda_{j_{n_s}^r}^r \right)_{n_k=1}^{\infty}\cdots\right)_{n_1=1}^{\infty} \in Y_1(\cdots(Y_k(\mathbb{K})\cdots) \mbox{~~and}
        \label{condicao_compatibilidade1}
    \end{align}
    \begin{align}
        \Big \|\left(\cdots\left(\prod_{s=1}^k\prod_{r\in I_s} \lambda_{j_{n_s}^r}^r \right)_{n_k=1}^{\infty}\cdots\right)_{n_1=1}^{\infty}\Big \|_{Y_1(\cdots(Y_k(\mathbb{K})\cdots)}\leq \prod_{r=1}^d\| (\lambda_j^r)_{j=1}^{\infty}\|_{X_r(\mathbb{K})}.
        \label{condicao_compatibilidade2}
    \end{align}

\end{definition}

In some cases the compatibility condition sounds familiar:

\begin{example}\rm
     For $q,p_1, \ldots, p_d \geq 1$, H\"older's inequality asserts that  $\frac{1}{q} \leq \frac{1}{p_1}+\cdots+\frac{1}{p_d}$  if and only if the  $(d+1)$-tuple $(\ell_{p_1}(\cdot), \ldots, \ell_{p_d}(\cdot); \ell_q(\cdot))$ is $\mathbb{N}^d$-compatible.
\end{example}

For further examples, see Propositions \ref{cap3-prop20}, \ref{cap3-prop21} and \ref{cap3-prop22}.

In the next lemma we need to regard a vector $(x^1, \ldots, x^d)\in E_1\times \ldots\times E_d$  by means of the partition $\mathcal{I} = \{ I_1,\ldots,I_k\}$ in the following way: 
    \begin{align}
        \sum_{s=1}^k\sum\limits_{r\in I_s}x^r *e_r &= \sum\limits_{r\in I_1}x^r *e_r + \cdots+ \sum\limits_{r\in I_k}x^r *e_r = (x^1,\ldots,x^d).\label{uytr}
    \end{align}

\begin{lemma}
\label{proposicao_desigualdade_normas}
     If the sequence classes $X_1,\ldots,X_d,Y_1,\ldots,Y_k$ are linearly stable and $A\in\mathcal{L}_{X_1,\ldots,X_d;Y_1,\ldots,Y_k}^{B_{\mathcal{I}}}(E_1,\ldots,E_d;F)$, then  $
        \| A\| \leq \| A\|_{B_{\mathcal{I}}; X_1,\ldots, X_d; Y_1,\ldots,Y_k}.$
\end{lemma}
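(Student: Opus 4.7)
The plan is to feed $\widehat{A}_{B_{\mathcal{I}}}$ a tuple of maximally sparse sequences, cooked up so that the resulting output in $Y_1(\cdots Y_k(F)\cdots)$ has a single nonzero entry, equal to $A(x^1,\ldots,x^d)$, and then to read off the bound from (\ref{eq_def_norma}).

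Concretely, given $(x^1,\ldots,x^d)\in E_1\times\cdots\times E_d$, for each $r=1,\ldots,d$ I would consider the eventually null sequence $\overline{x}^r := x^r\cdot e_{j_1^r}\in c_{00}(E_r)\subseteq X_r(E_r)$, i.e., the sequence whose $j_1^r$-th coordinate is $x^r$ and whose other coordinates vanish. To compute $\|\overline{x}^r\|_{X_r(E_r)}$, I would apply linear stability of $X_r$ to the operator $u_r\colon\mathbb{K}\longrightarrow E_r$, $\lambda\mapsto \lambda x^r$, using $\|u_r\|=\|x^r\|$ and $\|e_{j_1^r}\|_{X_r(\mathbb{K})}=1$ to get $\|\overline{x}^r\|_{X_r(E_r)}\leq \|x^r\|$, while the embedding $X_r(E_r)\stackrel{1}{\hookrightarrow}\ell_\infty(E_r)$ gives the reverse inequality; hence $\|\overline{x}^r\|_{X_r(E_r)}=\|x^r\|$.

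Next, I would evaluate $\widehat{A}_{B_{\mathcal{I}}}(\overline{x}^1,\ldots,\overline{x}^d)$ coordinate by coordinate. At the position $(n_1,\ldots,n_k)=(1,\ldots,1)$ one has $j_{n_s}^r=j_1^r$ for every $s$ and every $r\in I_s$, so $\overline{x}_{j_{n_s}^r}^r=x^r$ and the corresponding entry is exactly $A(x^1,\ldots,x^d)$. For any $(n_1,\ldots,n_k)\neq(1,\ldots,1)$, injectivity of the correspondence (\ref{injetividade_bloco}) forces the associated $d$-tuple to differ from $\sum_{s}\sum_{r\in I_s}j_1^r* e_r$, so some coordinate $r$ (with $r\in I_s$) satisfies $j_{n_s}^r\neq j_1^r$; then $\overline{x}_{j_{n_s}^r}^r=0$ and the multilinearity of $A$ kills the whole entry. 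Hence $\widehat{A}_{B_{\mathcal{I}}}(\overline{x}^1,\ldots,\overline{x}^d)$ is the ``single corner'' sequence supported at $(1,\ldots,1)$ with value $A(x^1,\ldots,x^d)$.

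The remaining point, which I expect to be the only mildly delicate step, is to show that the norm of such a single-corner element of $Y_1(\cdots Y_k(F)\cdots)$ equals $\|A(x^1,\ldots,x^d)\|_F$. I would prove this by iterating the argument of the previous paragraph: at the innermost level, linear stability of $Y_k$ applied to $\lambda\mapsto \lambda A(x^1,\ldots,x^d)$ together with $\|e_1\|_{Y_k(\mathbb{K})}=1$ and $Y_k(F)\stackrel{1}{\hookrightarrow}\ell_\infty(F)$ yields $\|A(x^1,\ldots,x^d)\cdot e_1\|_{Y_k(F)}=\|A(x^1,\ldots,x^d)\|_F$; then linear stability of $Y_{k-1}$ applied to $\lambda\mapsto \lambda\bigl(A(x^1,\ldots,x^d)\cdot e_1\bigr)$ carries this identity one level up, and so on through $Y_{k-2},\ldots,Y_1$. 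Combining this with (\ref{eq_def_norma}) gives
\[
\|A(x^1,\ldots,x^d)\|_F \leq \|\widehat{A}_{B_{\mathcal{I}}}\|\prod_{r=1}^d \|\overline{x}^r\|_{X_r(E_r)} = \|A\|_{B_{\mathcal{I}};X_1,\ldots,X_d;Y_1,\ldots,Y_k}\prod_{r=1}^d \|x^r\|_{E_r},
\]
and taking the supremum over $(x^1,\ldots,x^d)\in B_{E_1}\times\cdots\times B_{E_d}$ yields the claimed inequality $\|A\|\leq \|A\|_{B_{\mathcal{I}};X_1,\ldots,X_d;Y_1,\ldots,Y_k}$.
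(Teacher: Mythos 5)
Your proof is correct and takes essentially the same route as the paper: it also feeds $\widehat{A}_{B_{\mathcal{I}}}$ the sequences $x^r\cdot e_{j_1^r}$, observes that every entry off the corner $(1,\ldots,1)$ vanishes, and identifies the norm of the resulting single-corner element of $Y_1(\cdots Y_k(F)\cdots)$ with $\|A(x^1,\ldots,x^d)\|_F$. The only cosmetic difference is that the paper invokes a lemma from an earlier work (\cite[Lemma 2.4]{botelho_transformation_2017}) for the identities $\|x\cdot e_j\|_{X(E)}=\|x\|_E$, which you re-derive directly from linear stability, $\|e_j\|_{X(\mathbb{K})}=1$ and the embedding into $\ell_\infty(\cdot)$.
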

\begin{proof}
     Given $(x^1,\ldots,x^d)\in E_1\times \cdots \times E_d$ and $r=1,\ldots,d$, consider the sequence $(y_j^r)_{j=1}^{\infty} =  x^r\cdot e_{j_1^r}$.  
So, $\sum\limits_{s=1}^k\sum\limits_{r\in I_s}y_{j_{n_s}^r}^r *e_r$ has some null coordinate if $n_s\neq 1$ for some $s\in \{1,\ldots,k\}$. If $n_1=\cdots = n_k = 1$, then $\sum\limits_{s=1}^k\sum\limits_{r\in I_s}y_{j_{n_s}^r}^r *e_r =  \sum\limits_{s=1}^k\sum\limits_{r\in I_s} x^r *e_r$. Thus, applying \cite[Lemma 2.4]{botelho_transformation_2017} repeated times, we get
    \begin{align*}
        \left\| \widehat{A}_{B_{\mathcal{I}}} \left((y_j^1)_{j=1}^{\infty}, \ldots, (y_j^d)_{j=1}^{\infty}\right)\right\|&_{Y_1(\cdots Y_k(F)\cdots)} =\\& = \Big\| \left( \cdots \left( A\left( \sum_{s=1}^k\sum_{r\in I_s} y_{j_{n_s}^r}^r *e_r\right) \right)_{n_k=1}^{\infty}\cdots \right)_{n_1=1}^{\infty}\Big\|_{Y_1(\cdots Y_k(F)\cdots)} \\
        &= \Big\| \left( \cdots \left( \left(  A\left( \sum_{s=1}^k\sum_{r\in I_s} y_{j_1^r}^r *e_r\right)\right) \cdot e_1\right)\cdots \right)\cdot e_1\Big\|_{Y_1(\cdots Y_k(F)\cdots)} \\
        &=\Big\|   A\left( \sum_{s=1}^k\sum_{r\in I_s} x^r *e_r\right)\Big\|_{F} = \left\|   A\left( x^1,\ldots,x^d\right)\right\|_{F},
    \end{align*}
    where the last equality comes from (\ref{uytr}). To complete the proof, note that   \begin{align*}
        \left\| A\left( x^1,\ldots,x^d\right)\right\|_{F} &= \left\| \widehat{A}_{B_{\mathcal{I}}} \left((y_j^1)_{j=1}^{\infty}, \ldots, (y_j^d)_{j=1}^{\infty}\right)\right\|_{Y_1(\cdots Y_k(F)\cdots)} \leq \| \widehat{A}_{B_{\mathcal{I}}}\| \cdot \prod_{r=1}^d\| (y_j^r)_{j=1}^{\infty}\|_{X_r(E_r)}\\
        &= \| \widehat{A}_{B_{\mathcal{I}}}\| \cdot \prod_{r=1}^d\| x^r\cdot e_{j_1^r}\|_{X_r(E_r)}=
         \| A \|_{B_{\mathcal{I}}; X_1,\ldots, X_d;Y_1,\ldots,Y_k} \cdot \prod_{r=1}^d\| x^r\|_{E_r}.
    \end{align*}
\end{proof}


\begin{proposition}
\label{prop_norma}
      If the sequence classes $X_1,\ldots,X_d,Y_1,\ldots,Y_k$ are linearly stable, then  
     the expression \eqref{eq_def_norma} defines a norm on the class $\mathcal{L}_{X_1,\ldots,X_d;Y_1,\ldots,Y_k}^{B_{\mathcal{I}}}$.
\end{proposition}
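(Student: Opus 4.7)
My plan is to reduce the three norm axioms to either the definition of $\widehat{A}_{B_{\mathcal{I}}}$ or to results already established in the excerpt, since $\|A\|_{B_{\mathcal{I}};X_1,\ldots,X_d;Y_1,\ldots,Y_k}$ is by definition the operator norm of $\widehat{A}_{B_{\mathcal{I}}}$ as a continuous $d$-linear map from $X_1(E_1)\times\cdots\times X_d(E_d)$ to $Y_1(\cdots Y_k(F)\cdots)$, which is known to be a Banach space under the standard multilinear operator norm. So the work reduces to checking that the assignment $A\mapsto \widehat{A}_{B_{\mathcal{I}}}$ respects the linear structure and that $\|\cdot\|_{B_{\mathcal{I}};X_1,\ldots,X_d;Y_1,\ldots,Y_k}$ separates points.

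First I would verify that for $A,B\in\mathcal{L}_{X_1,\ldots,X_d;Y_1,\ldots,Y_k}^{B_{\mathcal{I}}}(E_1,\ldots,E_d;F)$ and $\lambda\in\mathbb{K}$, one has
\[
    (\lambda A + B)^{\widehat{}}_{B_{\mathcal{I}}} = \lambda\, \widehat{A}_{B_{\mathcal{I}}} + \widehat{B}_{B_{\mathcal{I}}}
\]
as maps into $Y_1(\cdots Y_k(F)\cdots)$. This follows directly by plugging $(\lambda A+B)$ into the formula for the induced operator in Proposition \ref{proposicao_operador_induzido} and using coordinatewise linearity of the sums and of the evaluation $(\lambda A+B)(\sum_{s}\sum_{r\in I_s} x_{j_{n_s}^r}^r * e_r)$. (The fact that the class is a linear subspace, whose proof is omitted in the paper, ensures the induced sum again takes values in $Y_1(\cdots Y_k(F)\cdots)$.)

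From this identity, absolute homogeneity and the triangle inequality for $\|\cdot\|_{B_{\mathcal{I}};X_1,\ldots,X_d;Y_1,\ldots,Y_k}$ are immediate consequences of the corresponding properties of the multilinear operator norm: indeed,
\[
    \|\lambda A\|_{B_{\mathcal{I}};X_1,\ldots,X_d;Y_1,\ldots,Y_k} = \|\lambda\,\widehat{A}_{B_{\mathcal{I}}}\| = |\lambda|\,\|\widehat{A}_{B_{\mathcal{I}}}\| = |\lambda|\,\|A\|_{B_{\mathcal{I}};X_1,\ldots,X_d;Y_1,\ldots,Y_k},
\]
and similarly for the sum. Nonnegativity is clear, and if $A=0$ then $\widehat{A}_{B_{\mathcal{I}}}=0$, so $\|A\|_{B_{\mathcal{I}};X_1,\ldots,X_d;Y_1,\ldots,Y_k}=0$.

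The only step requiring genuine content is the separation of points: if $\|A\|_{B_{\mathcal{I}};X_1,\ldots,X_d;Y_1,\ldots,Y_k}=0$, we must conclude $A=0$. For this I would invoke Lemma \ref{proposicao_desigualdade_normas}, which yields
\[
    \|A\| \leq \|A\|_{B_{\mathcal{I}};X_1,\ldots,X_d;Y_1,\ldots,Y_k} = 0,
\]
forcing $A=0$ as an element of $\mathcal{L}(E_1,\ldots,E_d;F)$. I don't anticipate a real obstacle here, since the heavy lifting (continuity of $\widehat{A}_{B_{\mathcal{I}}}$ via closed graph in Proposition \ref{proposicao_operador_induzido}, and the domination $\|A\|\le\|A\|_{B_{\mathcal{I}};\ldots}$ in Lemma \ref{proposicao_desigualdade_normas}) has already been done; the sole subtlety is making sure the linearity identity for the induced map is stated cleanly before invoking the operator-norm axioms.
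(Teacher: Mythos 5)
Your proof is correct and follows essentially the same route as the paper: the definiteness axiom is obtained from Lemma \ref{proposicao_desigualdade_normas}, and the remaining axioms are deduced from the linearity of the correspondence $A \mapsto \widehat{A}_{B_{\mathcal{I}}}$ together with the standard properties of the multilinear operator norm. You merely spell out the linearity identity $(\lambda A + B)^{\widehat{}}_{B_{\mathcal{I}}} = \lambda\, \widehat{A}_{B_{\mathcal{I}}} + \widehat{B}_{B_{\mathcal{I}}}$ more explicitly than the paper does, which is fine.
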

\begin{proof}  
    If $\| A \|_{B_{\mathcal{I}};X_1,\ldots,X_d;Y_1,\ldots,Y_k} =0$, then 
    $A = 0$ by Lemma \ref{proposicao_desigualdade_normas}. The remaining norm axioms follow from the linearity of the correspondence 
    $A \mapsto \widehat{A}_{B_{\mathcal{I}}}$.
\end{proof}


\begin{theorem}
     Let $X_1,\ldots,X_d,Y_1,\ldots,Y_k$ be $B_{\mathcal{I}}$-compatible linearly stable sequence classes. Then $(\mathcal{L}_{X_1,\ldots,X_d;Y_1,\ldots,Y_k}^{B_{\mathcal{I}}},\| \cdot\|_{B_{\mathcal{I}}; X_1,\ldots ,X_d;Y_1,\ldots, Y_k})$ is a Banach ideal of $d$-linear operators.
\end{theorem}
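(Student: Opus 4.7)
The plan is to verify the four defining properties of a Banach multi-ideal: linearity of each component (already granted in the paper), containment of the finite-type operators of \eqref{finitetype} together with the normalization $\|I^d\|_{\mathcal{M}}=1$, the multi-ideal inequality, and completeness of the norm. The multi-ideal inequality is precisely Proposition \ref{prop_propriedade_ideal2}; Proposition \ref{prop_ideal_nao_trivial_implica_I^d_no_ideal}(b), whose hypothesis \eqref{caracterizacao_operador_I^d} is exactly the $B_{\mathcal{I}}$-compatibility assumption, gives the containment of the finite-type operators; and Proposition \ref{prop_norma} ensures that \eqref{eq_def_norma} is a norm. So the genuinely new tasks reduce to the normalization of $I^d$ and the completeness.

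For the normalization, I would recall the explicit form of $\widehat{I^d}_{B_{\mathcal{I}}}$ derived in \eqref{Desigid}. The compatibility inequality \eqref{condicao_compatibilidade2} applied to that identity says precisely that $\|\widehat{I^d}_{B_{\mathcal{I}}}\|\leq 1$, that is, $\|I^d\|_{B_{\mathcal{I}};X_1,\ldots,X_d;Y_1,\ldots,Y_k}\leq 1$. The reverse inequality is immediate from Lemma \ref{proposicao_desigualdade_normas} together with the obvious fact that $I^d$ has usual multilinear operator norm equal to one.

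The substantial task is completeness. Given a Cauchy sequence $(A_n)_{n=1}^\infty$ in $\mathcal{L}^{B_{\mathcal{I}}}_{X_1,\ldots,X_d;Y_1,\ldots,Y_k}(E_1,\ldots,E_d;F)$, Lemma \ref{proposicao_desigualdade_normas} makes it Cauchy in the usual operator norm, yielding a limit $A\in\mathcal{L}(E_1,\ldots,E_d;F)$. Simultaneously the induced maps $\widehat{(A_n)}_{B_{\mathcal{I}}}$ form a Cauchy sequence in the Banach space $\mathcal{L}(X_1(E_1),\ldots,X_d(E_d);Y_1(\cdots Y_k(F)\cdots))$ and converge to some continuous $d$-linear map $T$. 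The key step is to identify $T$ with $\widehat{A}_{B_{\mathcal{I}}}$: for each fixed tuple $((x_j^r)_{j=1}^\infty)_{r=1}^d\in X_1(E_1)\times\cdots\times X_d(E_d)$, the continuous embedding $Y_1(\cdots Y_k(F)\cdots)\stackrel{1}{\hookrightarrow}\ell_\infty(\cdots\ell_\infty(F)\cdots)$ forces convergence coordinate-by-coordinate, so the $(n_1,\ldots,n_k)$-th entry of $\widehat{(A_n)}_{B_{\mathcal{I}}}((x_j^1),\ldots,(x_j^d))$ converges in $F$ to the corresponding entry of $T((x_j^1),\ldots,(x_j^d))$. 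On the other hand, operator-norm convergence $A_n\to A$ gives $A_n(\sum_s\sum_{r\in I_s}x_{j_{n_s}^r}^r*e_r)\to A(\sum_s\sum_{r\in I_s}x_{j_{n_s}^r}^r*e_r)$ in $F$, so uniqueness of limits matches the two entry-by-entry. Consequently $\widehat{A}_{B_{\mathcal{I}}}$ is well defined with values in $Y_1(\cdots Y_k(F)\cdots)$ and coincides with $T$, whence $A$ lies in the class and $\|A_n-A\|_{B_{\mathcal{I}};X_1,\ldots,X_d;Y_1,\ldots,Y_k}=\|\widehat{(A_n)}_{B_{\mathcal{I}}}-T\|\to 0$.

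The main obstacle is exactly this identification step: the candidate limit $T$ lives only abstractly in a multilinear operator space, and one must extract coordinatewise information through the nested $\ell_\infty$ embedding to glue it to the pointwise behaviour of $A$ inherited from operator-norm convergence. The remaining axioms are either quoted from earlier propositions or follow from routine combinations of Lemma \ref{proposicao_desigualdade_normas} with the compatibility inequality.
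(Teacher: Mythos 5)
Your proposal is correct and follows essentially the same route as the paper: quote Propositions \ref{prop_propriedade_ideal2}, \ref{prop_ideal_nao_trivial_implica_I^d_no_ideal}(b) and \ref{prop_norma}, get $\|I^d\|_{B_{\mathcal{I}};X_1,\ldots,X_d;Y_1,\ldots,Y_k}=1$ from Lemma \ref{proposicao_desigualdade_normas} combined with \eqref{Desigid} and \eqref{condicao_compatibilidade2}, and prove completeness by the standard argument based on Lemma \ref{proposicao_desigualdade_normas} and coordinatewise convergence coming from the embedding into $\ell_\infty(\cdot)$. The only difference is that you spell out the completeness details (identification of the abstract limit $T$ with $\widehat{A}_{B_{\mathcal{I}}}$), which the paper explicitly omits as routine.
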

\begin{proof}
    Bearing in mind Propositions \ref{prop_ideal_nao_trivial_implica_I^d_no_ideal}(b), \ref{prop_propriedade_ideal2} and  \ref{prop_norma}, all that is left to prove is the equality 
    $\| I^d\|_{B_{\mathcal{I}}; X_1,\ldots ,X_d;Y_1,\ldots, Y_k} = 1$ and the completeness. For the equality, in $$1 = \|I^d\| \leq \| I^d\|_{B_{\mathcal{I}}; X_1,\ldots ,X_d;Y_1,\ldots, Y_k} \leq 1, $$
the first inequality comes from Lemma \ref{proposicao_desigualdade_normas} and the second from an obvious combination of (\ref{Desigid}) with (\ref{condicao_compatibilidade2}). 
The completeness of the space   $\mathcal{L}_{X_1,\ldots,X_d;Y_1,\ldots,Y_k}^{B_{\mathcal{I}}}(E_1,\ldots,E_d;F)$ with respect to the norm $\| \cdot\|_{B_{\mathcal{I}}; X_1,\ldots ,X_d;Y_1,\ldots, Y_k}$ can be proved by a standard argument using Lemma \ref{proposicao_desigualdade_normas} and the fact that convergence in a sequence class implies coordinatewise convergente, which follows from the assumption $X(\cdot) \stackrel{1}{\hookrightarrow} \ell_\infty(\cdot)$. We omit the (long and boring) details. 
\end{proof}

\section{Recovering the already studied classes}
\label{seccao_classes_estudadas}

In this section we show that the several classes of summing multilinear operators defined by the transformation of vector-valued sequences that have been studied in the literature are particular instances of our general construction. This includes, in particular, some classes that were not recovered in \cite{botelho_summing_2020}.



\subsection{$(X_1,\ldots,X_d;Y)$-summing operators}

Given sequence classes $X_1, \ldots, X_d, Y$, an operator  $A\in\mathcal{L}(E_1,\ldots,E_d;F)$ is $(X_1,\ldots,X_d;Y)$-summing according to \cite{botelho_transformation_2017} if $\left( A\left(x_n^1,\ldots,x_n^d\right)\right)_{n=1}^{\infty}  \in Y(F)$ whenever $(x_j^r)_{j=1}^{\infty}\in X_r(E_r)$, $r=1,\ldots,d$. The space $\mathcal{L}_{X_1,\ldots,X_d;Y}(E_1,\ldots,E_d;F)$ of such operators become a Banach space with the norm
$$\| A\|_{X_1,\ldots,X_d;Y} = \|\widehat{A} \colon X(E_1)\times\cdots\times X(E_d)\longrightarrow Y(F) \|. $$
Moreover, the class $\mathcal{L}_{X_1,\ldots,X_d;Y}$ is a Banach ideal of $d$-linear operators if the underlying sequence classes are linearly stable and satisfy the condition
 $X_1(\mathbb{K})\cdots X_d(\mathbb{K})\stackrel{1}{\hookrightarrow}Y(\mathbb{K})$, meaning that $(\lambda_j^1\cdots\lambda_j^d)_{j=1}^{\infty}\in Y(\mathbb{K})$ and $\| (\lambda_j^1\cdots\lambda_j^d)_{j=1}^{\infty}\|_{Y(\mathbb{K})}\leq \prod_{r=1}^d\| (\lambda_j^r)_{j=1}^{\infty}\|_{X_r(\mathbb{K})}$ whenever $(\lambda_j^r)_{j=1}^{\infty}\in X_r(\mathbb{K})$, $r=1,\ldots,d$.

To recover this class we recall the diagonal block $D(\mathbb{N}^d) = \{(j, \ldots, j) : j \in \mathbb{N}\}$ from Example \ref{exemplo_blocos_a} associated to the trivial partition and to the sequences $(j_n^1)_{n=1}^{\infty}=\cdots= (j_n^d)_{n=1}^{\infty}= (n)_{n=1}^{\infty}$.

\begin{proposition}\label{cap3-prop20}
    If the sequence classes $X_1,\ldots,X_d$ e $Y$ are linearly stable, then
$
        \mathcal{L}_{X_1,\ldots,X_d;Y}= \mathcal{L}_{X_1,\ldots,X_d;Y}^{D(\mathbb{N}^d)}$ isometrically and
 $X_1(\mathbb{K})\cdots X_d(\mathbb{K})\stackrel{1}{\hookrightarrow}Y(\mathbb{K})$ if and only if the $(d+1)$-tuple $(X_1,\ldots,X_d,Y)$ is $D(\mathbb{N}^d)$-compatible.
\end{proposition}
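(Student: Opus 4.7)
The plan is to show that both statements reduce to routine unpackings of the definitions once we substitute the specific data for the diagonal block: the trivial partition $\mathcal{I}_t = \{I_1\}$ with $I_1 = \{1,\ldots,d\}$ (so $k=1$ and $Y_1 = Y$), and the sequences $(j_n^r)_{n=1}^{\infty} = (n)_{n=1}^{\infty}$ for $r=1,\ldots,d$, as recalled just before the statement from Example \ref{exemplo_blocos_a}.

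First, I would unwind the defining expression. With the above data, for each $n_1 \in \mathbb{N}$,
\begin{align*}
\sum_{s=1}^{k} \sum_{r \in I_s} x_{j_{n_s}^r}^{r} * e_r
 = \sum_{r=1}^{d} x_{n_1}^{r} * e_r = (x_{n_1}^{1},\ldots,x_{n_1}^{d}),
\end{align*}
exactly as in the second computation of Example \ref{exemplo_expressao_a}. Therefore the iterated sequence of Definition \ref{definicao-operadores somantes} collapses to the single sequence $\bigl(A(x_n^1,\ldots,x_n^d)\bigr)_{n=1}^{\infty}$. This means that $A \in \mathcal{L}_{X_1,\ldots,X_d;Y}^{D(\mathbb{N}^d)}(E_1,\ldots,E_d;F)$ if and only if $A \in \mathcal{L}_{X_1,\ldots,X_d;Y}(E_1,\ldots,E_d;F)$, with \emph{identical} induced maps $\widehat{A}_{D(\mathbb{N}^d)} = \widehat{A}$. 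Since both norms $\|A\|_{X_1,\ldots,X_d;Y}$ and $\|A\|_{D(\mathbb{N}^d);X_1,\ldots,X_d;Y}$ are by definition the operator norm of this same induced $d$-linear map (see \eqref{eq_def_norma} and the formula recalled right after the definition of $\mathcal{L}_{X_1,\ldots,X_d;Y}$), the identification is trivially isometric.

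For the second assertion, I would substitute the same block data into the compatibility condition \eqref{condicao_compatibilidade1}-\eqref{condicao_compatibilidade2}. Under the trivial partition and $(j_n^r)_{n=1}^{\infty} = (n)_{n=1}^{\infty}$, one gets
\begin{align*}
\left(\cdots\left(\prod_{s=1}^{k}\prod_{r \in I_s} \lambda_{j_{n_s}^r}^{r}\right)_{n_k=1}^{\infty}\cdots\right)_{n_1=1}^{\infty}
= \bigl(\lambda_n^1 \cdots \lambda_n^d\bigr)_{n=1}^{\infty},
\end{align*}
and the norm in $Y_1(\cdots Y_k(\mathbb{K})\cdots)$ is just the norm in $Y(\mathbb{K})$. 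So \eqref{condicao_compatibilidade1}-\eqref{condicao_compatibilidade2} become, respectively, the assertions $(\lambda_n^1\cdots\lambda_n^d)_{n=1}^{\infty} \in Y(\mathbb{K})$ and $\|(\lambda_n^1\cdots\lambda_n^d)_{n=1}^{\infty}\|_{Y(\mathbb{K})} \leq \prod_{r=1}^{d}\|(\lambda_j^r)_{j=1}^{\infty}\|_{X_r(\mathbb{K})}$, for arbitrary scalar sequences $(\lambda_j^r)_{j=1}^{\infty}\in X_r(\mathbb{K})$. This is precisely the condition $X_1(\mathbb{K})\cdots X_d(\mathbb{K}) \stackrel{1}{\hookrightarrow} Y(\mathbb{K})$ recalled from \cite{botelho_transformation_2017}.

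There is essentially no obstacle in this proof: both statements follow by purely syntactic simplification of the general definitions under the specific choice of partition and sequences defining $D(\mathbb{N}^d)$. The only thing to be attentive to is the careful justification that in the trivial-partition, $k=1$ case, no information is lost in going from the general iterated expression to the single-sum expression, which is precisely what Example \ref{exemplo_expressao_a} records.
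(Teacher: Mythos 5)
Your proof is correct and follows essentially the same route as the paper: specialize the block data (trivial partition, sequences $(n)_{n=1}^{\infty}$) so that the defining expression collapses to $\bigl(A(x_n^1,\ldots,x_n^d)\bigr)_{n=1}^{\infty}$, note $\widehat{A}_{D(\mathbb{N}^d)}=\widehat{A}$ for the isometry, and identify the compatibility condition with $X_1(\mathbb{K})\cdots X_d(\mathbb{K})\stackrel{1}{\hookrightarrow}Y(\mathbb{K})$. The only cosmetic difference is in the second assertion, where the paper phrases the equivalence through the operator $I^d$ and its norm, while you compare the two scalar-sequence conditions directly; both are one-line definitional unpackings.
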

\begin{proof}   
We have $\mathcal{I}_t = \{I_1\}$ with $I_1 = \{1,\ldots,d\}$. For an operator $A \in {\cal L}(E_1, \ldots, E_d;F)$, $ A\in \mathcal{L}_{X_1,\ldots,X_d;Y}^{D(\mathbb{N}^d)}(E_1,\ldots,E_d;F)$ if and only if, for all sequences $(x_j^r)_{j=1}^{\infty}\in X_r(E_r)$, $r=1,\ldots,d$,
    \begin{align*}
        Y(F)\ni & \left( A\left(\sum_{s=1}^k \sum_{r\in I_s} x_{j_{n_s}^r}^r *e_r \right) \right)_{n_1=1}^{\infty} = \left( A\left(\sum_{r\in \{1,\ldots,d\}} x_{j_{n_1}^r}^r *e_r \right) \right)_{n_1=1}^{\infty} \\
        &= \left( A\left(\sum_{r\in \{1,\ldots,d\}} x_{n_1}^r *e_r \right) \right)_{n_1=1}^{\infty} = \left( A\left(x_n^1,\ldots,x_n^d\right) \right)_{n=1}^{\infty},
    \end{align*}
if and only if $A$ is $(X_1,\ldots,X_d;Y)$-summing. The equality of norms follows from $\widehat{A} = \widehat{A}_{D(\mathbb{N}^d)}$. For the second statement, note that
   $X_1(\mathbb{K})\cdots X_d(\mathbb{K})\stackrel{1}{\hookrightarrow}Y(\mathbb{K})$ if and only if $I^d\in \mathcal{L}_{X_1,\ldots,X_d;Y}(\mathbb{K}^d;\mathbb{K})$ and $\|I^d\|_{X_1, \ldots, X_d;Y} \leq 1$ if and only if $(X_1,\ldots,X_d,Y) $ is $D(\mathbb{N}^d)$-compatible.
    \end{proof}

We list a few classes of summing operators that Proposition \ref{cap3-prop20} recovers as particular cases of our construction: \\
\noindent $\bullet$ The class of absolutely $(q;p_1, \ldots, p_d)$-summing $d$-linear operators 
defined in the introduction. In our terminology, this is the class of $(D(\mathbb{N}^d);\ell_{p_1}^w(\cdot), \ldots, \ell_{p_d}^w(\cdot); \ell_q(\cdot))$-summing operators.\\
\noindent$\bullet$ The class of cotype $q$ multilinear operators, $q \geq 1$, considered in \cite{botelho_type_2016}, which is the class of 
$(D(\mathbb{N}^d);{\rm Rad}(\cdot), \ldots, {\rm Rad}(\cdot)); \ell_q(\cdot))$-summing operators in our notation.\\
\noindent$\bullet$ The class of weakly summing $(q; p_1, \ldots, p_d)$-summing $d$-linear operators, $\frac{1}{q} \leq \frac{1}{p_1} + \cdots + \frac{1}{p_d}$, studied in, e.g., \cite{blasco_coincidence_2016,botelho_transformation_2017,kim_multiple_2007,popa_mixing_2012}. This is the class of $(D(\mathbb{N}^d);\ell_{p_1}^w(\cdot), \ldots, \ell_{p_d}^w(\cdot); \ell_q^w(\cdot))$-summing operators.\\
\noindent$\bullet$ The class of type $(p_1, \ldots, p_d)$ $d$-linear operators, $\frac{1}{2} \leq \frac{1}{p_1} + \cdots +\frac{1}{p_d} < 1$, treated in \cite{botelho_type_2016}, which is the class of  $(D(\mathbb{N}^d);\ell_{p_1}(\cdot), \ldots, \ell_{p_d}(\cdot); {\rm Rad}(\cdot))$-summing operators.

The first two classes above are also encompassed by the construction of \cite{botelho_summing_2020}, the last two ones are not.

\subsection{Multiple $(\gamma_{s};X_1,\ldots,X_d)$-summing operators}

Here we show that all classes of multilinear operators studied in
\cite{ribeiro_generalized_2019} are particular instances of our geneeral construction. 
This reinforces that this constuction goes far beyond the one in \cite{botelho_summing_2020}, because the one and only class among the ones considered in \cite{ribeiro_generalized_2019} that is recovered in \cite{botelho_summing_2020} is the class of multiple $(q;p_1, \ldots, p_d)$-summing operators.

To recover the general case from \cite{ribeiro_generalized_2019} we need to recall the following notions: \\
$\bullet$ 
For $d\in\mathbb{N}$, a {\it $d$-sequence} in $E$ is a function $g \colon \mathbb{N}^d\longrightarrow E$. Writing 
        $g(j^1,\ldots,j^d) = x_{j^1.\ldots,j^d}$ for all 
        $j^1,\ldots,j^d\in\mathbb{N}$,
 $g$ can be represented by $(x_{j^1,\ldots,j^d})_{j^1,\ldots,j^d=1}^{\infty}$.\\
$\bullet$ 
$    c_{00}(E;\mathbb{N}^d)\hiderel{:=}\left\{ (x_{j^1,\ldots,j^d})_{j^1,\ldots,j^d=1}^{\infty}: x_{j^1,\ldots,j^d}\neq 0 \mbox{ only for finitely many } j^1,\ldots,j^d\right\}.$\\
$\bullet$ 
    $\ell_{\infty}(E;\mathbb{N}^d)=\left\{ (x_{j^1,\ldots,j^d})_{j^1,\ldots,j^d=1}^{\infty}: \sup_{j^1,\ldots,j^d\in\mathbb{N}}\Vert x_{j^1,\ldots,j^d}\Vert_E <\infty \right\}.$\\
$\bullet$ For $k_1, \ldots, k^d \in \mathbb{N}$, $e_{k^1,\ldots,k^d}\colon \mathbb{N}^d\longrightarrow \mathbb{K}$ is the $d$-sequence given by
\begin{align*}
    e_{k^1,\ldots,k^d}(j^1,\ldots,j^d) =\begin{cases} 1, & \mbox{ if } j^1=k^1,\ldots,j^d=k^d\\ 0, &\mbox{ otherwise. }\end{cases}
\end{align*}
$\bullet$ 
A {\it $d$-sequence class} is a map  $\gamma_s(\cdot;\mathbb{N}^d)$ that assigns to each Banach space  $E$ a Banach space $\gamma_s(E;\mathbb{N}^d)$ of $E$-valued $d$-sequences with the coordinatewise algebraic operations such that 
  $c_{00}(E;\mathbb{N}^d)\subset \gamma_s(E;\mathbb{N}^d)\stackrel{1}{\hookrightarrow}\ell_{\infty}(E;\mathbb{N}^d)$ and
        $\Vert e_{k^1,\ldots,k^d}\Vert_{\gamma_s(\mathbb{K};\mathbb{N}^d)}=1$ for all $k^1,\ldots,k^d\in \mathbb{N}$. \\
$\bullet$ Given a $d$-sequence class $\gamma_s(\cdot;\mathbb{N}^d)$ and sequence classes $X_1, \ldots, X_d$, according to \cite{ribeiro_generalized_2019} an operator $A \in {\cal L}(E_1, \ldots, E_d;F)$ is said to be {\it multiple $(\gamma_{s};X_1,\ldots,X_d)$-summing} if $
    \left(A\left(x_{j^1}^1,\ldots,x_{j^d}^d\right)\right)_{j^1,\ldots,j^d=1}^{\infty}\in \gamma_s(F;\mathbb{N}^d)$ whenever $\left(x_j^r\right)_{j=1}^{\infty}\in X_r(E_r)$, $r = 1,\ldots, d$. 
The set $\mathcal{L}_{\gamma_{s};X_1,\ldots,X_d}^m(E_1,\ldots,E_d;F)$ of all such operators becomes a Banach space 
with the norm $\|\cdot\|_{\mathcal{L}_{\gamma_{s};,X_1,\ldots,X_d}^m}$ given by the norm of the induced $d$-linear operator $\widehat{A} \colon  X_1(E_1) \times \cdots \times  X_d(E_d) \longrightarrow$ $\gamma_s(F;\mathbb{N}^d)$.


The key to recover these spaces from our framework is to establish an interplay between sequence classes and $d$-sequence classes. 
We do that considering the block $B = \mathbb{N}^d$ as in Example \ref{exemplo_blocos_c}, defined by a fixed bijection $\sigma \colon  \mathbb{N}\longrightarrow \mathbb{N}^d$ and sequences of natural numbers $(j_n^r)$, $r=1,\ldots,d$, where 
$\sigma(n) = (j_n^1,\ldots,j_n^d)$ for every $n\in\mathbb{N}$. We omit the proof of the following lemma.

\begin{lemma}\label{lemma_contrucao_classe_sequencia}
    Given a $d$-sequence class $\gamma_s(\cdot;\mathbb{N}^d)$, the correspondence  $E \mapsto Y_{\gamma_s}(E)$, where
    \begin{align}\label{eq_lemma_contrucao_classe_sequencia}
        Y_{\gamma_s}(E) :=\{ \left(g\circ \sigma (n)\right)_{n=1}^{\infty}\in E^{\mathbb{N}} :   g\in \gamma_s(E;\mathbb{N}^d) \} ~,~\| \left(g\circ \sigma (n)\right)_{n=1}^{\infty} \|_{Y_{\gamma_s}(E)} := \| g\|_{\gamma_s(E;\mathbb{N}^d)},
        \end{align}
is a sequence class.
\end{lemma}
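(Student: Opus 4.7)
The plan is to exploit the fact that $\sigma\colon\mathbb{N}\longrightarrow\mathbb{N}^d$ is a bijection to transport the Banach space and norm structure from $\gamma_s(E;\mathbb{N}^d)$ to $Y_{\gamma_s}(E)$ verbatim, and then verify the three defining properties of a sequence class.

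First, I would introduce the map
$$T\colon \gamma_s(E;\mathbb{N}^d)\longrightarrow E^{\mathbb{N}}~,~ T(g)=(g\circ\sigma(n))_{n=1}^{\infty},$$
and observe that, because $\sigma$ is bijective, $T$ is injective: if $g\circ\sigma = h\circ\sigma$ as sequences, then $g(j^1,\ldots,j^d)=h(j^1,\ldots,j^d)$ for every $(j^1,\ldots,j^d)\in\mathbb{N}^d$. By construction the image of $T$ is exactly $Y_{\gamma_s}(E)$, so $T$ is a linear bijection onto $Y_{\gamma_s}(E)$. The definition $\|T(g)\|_{Y_{\gamma_s}(E)}=\|g\|_{\gamma_s(E;\mathbb{N}^d)}$ is therefore unambiguous and makes $T$ an isometry. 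Hence $Y_{\gamma_s}(E)$, endowed with the coordinatewise operations inherited from $E^{\mathbb{N}}$ (which correspond under $T^{-1}$ to the coordinatewise operations on $\gamma_s(E;\mathbb{N}^d)$, since both reduce to pointwise operations on functions $\mathbb{N}\to E$ and $\mathbb{N}^d\to E$), becomes a Banach space isometrically isomorphic to $\gamma_s(E;\mathbb{N}^d)$.

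Next I would verify the three conditions of Definition \ref{definicao_classe_sequências}. For $c_{00}(E)\subset Y_{\gamma_s}(E)$: given $(x_n)_{n=1}^{\infty}\in c_{00}(E)$, define $g\colon\mathbb{N}^d\to E$ by $g(\sigma(n))=x_n$ for every $n$, which is well posed since $\sigma$ is a bijection; only finitely many values of $g$ are nonzero, so $g\in c_{00}(E;\mathbb{N}^d)\subset\gamma_s(E;\mathbb{N}^d)$, and $(x_n)_{n=1}^{\infty}=T(g)\in Y_{\gamma_s}(E)$. For the contractive inclusion $Y_{\gamma_s}(E)\stackrel{1}{\hookrightarrow}\ell_{\infty}(E)$: using that $\sigma$ is a bijection,
$$\|T(g)\|_{\ell_{\infty}(E)}=\sup_{n\in\mathbb{N}}\|g(\sigma(n))\|_E=\sup_{(j^1,\ldots,j^d)\in\mathbb{N}^d}\|g(j^1,\ldots,j^d)\|_E=\|g\|_{\ell_{\infty}(E;\mathbb{N}^d)}\leq\|g\|_{\gamma_s(E;\mathbb{N}^d)}=\|T(g)\|_{Y_{\gamma_s}(E)},$$
where the inequality uses the hypothesis $\gamma_s(E;\mathbb{N}^d)\stackrel{1}{\hookrightarrow}\ell_{\infty}(E;\mathbb{N}^d)$. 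Finally, for the normalization $\|e_j\|_{Y_{\gamma_s}(\mathbb{K})}=1$: writing $\sigma(j)=(k^1,\ldots,k^d)$, a direct check shows $e_{k^1,\ldots,k^d}\circ\sigma(n)=\delta_{n,j}$, so $T(e_{k^1,\ldots,k^d})=e_j$ and therefore $\|e_j\|_{Y_{\gamma_s}(\mathbb{K})}=\|e_{k^1,\ldots,k^d}\|_{\gamma_s(\mathbb{K};\mathbb{N}^d)}=1$ by the definition of a $d$-sequence class.

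Since everything reduces to transporting the existing structure of $\gamma_s(\cdot;\mathbb{N}^d)$ along the bijection induced by $\sigma$, no step presents a genuine obstacle; the only point requiring slight care is to notice that coordinatewise addition and scalar multiplication on $E^{\mathbb{N}}$ are intertwined by $T$ with the coordinatewise operations on $E^{\mathbb{N}^d}$, so linearity of $T$ and compatibility of the algebraic structures come for free.
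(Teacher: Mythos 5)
Your proof is correct, and since the paper explicitly omits the proof of this lemma, your transport-of-structure argument along the bijection $\sigma$ is exactly the routine verification the authors intended: injectivity of $g\mapsto (g\circ\sigma(n))_{n=1}^{\infty}$ makes the norm well defined and transfers completeness, and the three axioms of a sequence class ($c_{00}(E)\subset Y_{\gamma_s}(E)$, the contractive inclusion into $\ell_{\infty}(E)$, and $\|e_j\|_{Y_{\gamma_s}(\mathbb{K})}=1$ via $T(e_{k^1,\ldots,k^d})=e_j$ where $\sigma(j)=(k^1,\ldots,k^d)$) follow from the corresponding axioms of the $d$-sequence class. No gaps; the attention you give to well-definedness of the norm is precisely the one point that needed care.
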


To get a Banach ideal of multilinear operators, the following condition was imposed on the class
$\mathcal{L}_{\gamma_s;X_1,\ldots,X_d}^m$  in \cite{ribeiro_generalized_2019}: the symbol 
$    X_1(\mathbb{K})\cdots X_d(\mathbb{K})\stackrel{mult,1}{\hookrightarrow}\gamma_s(\mathbb{K};\mathbb{N}^d)$ means that
$\left(\lambda_{j_1}^1\cdots\lambda_{j_d}^d\right)_{j_1,\ldots,j_d=1}^{\infty}\in \gamma_s(\mathbb{K};\mathbb{N}^d)$ and $$\Big\| \left(\lambda_{j_1}^1\cdots\lambda_{j_d}^d\right)_{j_1,\ldots,j_d=1}^{\infty} \Big\|_{\gamma_s(\mathbb{K};\mathbb{N}^d)}\leq \prod_{r=1}^d\Big\| \left(\lambda_j^r\right)_{j=1}^{\infty} \Big\|_{X_r(E_r)}$$
whenever $\left(\lambda_j^r\right)_{j=1}^{\infty} \in X_r(\mathbb{K})$, $r = 1, \ldots, d$. The next recovers shows that our construction recovers the general case from 
\cite{ribeiro_generalized_2019}. 
Recall that $\mathcal{L}_{X_1,\ldots,X_d;Y_{\gamma_s}}^{\mathbb{N}^d}$ is the class of $(\mathbb{N}^d; X_1, \ldots, X_s; Y_{\gamma_s})$-summing operators in the sense of Section \ref{sec-2.2-construcao}.
\begin{proposition}
\label{cap3-prop21} Let $X_1,\ldots, X_d$ be sequence classes and $\gamma_s(\cdot;\mathbb{N}^d)$ be a $d$-sequence class such that  $X_1(\mathbb{K})\cdots X_d(\mathbb{K})\stackrel{mult,1}{\hookrightarrow}\gamma_s(\mathbb{K};\mathbb{N}^d)$. Then the $(d+1)$-tuple  $(X_1,\ldots,X_d,Y_{\gamma_s})$ is $\mathbb{N}^d$-compatible and     \begin{align*} \mathcal{L}_{X_1,\ldots,X_d;Y_{\gamma_s}}^{\mathbb{N}^d}(E_1,\ldots,E_d;F)= \mathcal{L}_{\gamma_s;X_1,\ldots,X_d}^m(E_1,\ldots,E_d;F)
\end{align*}
isometrically for all Banach spaces $E_1, \ldots, E_d,F$.
\end{proposition}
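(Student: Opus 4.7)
The plan is to unravel the definitions with the block $B = \mathbb{N}^d$ as in Example \ref{exemplo_blocos_c}: this is the trivial partition $\mathcal{I}_t$ (so $k=1$ and $I_1 = \{1,\ldots,d\}$) with the $d$ sequences $(j^r_n)_{n=1}^\infty$, $r=1,\ldots,d$, extracted from a fixed bijection $\sigma\colon \mathbb{N}\longrightarrow \mathbb{N}^d$, namely $\sigma(n) = (j_n^1,\ldots,j_n^d)$. With these choices, the inner double sum in Definition \ref{definicao-operadores somantes} collapses to $\sum_{r=1}^d x_{j_{n_1}^r}^r * e_r = (x_{j_{n_1}^1}^1,\ldots,x_{j_{n_1}^d}^d)$, so the defining expression for a $(\mathbb{N}^d; X_1,\ldots,X_d;Y_{\gamma_s})$-summing operator reads
\begin{align*}
    \bigl(A(x_{j_n^1}^1,\ldots,x_{j_n^d}^d)\bigr)_{n=1}^\infty \in Y_{\gamma_s}(F).
\end{align*}

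For the compatibility assertion, I would plug scalar sequences $(\lambda_j^r)_{j=1}^\infty \in X_r(\mathbb{K})$ into (\ref{condicao_compatibilidade1}) with the same $k=1$ reduction to obtain the sequence $\bigl(\prod_{r=1}^d \lambda_{j_n^r}^r\bigr)_{n=1}^\infty$. Setting $g(j_1,\ldots,j_d) := \lambda_{j_1}^1\cdots\lambda_{j_d}^d$, the hypothesis $X_1(\mathbb{K})\cdots X_d(\mathbb{K})\stackrel{mult,1}{\hookrightarrow}\gamma_s(\mathbb{K};\mathbb{N}^d)$ gives $g\in \gamma_s(\mathbb{K};\mathbb{N}^d)$ with $\|g\|_{\gamma_s(\mathbb{K};\mathbb{N}^d)}\leq \prod_{r=1}^d\|(\lambda_j^r)_{j=1}^\infty\|_{X_r(\mathbb{K})}$. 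Since $(g\circ \sigma(n))_{n=1}^\infty$ is precisely the sequence above, the definition of $Y_{\gamma_s}$ in Lemma \ref{lemma_contrucao_classe_sequencia} yields both the membership in $Y_{\gamma_s}(\mathbb{K})$ and the matching norm estimate (\ref{condicao_compatibilidade2}).

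For the equality of the classes, the argument is the same identification applied to $F$-valued data: given $(x_j^r)_{j=1}^\infty\in X_r(E_r)$, define the $d$-sequence $g(j_1,\ldots,j_d) := A(x_{j_1}^1,\ldots,x_{j_d}^d)$. Then $(g\circ \sigma(n))_{n=1}^\infty = \bigl(A(x_{j_n^1}^1,\ldots,x_{j_n^d}^d)\bigr)_{n=1}^\infty$, so by the very definition of $Y_{\gamma_s}(F)$, the displayed sequence lies in $Y_{\gamma_s}(F)$ if and only if $g\in \gamma_s(F;\mathbb{N}^d)$ -- which is the definition of $A$ being multiple $(\gamma_s;X_1,\ldots,X_d)$-summing. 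This yields the set-theoretic equality $\mathcal{L}_{X_1,\ldots,X_d;Y_{\gamma_s}}^{\mathbb{N}^d}(E_1,\ldots,E_d;F) = \mathcal{L}_{\gamma_s;X_1,\ldots,X_d}^m(E_1,\ldots,E_d;F)$.

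For the isometry, both norms are defined as the norm of the induced $d$-linear map. The key observation is that the norm equality in (\ref{eq_lemma_contrucao_classe_sequencia}),
\begin{align*}
\|(g\circ \sigma(n))_{n=1}^\infty\|_{Y_{\gamma_s}(F)} = \|g\|_{\gamma_s(F;\mathbb{N}^d)},
\end{align*}
applied pointwise to each $d$-tuple of input sequences shows $\|\widehat{A}_{\mathbb{N}^d}((x_j^1)_j,\ldots,(x_j^d)_j)\|_{Y_{\gamma_s}(F)} = \|\widehat{A}((x_j^1)_j,\ldots,(x_j^d)_j)\|_{\gamma_s(F;\mathbb{N}^d)}$ for all admissible inputs, whence the norms of the two induced $d$-linear operators coincide. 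No serious obstacle is expected; the only care required is the bookkeeping between the $d$-sequence representation and the linearly indexed representation via $\sigma$, which is exactly what Lemma \ref{lemma_contrucao_classe_sequencia} packages.
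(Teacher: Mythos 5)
Your proposal is correct and follows essentially the same route as the paper: reduce the block $\mathbb{N}^d$ (trivial partition, sequences from the bijection $\sigma$) so that the defining condition becomes $\bigl(A(x_{j_n^1}^1,\ldots,x_{j_n^d}^d)\bigr)_{n=1}^\infty \in Y_{\gamma_s}(F)$, then translate back and forth via $g\mapsto g\circ\sigma$ and the hypothesis $X_1(\mathbb{K})\cdots X_d(\mathbb{K})\stackrel{mult,1}{\hookrightarrow}\gamma_s(\mathbb{K};\mathbb{N}^d)$, with the norm identity of Lemma \ref{lemma_contrucao_classe_sequencia} giving the isometry. The only point you leave implicit, which the paper spells out, is that the bijectivity of $\sigma$ is what makes ``$(g\circ\sigma(n))_n\in Y_{\gamma_s}(F)$ iff $g\in\gamma_s(F;\mathbb{N}^d)$'' (and the well-definedness of the $Y_{\gamma_s}$-norm) valid, since $g$ is recovered uniquely from $g\circ\sigma$.
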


\begin{proof} Let $\left(\lambda_j^r\right)_{j=1}^{\infty} \in X_r(\mathbb{K})$, $r=1,\ldots,d$, be given. The assumption $X_1(\mathbb{K})\cdots X_d(\mathbb{K})\stackrel{mult,1}{\hookrightarrow}\gamma_s(\mathbb{K};\mathbb{N}^d)$ gives that  $\left(\lambda_{j_1}^1\cdots\lambda_{j_d}^d\right)_{j_1,\ldots,j_d=1}^{\infty}\in \gamma_s(\mathbb{K};\mathbb{N}^d)$. Defining $g\colon \mathbb{N}^d\longrightarrow\mathbb{K}$ by $g(j^1,\ldots,j^d) = \lambda_{j^1}^1\cdots\lambda_{j^d}^d,$ by  (\ref{eq_lemma_contrucao_classe_sequencia}) we get $\left(g \circ \sigma(n) \right)_{n=1}^{\infty} \in Y_{\gamma_s}(\mathbb{K}) $. Therefore,
    \begin{align*}
        \left(\lambda_{j_n^1}^1\cdots\lambda_{j_n^d}^d\right)_{n=1}^{\infty}  = \left(g(j_n^1, \ldots, j_n^d )\right)_{n=1}^{\infty} = \left(g \circ \sigma(n) \right)_{n=1}^{\infty} \in Y_{\gamma_s}(\mathbb{K}) \mbox{~~e}
    \end{align*}
    \begin{align*}
        \Big\Vert \left(\lambda_{j_n^1}^1\cdots\lambda_{j_n^d}^d\right)_{n=1}^{\infty} \Big\Vert_{Y_{\gamma_s}(\mathbb{K})} &= \Vert \left(g \circ \sigma(n) \right)_{n=1}^{\infty}\Vert_{Y_{\gamma_s}(\mathbb{K})} = \Vert g\Vert_{\gamma_s(\mathbb{K};\mathbb{N}^d)} \\
        &=  \Big\| \left(\lambda_{j_1}^1\cdots\lambda_{j_d}^d\right)_{j_1,\ldots,j_d=1}^{\infty} \Big\|_{\gamma_s(\mathbb{K};\mathbb{N}^d)}\leq \prod_{r=1}^d\Big\| \left(\lambda_j^r\right)_{j=1}^{\infty} \Big\|_{X_r(E_r)},
    \end{align*}
proving that $(X_1,\ldots,X_d,Y_{\gamma_s})$ is $\mathbb{N}^d$-compatible.

    Let the sequences $(x_j^r)_{j=1}^{\infty}\in X_r(E_r)$, $r=1,\ldots,d$, be given. On the one hand, if $A \in \mathcal{L}_{X_1,\ldots,X_d;Y_{\gamma_s}}^{\mathbb{N}^d}(E_1,\ldots,E_d;F)$ then $\left( A\left( x_{j_n^1}^1,\ldots, x_{j_n^d}^d\right)\right)_{n=1}^{\infty}\in Y_{\gamma_s}(F)$, hence there exists $g\in \gamma_s(F;\mathbb{N}^d)$ such that $(g\circ \sigma(n))_{n=1}^{\infty} = \left( A\left( x_{j_n^1}^1,\ldots, x_{j_n^d}^d\right)\right)_{n=1}^{\infty}$. 
           For every $(j^1,\ldots,j^d)\in\mathbb{N}^d$ there is a unique natural number $n$ such that $(j^1,\ldots,j^d) = \sigma(n)$, so $$A\left( x_{j^1}^1,\ldots, x_{j^d}^d\right)= g\circ \sigma(n)= g(j^1,\ldots,j^d).$$ It follows that 
    \begin{align*}
        \gamma_s(F;\mathbb{N}^d)\ni (g(j^1,\ldots,j^d))_{j^1,\ldots,j^d=1}^{\infty} =  (A\left( x_{j^1}^1,\ldots, x_{j^d}^d\right))_{j^1,\ldots,j^d=1}^{\infty},
    \end{align*}
 proving that $A\in \mathcal{L}_{\gamma_{s};X_1,\ldots,X_d}^m(E_1,\ldots,E_d;F)$. On the other hand, suppose that $A\in \mathcal{L}_{\gamma_{s};X_1,\ldots,X_d}^m(E_1,\ldots,E_d;F)$. Defining
      $$g \colon \mathbb{N}^d \longrightarrow F~,~g(j^1,\ldots,j^d) = A\left( x_{j^1}^1,\ldots, x_{j^d}^d\right),$$
we have $g \in \gamma_s(F;\mathbb{N}^d),$ from which it follows that 
    \begin{align*}
        Y(F)\ni (g\circ \sigma(n))_{n=1}^{\infty} = \left( g(j_n^1,\ldots,j_n^d)\right)_{n=1}^{\infty} =  \left( A\left( x_{j_n^1}^1,\ldots, x_{j_n^d}^d\right)\right)_{n=1}^{\infty},
    \end{align*}
showing that $A \in \mathcal{L}_{X_1,\ldots,X_d;Y_{\gamma_s}}^{\mathbb{N}^d}(E_1,\ldots,E_d;F)$. Moreover,
    \begin{align*}
        \| A\|_{\mathcal{L}_{\gamma_{s};X_1,\ldots,X_d}^m} &= \| \widehat{A}: X_1(E_1)\times\cdots\times X_d(E_d)\longrightarrow \gamma_s(F;\mathbb{N}^d) \| \\
        &= \sup\{\| \left(A\left(x_{j^1}^1,\ldots,x_{j^d}^d \right)\right)_{j^1,\ldots,j^d=1}^{\infty}\|_{\gamma_s(F;\mathbb{N}^d)}: (x_j^r)_{j=1}^{\infty}\in B_{X_r(E_r)}, r=1,\ldots,d\}\\
        &= \sup\{\| g\|_{\gamma_s(F;\mathbb{N}^d)}: (x_j^r)_{j=1}^{\infty}\in B_{X_r(E_r)}, r=1,\ldots,d\}\\
        &= \sup\{\| (g\circ \sigma(n))_{n=1}^{\infty}\|_{Y_{\gamma_s}(F)}: (x_j^r)_{j=1}^{\infty}\in X_r(E_r), r=1,\ldots,d\}\\
        &= \sup\Big\{\Big\|  \left( A\left( x_{j_n^1}^1,\ldots, x_{j_n^d}^d\right)\right)_{n=1}^{\infty}\Big\|_{Y_{\gamma_s}(F)}: (x_j^r)_{j=1}^{\infty}\in B_{X_r(E_r)}, r=1,\ldots,d\Big\}\\
        &= \| A\|_{\mathbb{N}^d;X_1,\ldots,X_d;Y_{\gamma_s}}.
    \end{align*}
\end{proof}

\begin{example}\label{Exex}\rm Among the classes encompassed by \cite{ribeiro_generalized_2019}, which are recovered by our construction by Proposition \ref{cap3-prop21}, we mention the following:

\noindent(a) The class of multiple $(p,q_1,\ldots,q_d)$-summing multilinear operators studied in, e.g., \cite{bayart_multiple_2018, botelho_inclusions_2009, botelho_when_2009, defant_coordinatewise_2010, montanaro_applications_2012, perez-garcia_unbounded_2008}, which, in our terminology, is the class of $(\mathbb{N}^d; \ell_{q_1}^w(\cdot),\ldots,\ell_{q_d}^w(\cdot);\ell_p(\cdot))$-summing operators.\\
(b) The class of strongly Cohen multiple $p$-summing operators introduced in  
 \cite{campos_cohen_2014}, which is the class of 
$(\mathbb{N}^d;\ell_p(\cdot), \stackrel{(d)}{\cdots}, \ell_p(\cdot);\ell_p\langle\cdot\rangle)$-summing operators.\\
(c) The class of weakly multiple mid $(p;q_1,\ldots,q_d)$-summing operators introduced in \cite{botelho_operator_2017}.
Investigating the sequence spaces introduced in \cite{karnsinha}, the sequence class $\ell_p^{\rm mid}(\cdot)$, $1\leq p<\infty$, of mid $p$-summable sequences was defined in 
\cite{botelho_operator_2017}. The class of multilinear operators we are referring to is the class of $(\mathbb{N}^d;\ell_{q_1}^w(\cdot),\ldots,\ell_{q_d}^w(\cdot);\ell_p^{\rm mid}(\cdot))$-summing operators.

The following two examples were first studied in \cite{ribeiro_generalized_2019}.

\noindent(d) The class of strongly multiple $(s,q,p)$-mixing operators, $1\leq q\leq s\leq \infty$, $p\leq q$. This is the class of 
$(\mathbb{N}^d;\ell_{m(s,q)}(\cdot),\stackrel{d}{\ldots},\ell_{m(s,q)}(\cdot); \ell_p(\cdot))$-summing operators, where $\ell_{m(s,q)}(E)$ is the space of mixed $(s,q)$-summable $E$-valued sequences (see \cite{pietsch_operator_1978}).\\
(e) The class of strongly multiple mid $p$-summing operators, $1\leq p<\infty$, which is the class of 
$(\mathbb{N}^d;\ell_p^{\rm mid}(\cdot),\stackrel{d}{\ldots}, \ell_p^{\rm mid}(\cdot); \ell_p(\cdot))$-summing operators.
\end{example}


\subsection{Partially multiple summing operators}\label{secao_parcialmente_multiplo_somante}
This class of operators was investigated in 
\cite{albuquerque_summability_2018, albuquerque_summability_2019, araujo_classical_2016} and it was the main motivation for Definition 
\ref{definicao-operadores somantes}. Only a few specific cases of this class were recovered in \cite{botelho_summing_2020}. Here we will recover the general case of this class as a particular instance of our constuction. 


    Fixed a partition  $\mathcal{I} = \{I_1, \ldots, I_k\}$ of $\{1,\ldots,d\}$ and given $(\textbf{p},\textbf{q}):=( p_1,\ldots,p_d,q_1,\ldots,q_k)\in [1,\infty)^{d+k}$, $\frac{1}{q_s}\leq \sum\limits_{r\in I_s}\frac{1}{p_r}$ for every $s\in\{1,\ldots , k\}$, a $d$-linear operator $A\colon E_1 \times \cdots \times E_d \longrightarrow F$ is said to be $\mathcal{I}$-partially multiple $(\textbf{q};\textbf{p})$-summing if there exists a constant $C>0$ such that
    \begin{align*}
        \left(\sum_{n_1=1}^{\infty}\left(\cdots\left(\sum_{n_k=1}^{\infty}\Big\| A\left(\sum_{s=1}^k\sum_{r\in I_s} x_{n_s}^r *e_r\right)\Big\|^{q_k}\right)^{\frac{q_{k-1}}{q_k}}\cdots \right)^{\frac{q_1}{q_2}}\right)^{\frac{1}{q_1}} \leq C \cdot \prod_{r=1}^d\| (x_j^r)_{j=1}^{\infty}\|_{w,p_r}
    \end{align*}
for any sequences $(x_j^r)_{j=1}^{\infty}\in \ell_{p_r}^w(E_r)$, $r=1,\ldots,d$. The $\mathcal{I}$-partially multiple $(\textbf{q};\textbf{p})$-summing norm of $A$ is defined as the infimum of all such constants $C$ and denoted by  $\pi_{(\textbf{q};\textbf{p})}(A)$.
The Banach space of all such operators is denoted by 
$\prod_{(\textbf{q};\textbf{p})}^{k,d,\mathcal{I}}(E_1,\ldots,E_d;F)$.

\begin{proposition}\label{cap3-prop22}
    For all Banach spaces $E_1,\ldots,E_d,F$,
    \begin{align*}
        \prod_{(\textbf{q};\textbf{p})}^{k,d,\mathcal{I}}(E_1,\ldots,E_d;F) = \mathcal{L}_{\ell_{p_1}^w(\cdot),\ldots,\ell_{p_d}^w(\cdot);\ell_{q_1}(\cdot),\ldots,\ell_{q_k}(\cdot)}^{B_{\mathcal{I}}}(E_1,\ldots,E_d;F)
    \end{align*}
isometrically, where $B_{\mathcal{I}}$ is the block associated to the partition $\mathcal{I}$ and to the sequences $(j_n^1)_{j=1}^{\infty} = \cdots = (j_n^d)_{j=1}^{\infty} = (n)_{n=1}^{\infty}$. Moreover,   $\frac{1}{q_s}\leq \sum\limits_{r\in I_s}\frac{1}{p_r}$ for every $s\in\{1,\ldots , k\}$ if and only if the  $(d+k)$-tuple  $(\ell_{p_1}^w(\cdot),\ldots,\ell_{p_d}^w(\cdot);\ell_{q_1}(\cdot),\ldots,\ell_{q_k}(\cdot))$ is $B_{\mathcal{I}}$-compatible.
\end{proposition}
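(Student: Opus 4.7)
The plan is to use Proposition \ref{proposicao_operador_induzido} to convert the definition of $\mathcal{I}$-partially multiple $(\mathbf{q};\mathbf{p})$-summing operators into the summability of the induced $d$-linear map, and then to show that the iterated $\ell_{q_1}(\cdots \ell_{q_k}(F)\cdots)$-norm is exactly the left-hand side appearing in the defining inequality of $\pi_{(\mathbf{q};\mathbf{p})}(A)$. With the specific choice $(j_n^r)_{n=1}^\infty = (n)_{n=1}^\infty$ for every $r=1,\ldots,d$, Example \ref{exemplo_blocos_a} shows that
$$\sum_{s=1}^k\sum_{r\in I_s} x_{j_{n_s}^r}^r * e_r = \sum_{s=1}^k\sum_{r\in I_s} x_{n_s}^r * e_r,$$
so the expression inside $A(\cdot)$ in Definition \ref{definicao-operadores somantes} coincides literally with the one in the definition of $\pi_{(\mathbf{q};\mathbf{p})}(A)$. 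Recalling that the norm of a sequence in $\ell_{q_1}(\cdots\ell_{q_k}(F)\cdots)$ is precisely the iterated $q_k$-to-$q_1$ sum of the norms in $F$, the inequality
$$\|\widehat{A}_{B_{\mathcal{I}}}((x_j^r)_{j=1}^\infty)_{r=1}^d\|_{\ell_{q_1}(\cdots\ell_{q_k}(F)\cdots)} \leq C\prod_{r=1}^d\|(x_j^r)_{j=1}^\infty\|_{w,p_r}$$
holds with best constant $C$ on either side. Hence $A \in \prod_{(\mathbf{q};\mathbf{p})}^{k,d,\mathcal{I}}(E_1,\ldots,E_d;F)$ iff $\widehat{A}_{B_{\mathcal{I}}}$ is bounded iff $A \in \mathcal{L}^{B_{\mathcal{I}}}_{\ell^w_{p_1}(\cdot),\ldots,\ell^w_{p_d}(\cdot);\ell_{q_1}(\cdot),\ldots,\ell_{q_k}(\cdot)}(E_1,\ldots,E_d;F)$, with equal norms.

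For the second statement, I would use the fact that $\ell_p^w(\mathbb{K})=\ell_p(\mathbb{K})$ isometrically so the compatibility inequality only involves products of scalar $\ell_{p_r}$-norms. The crucial observation is that the scalar family in \eqref{condicao_compatibilidade2} is a pointwise product that splits across the blocks of the partition: writing $a_{n_s}^{(s)} := \prod_{r\in I_s}\lambda_{n_s}^r$, the iterated norm of $\bigl(a_{n_1}^{(1)}\cdots a_{n_k}^{(k)}\bigr)_{n_1,\ldots,n_k}$ in $\ell_{q_1}(\cdots \ell_{q_k}(\mathbb{K})\cdots)$ factorises as $\prod_{s=1}^k\|(a_n^{(s)})_{n=1}^\infty\|_{\ell_{q_s}}$. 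Thus $B_{\mathcal{I}}$-compatibility is equivalent to
$$\prod_{s=1}^k \left(\sum_{n=1}^\infty \prod_{r\in I_s}|\lambda_n^r|^{q_s}\right)^{1/q_s} \leq \prod_{s=1}^k \prod_{r\in I_s}\|(\lambda_j^r)_j\|_{\ell_{p_r}},$$
for all scalar sequences, which reduces (since the factors with different $s$ involve disjoint sequences) to the block-by-block inequality
$$\left(\sum_{n=1}^\infty \prod_{r\in I_s}|\lambda_n^r|^{q_s}\right)^{1/q_s} \leq \prod_{r\in I_s}\|(\lambda_j^r)_j\|_{\ell_{p_r}}$$
for each $s\in\{1,\ldots,k\}$.

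Finally, this last inequality is the classical generalized Hölder inequality with exponents $p_r/q_s$ ($r\in I_s$), which is valid if and only if $\sum_{r\in I_s}q_s/p_r \geq 1$, that is, $\tfrac{1}{q_s}\leq\sum_{r\in I_s}\tfrac{1}{p_r}$. For the converse direction, one takes suitable test sequences that isolate the block $I_s$: letting $\lambda_j^r=\delta_{j,1}$ for $r\notin I_s$ (whence $a_{n_{s'}}^{(s')}=0$ unless $n_{s'}=1$), the iterated compatibility inequality collapses to the single-block inequality above, forcing the Hölder condition for that $s$. The main (mild) obstacle is carefully verifying the factorisation of the iterated $\ell_{q_1}(\cdots\ell_{q_k})$-norm for a product sequence and the interplay with the test sequence argument for the necessity direction; everything else is essentially bookkeeping within the framework already set up.
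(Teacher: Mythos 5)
Your proposal is correct and follows essentially the same route as the paper: the first statement is obtained by identifying the induced operators (via Proposition \ref{proposicao_operador_induzido}) so that the classes and norms coincide literally, and the second by factorising the iterated $\ell_{q_1}(\cdots\ell_{q_k}(\mathbb{K})\cdots)$-norm of the product sequence into blockwise $\ell_{q_s}$-norms and then invoking H\"older's inequality together with the inclusions between $\ell_p$-spaces (your test-sequence argument for necessity just makes explicit what the paper leaves to the reader).
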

\begin{proof}
    For an operator $A \in {\cal L}(E_1, \ldots, E_d;F)$, $A\in \mathcal{L}_{\ell_{p_1}^w(\cdot),\ldots,\ell_{p_d}^w(\cdot);\ell_{q_1}(\cdot),\ldots,\ell_{q_k}(\cdot)}^{B_{\mathcal{I}}}(E_1,\ldots,E_d;F)$ if and only if
    \begin{align*}
        \ell_{q_1}(\ldots\ell_{q_k}(F)\ldots)&\ni \left( \cdots \left( A\left( \sum_{s=1}^k \sum_{r\in I_s} x_{j_{n_s}^r}^r *e_r \right) \right)_{n_k=1}^{\infty} \ldots\right)_{n_1=1}^{\infty} = \\
        &= \left( \cdots \left( A\left( \sum_{s=1}^k \sum_{r\in I_s} x_{n_s}^r *e_r \right) \right)_{n_k=1}^{\infty} \ldots\right)_{n_1=1}^{\infty}
    \end{align*}
 whenever $(x_j^r)_{j=1}^{\infty}\in \ell_{p_r}^w(E_r)$, $r=1,\ldots,d$, if and only if 
 $A\in \prod_{(\textbf{q};\textbf{p})}^{k,d,\mathcal{I}}(E_1,\ldots,E_d;F)$. Since the induced operators are the same, the norms coincide. 
Given scalar sequences $(\lambda_j^r)_{j=1}^{\infty}\in \ell_{p_r}^w(\mathbb{K}) =  \ell_{p_r}$, $r=1,\ldots,d$, and writing $I_s = \{ r_1^s,\ldots,r_{m_s}^s\}$ for every $s=1,\ldots,k$, it is not difficult to check that
    \begin{align*}
       \Big\| \left(\cdots \left( \prod_{s=1}^k\prod_{r\in I_s} \lambda_{n_s}^r\right)_{n_k=1}^{\infty}\cdots\right)_{n_1=1}^{\infty}&\Big\|_{ \ell_{q_1}(\cdots\ell_{q_k}(\mathbb{K})\cdots)} \\&=
        \Big\| \left(\prod_{i=1}^{m_1} \lambda_{n_1}^{r_i^1} \right)_{n_1=1}^{\infty}\Big\|_{\ell_{q_1}}\cdots\Big\| \left(\prod_{i=1}^{m_k} \lambda_{n_k}^{r_i^k} \right)_{n_k=1}^{\infty}\Big\|_{\ell_{q_1}}.
    \end{align*}
Now the second statement follows from H\"older's inequality and the usual inequalities between $\ell_p$-norms.
\end{proof}

In the next example we recover a class that was studied in
\cite{albuquerque_summability_2018,albuquerque_summability_2019, albuquerque_sharp_2014, araujo_classical_2016} as an anisotropic generalization of the multiple $(q;p_1,\ldots,p_d)$-summing operators of Example \ref{Exex}(a)). 

\begin{example}\rm For $1\leq p_1,\ldots,p_d,q_1,\ldots,q_d<\infty$ with $p_r\leq q_r$, $r=1,\ldots,d$, an operator $A\in\mathcal{L}(E_1,\ldots,E_d;F)$ is said to be multiple $(q_1,\ldots,q_d;p_1,\ldots,p_d)$-summing if
    \begin{align*}
        \sum_{n_1=1}^{\infty}\left(\sum_{n_2=1}^{\infty}\cdots \left(\sum_{n_d=1}^{\infty} \| A(x_{n_1}^1,\ldots,x_{n_d}^d)\|_F^{q_d}\right)^{\frac{q_{d-1}}{q_d}}\cdots \right)^{\frac{q_1}{q_2}}<\infty,
    \end{align*}
whenever $(x_j^r)_{j=1}^{\infty}\in \ell_{p_r}^w(E_r)$, $r=1,\ldots,d$. This is the class recovered in Proposition \ref{cap3-prop22} for the discrete partition ${\cal I}_d$. 

\end{example}

\subsection{$\Lambda$-$(r,p)$-summing operators} \label{bayartepopa}

In this subsection we show that the class of operators recently studied, independently, in
\cite{bayart_coincidence_2020} and  \cite{popa_operators_2020} is also a particular case of our construction. 

     Let $p,q\geq 1$ and $\Lambda\subset\mathbb{N}^d$ be given. An operator $A\in \mathcal{L}(E_1,\ldots, E_d;F)$ is said to be $\Lambda$-$(q,p)$-summing if there exists a constant $C>0$ such that, for all sequences, $(x_j^r)_{j=1}^{\infty}\in \ell_p^w(E_r)$, $r=1,\ldots,d$, it holds
    \begin{align}
        \left( \sum_{(j^1,\ldots,j^d)\in\Lambda}\| A(x_{j^1}^1,\ldots,x_{j^d}^d)\|^q \right)^{\frac{1}{q}}\leq C\prod_{r=1}^d\| (x_j^r)_{j=1}^{\infty}\|_{\ell_p^w(E_r)}.
        \label{des-2.12}
    \end{align}

The $\Lambda$-$(q,p)$-summing norm of $A$ is defined as the infimum of these constants $C$ and denoted by
$\pi_{q,p}^{\Lambda}(\cdot)$. Let us see that this class coincides isometrically with our class $\mathcal{L}_{\ell_p^w(\cdot),\stackrel{d}{\ldots},\ell_p^w(\cdot);\ell_q(\cdot)}^{\Lambda}$ if $\Lambda$ is infinite.

According to Example \ref{exemplo_blocos_c}, let us fix a bijection $\sigma \colon \mathbb{N}\longrightarrow\Lambda$ and consider, for $r=1,\ldots,d$, the sequence $(j_n^r)_{n=1}^{\infty}$ in which $j_n^r$ is the $r$-th coordinate of  the $d$-tuple $\sigma(n)\in\Lambda$. Henceforth we consider $\Lambda$ as the block  associated to the trivial partition $\mathcal{I}_t$ and to the aforementioned sequences of natural numbers. 
Note that the construction of $\mathcal{L}_{\ell_p^w(\cdot),\stackrel{d}{\ldots},\ell_p^w(\cdot);\ell_q(\cdot)}^{\Lambda}$ does not depend on the bijection $\sigma$ because the sequence class 
$\ell_q(\cdot)$ is symmetric (cf. Remark \ref{obs-2}).

An operator $A\in {\cal L}(E_1,\ldots,E_d;F)$ is $\Lambda$-$(r,p)$-summing $\Longleftrightarrow$ there is $C > 0 $  as in (\ref{des-2.12}) $\Longleftrightarrow$ there is $C > 0$ such that 
    \begin{align}
        \left( \sum_{n=1}^{\infty}\| A(x_{j_n^1}^1,\ldots,x_{j_n^d}^d)\|^q \right)^{\frac{1}{q}}\leq C\prod_{r=1}^d\| (x_j^r)_{j=1}^{\infty}\|_{\ell_p^w(E_r)}
        \label{eq_lambia_daniel_2}
    \end{align}
whenever $(x_j^r)_{j=1}^{\infty} \in \ell_p^w(E_r)$, $r=1,\ldots d$  (we are using that the series $\sum\limits_{(j_1,\ldots,j_d)\in\Lambda}\| A(x_{j_1}^1,\ldots,x_{j_d}^d)\|^q$ is unconditionally convergent) $\Longleftrightarrow$ $
A\in \mathcal{L}_{\ell_p^w(\cdot),\stackrel{d}{\ldots},\ell_p^w(\cdot);\ell_q(\cdot)}^{\Lambda}(E_1,\ldots,E_d;F).$ Furthermore,
    \begin{align*}
        \pi_{q,p}^{\Lambda}(A) &= \inf\{ C>0: \mbox{ (\ref{des-2.12}) holds}\}= \inf\{ C>0: \mbox{ (\ref{eq_lambia_daniel_2}) holds}\} \\& =\| \widehat{A}_{\Lambda}\| = \| A\|_{\Lambda; \ell_p^w(\cdot),\stackrel{d}{\ldots},\ell_p^w(\cdot);\ell_q(\cdot)}.
    \end{align*}

\section{Downward coherence and coincidence results}

Coherence of multi-ideals and coincidence results are classical topics in the theory of multilinear operators 
(see  \cite{carando_coherent_2009,carando_every_2012, carando_holomorphic_2012, pellegrino_almost_2012,pellegrino_multi-ideals_2014,ribeiro_absolutely_2021,ribeiro_coherence_2020}). They make the interplay between multi-ideals/polynomials ideals and holomorphy types (see \cite{{botelho_holomorphy_2006}, campos_cohen_2014, carando_coherent_2009, carando_holomorphic_2012}), and applications to linear dynamics have been found 
(recent applications can be found in \cite{blas20201, blas20202, muro2017, muro2018}). In this section we provide applications of our construction to these issues. 


The symbol $[\stackrel{r}{\ldots}]$ means that the $r$-th coordinate has been omitted, for example: 
 $$E_1\times [\stackrel{r}{\ldots}]\times E_d = E_1\times \cdots E_{r-1}\times E_{r+1}\times \cdots \times E_d. $$
Given $A \in {\cal L}(E_1, \ldots, E_d;F)$, for $r=1,\ldots,d$ and $x^r\in E_r$, we define
$$A_{x^r}\colon E_1\times [\stackrel{r}{\ldots}]\times E_d\longrightarrow F~,~ A_{x^r}(x^1,[\stackrel{r}{\ldots}],x^d) = A(x^1,\ldots,x^d).$$
It is plain that $A_{x^r}$ is $(d-1)$-linear, continuous and $\|A_{x^r}\| \leq \|A\|\cdot \|x^r\|$.

A Banach multi-ideal $(\cal M, \|\cdot\|_{\cal M})$ is downward coherent if, 
 regardless of the operator $A \in {\cal M}(E_1, \ldots, E_d;F)$, $r = 1, \ldots, d$, and $x^r \in E_r$, it holds $A_{x^r} \in {\cal M}(E_1, [\stackrel{r}{\ldots}], E_d;F)$ and $\|A_{x^r}\|_{\cal M} \leq \|A\|_{\cal M} \cdot\|x^r\|$. 

\subsection{The isotropic case}

Throughout this subsection 
$\mathbb{N}^d$ is regarded as the block associated to the trivial partition 
$\mathcal{I}_t$ and to the sequences 
$(j_n^r)_{n=1}^{\infty}$, $r=1,\ldots,d$, defined by a fixed bijection 
$\sigma\colon\mathbb{N}\longrightarrow\mathbb{N}^d$, according to Example \ref{exemplo_blocos_c}. We denote $\sigma(n) = (j_n^1,\ldots,j_n^d)$ for every $n\in\mathbb{N}$. To move from $d$-linear operators to $(d-1)$-linear operators we need to regard $\mathbb{N}^{d-1}$ as a block related, in a convenient way, to the sequences that define the block $\mathbb{N}^d$. We do that by considering the set 
\begin{align}
    B = \{(j^1,\ldots,j^d)\in\mathbb{N}^d\colon j^m = 1\},
    \label{conjunto_B}
\end{align}
and defining $n_1 = \min\{ n : \sigma(n)\in B\}$, $n_2  = \min\{ n \neq n_1 : \sigma(n)\in B\}, \ldots$, $n_k = \min\{ n : n\notin \{n_1,\cdots, n_{k-1}\} \mbox{ and } \sigma(n)\in B\}, \ldots$. 
Since $B$ is infinite and $\sigma$ is a bijection, this provides an increasing sequence 
$(n_k)_{k=1}^\infty$ of natural numbers.  
For each $m \in \{1, \ldots, d\}$, consider the subsequences $(j_{n_k}^r)_{k=1}^{\infty}$ of $(j_n^r)_{n=1}^{\infty}$, $r=1,[\stackrel{m}{\ldots}],d$. It is not difficult to check that the correspondence 
$$k\in\mathbb{N}\mapsto (j_{n_k}^1,[\stackrel{m}{\ldots}],j_{n_k}^d)\in\mathbb{N}^{d-1} $$
is a bijection, hence $\mathbb{N}^{d-1}$ is a block associated to the trivial partition and to these subsequences. This is how $\mathbb{N}^{d-1}$ shall be regarded from now on.

\begin{definition}\rm We say that a sequence class $X$ is \textit{0-invariant} if, for every Banach space $E$ and any $E$-valued sequence $(x_j)_{j=1}^{\infty}$, it holds
	\begin{align*}
	    (x_j)_{j=1}^{\infty} \in X(E) \Longleftrightarrow (x_j^0)_{j=1}^{\infty}\in X(E) \mbox{ e } \|(x_j)_{j=1}^{\infty} \|_{X(E)} = \|(x_j^0)_{j=1}^{\infty} \|_{X(E)},
	\end{align*}
where $(x_j^0)_{j=1}^{\infty}$ stands for the zerofree version of $(x_j)_{j=1}^{\infty}$, meaning that $x_j^0$ is the $j$-th nonzero coordinate of $(x_j)_{j=1}^{\infty}$, if it exists, or zero otherwise (see \cite[Definition 3.1]{botelho_complete_2020} or, in the case of scalar sequences,  \cite{mathnotes}).
\end{definition}

Most of the usual sequence classes are $0$-invariant.


\begin{proposition}
     Let $X_1,\ldots,X_d$ and $Y$ be sequence classes with $Y$ being $0$-invariant,  $1\leq m\leq d$, $E_1,\ldots, E_d, F$ be Banach spaces and $a^m\in E_m$.  If  $A\in\mathcal{L}_{X_1,\ldots,X_d;Y}^{\mathbb{N}^d}(E_1,\ldots,E_d;F)$, then $$A_{a^m}\in \mathcal{L}_{X_1,[\stackrel{m}{\ldots}],X_d;Y}^{\mathbb{N}^{d-1}}(E_1,[\stackrel{m}{\ldots}],E_d;F) {\rm ~~and~~} \| A_{a^m}\|_{\mathbb{N}^{d-1};X_1,[\stackrel{m}{\ldots}],X_d;Y}\leq \| A\|_{\mathbb{N}^d;X_1,\ldots,X_d;Y}\cdot \| a^m\|.$$
    \label{prop-decaida-isotropico}
\end{proposition}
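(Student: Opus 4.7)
The plan is to reduce summability of $A_{a^m}$ along $\mathbb{N}^{d-1}$ to summability of $A$ along $\mathbb{N}^d$ by inserting a carefully chosen auxiliary sequence in the $m$-th slot and then exploiting the $0$-invariance of $Y$.

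First I would fix arbitrary sequences $(x_j^r)_{j=1}^{\infty}\in X_r(E_r)$ for $r\in\{1,\ldots,d\}\setminus\{m\}$ and define $(x_j^m)_{j=1}^{\infty} := a^m\cdot e_1$. This sequence lies in $c_{00}(E_m)\subset X_m(E_m)$, and under the linear stability that is natural for these results (applied to $\lambda\mapsto \lambda a^m$) it satisfies $\|a^m\cdot e_1\|_{X_m(E_m)}\leq \|a^m\|$. Since $A$ is $(\mathbb{N}^d;X_1,\ldots,X_d;Y)$-summing, Proposition \ref{proposicao_operador_induzido} yields
\begin{equation*}
S := \left(A\left(x_{j_n^1}^1,\ldots,x_{j_n^d}^d\right)\right)_{n=1}^{\infty}\in Y(F),
\end{equation*}
with $\|S\|_{Y(F)}\leq \|A\|_{\mathbb{N}^d;X_1,\ldots,X_d;Y}\cdot \|a^m\|\cdot \prod_{r\neq m}\|(x_j^r)_{j=1}^{\infty}\|_{X_r(E_r)}$.

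Next I would examine $S$ coordinatewise. By the definition of $(n_k)_{k=1}^\infty$ via the set $B$ in \eqref{conjunto_B}, the condition $j_n^m = 1$ holds precisely when $n = n_k$ for some $k$; the choice of $(x_j^m)_{j=1}^\infty$ therefore forces
\begin{equation*}
A\left(x_{j_n^1}^1,\ldots,x_{j_n^d}^d\right) = \begin{cases} A_{a^m}\!\left(x_{j_{n_k}^1}^1,[\stackrel{m}{\ldots}],x_{j_{n_k}^d}^d\right), & \text{if } n = n_k, \\ 0, & \text{otherwise.} \end{cases}
\end{equation*}
Setting
\begin{equation*}
T := \left(A_{a^m}\!\left(x_{j_{n_k}^1}^1,[\stackrel{m}{\ldots}],x_{j_{n_k}^d}^d\right)\right)_{k=1}^{\infty},
\end{equation*}
one checks that $S$ and $T$ produce the same list of nonzero entries in the same order, i.e.\ their zerofree versions coincide: $S^0 = T^0$.

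Finally, applying the $0$-invariance of $Y$ twice yields $T\in Y(F)$ with $\|T\|_{Y(F)} = \|S^0\|_{Y(F)} = \|S\|_{Y(F)}$. Because $\mathbb{N}^{d-1}$ is, by construction, the block associated to the trivial partition and to the subsequences $(j_{n_k}^r)_{k=1}^\infty$ for $r\neq m$, this is precisely the statement that $A_{a^m}\in \mathcal{L}_{X_1,[\stackrel{m}{\ldots}],X_d;Y}^{\mathbb{N}^{d-1}}(E_1,[\stackrel{m}{\ldots}],E_d;F)$, and combining with the bound on $\|S\|_{Y(F)}$ gives the required norm inequality. I expect the main technical point to be the identification $S^0 = T^0$: care is needed because $T$ itself may contain zero entries whenever $A_{a^m}$ vanishes on a given tuple, so the equality is not that $T$ is the subsequence of nonzero entries of $S$, but rather that both sequences, after stripping zeros, yield the same sequence, which is exactly what $0$-invariance is tailored to exploit.
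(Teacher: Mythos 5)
Your argument is correct and is essentially the paper's own proof: the same auxiliary sequence $a^m\cdot e_1$ placed in the $m$-th slot, the same identification of the coordinates via the set $B$ of (\ref{conjunto_B}) and the indices $(n_k)_{k=1}^{\infty}$ (terms equal to $A_{a^m}(x_{j_{n_k}^1}^1,[\stackrel{m}{\ldots}],x_{j_{n_k}^d}^d)$ at $n=n_k$ and zero elsewhere), the same passage through zerofree versions and the $0$-invariance of $Y$, and an equivalent derivation of the norm inequality (you bound directly, the paper normalizes $a^m$ and takes suprema). The only cosmetic difference is that you justify $\| a^m\cdot e_1\|_{X_m(E_m)}\leq \| a^m\|$ by appealing to linear stability of $X_m$, which is not among the stated hypotheses, whereas the paper rests this normalization on \cite[Lemma 2.4]{botelho_transformation_2017}; both arguments use the same underlying fact, so this does not change the substance.
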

\begin{proof} Of course we can suppose $a^m \neq 0$. Let 
$(x_j^r)_{j=1}^{\infty}\in X_r(E_r)$, $r=1,[\stackrel{m}{\ldots}],d$ be given. Define
 $(x_j^m)_{j=1}^{\infty}:= a^m\cdot e_1 = (a^m, 0,0, \ldots) \in X_m(E_m) $. 
 By asumption,  
     $$Y(F) \ni \widehat{A}_{\mathbb{N}^d}\left( (x_j^1)_{j=1}^{\infty},\ldots,(x_j^d)_{j=1}^{\infty}\right) = \left( A(x_{j_n^1}^1,\ldots,x_{j_n^d}^d)\right)_{n=1}^{\infty}= \left( A_{x_{j_n^m}^m}(x_{j_n^1}^1,[\stackrel{m}{\ldots}],x_{j_n^d}^d)\right)_{n=1}^{\infty}.
    $$
Using the set $B$ defined in (\ref{conjunto_B}) and considering the corresponding increasing sequence $(n_k)_{k=1}^{ \infty}$ of natural numbers, note that:\\
 $\bullet$ For every $k\in\mathbb{N}$, $A_{x_{j_{n_k}^m}^m}(x_{j_n^1}^1,[\stackrel{m}{\ldots}],x_{j_n^d}^d) = A_{a^m}(x_{j_n^1}^1,[\stackrel{m}{\ldots}],x_{j_n^d}^d)$ as $j_{n_k}^m = 1$ and $x_1^m = a^m$.\\
$\bullet$ For every $n\in \mathbb{N}\backslash\{n_k: k\in\mathbb{N}\}$, $A_{x_{j_n^m}^m}(x_{j_n^1}^1,[\stackrel{m}{\ldots}],x_{j_n^d}^d) = 0$ as $j_n^m\neq 1$ and $x_j^m = 0$ for every $j\neq 1$.

Thus, definining $y=\left( A_{x_{j_n^m}^m}(x_{j_n^1}^1,[\stackrel{m}{\ldots}],x_{j_n^d}^d)\right)_{n=1}^{\infty}$ and $z =\left( A_{a^m}(x_{j_{n_k}^1}^1,[\stackrel{m}{\ldots}],x_{j_{n_k}^d}^d)\right)_{k=1}^{\infty} $, we have that $z$ is a subsequence of $y$ and all the terms of $y$ not belonging to $z$ are null. It follows that the $n$-th non-null coordinate of $z$ is equal to the $n$-th non-null coordinate of $y$, that is, $y^0 = z^0$. Since $Y$ is $0$-invariant,
    \begin{align*}
       y \in Y(F) \Longleftrightarrow y^0\in Y(F) \Longleftrightarrow z^0\in Y(F) \Longleftrightarrow z\in Y(F) \mbox{~and}
    \end{align*}
        \begin{align*}
       Y(F) \ni   & ~\Big\| \widehat{A}_{\mathbb{N}^d}\left( (x_j^1)_{j=1}^{\infty},\ldots,(x_j^d)_{j=1}^{\infty}\right)\Big\|_{Y(F)} = \| y \|_{Y(F)} = \| y^0 \|_{Y(F)} = \| z^0 \|_{Y(F)} = \| z \|_{Y(F)} \\
        &=\Big\| \left( A_{a^m}(x_{j_{n_k}^1}^1,[\stackrel{m}{\ldots}],x_{j_{n_k}^d}^d)\right)_{k=1}^{\infty}\Big\|_{Y(F)}    = \Big\| \left( A_{a^m}\right)_{\mathbb{N}^{d-1}}^{\widehat{}}\left( (x_j^1)_{j=1}^{\infty},[\stackrel{m}{\ldots}],(x_j^d)_{j=1}^{\infty}\right)\Big\|_{Y(F)}.
    \end{align*}
        This proves that $A_{a^m}\in \mathcal{L}_{X_1,[\stackrel{m}{\ldots}],X_d;Y}^{\mathbb{N}^{d-1}}(E_1,[\stackrel{m}{\ldots}],E_d;F)$ and, replacing $a^m$ with $b := \frac{a^m}{\| a^m\|}$, we get
    \begin{align*}
        &\| \widehat{A}_{\mathbb{N}^d}\|  = \sup_{\tiny{\begin{array}{c}
            (x^r_j)_{j=1}^{\infty}\in B_{X_{r}(E_r)} \nonumber\\
            r=1,\ldots,d
        \end{array}}} \Big\| \widehat{A}_{\mathbb{N}^d}\left( (x_j^1)_{j=1}^{\infty},\ldots,(x_j^d)_{j=1}^{\infty}\right)\Big\|_{Y(F)} \nonumber\\
        &\geq \sup_{\tiny{\begin{array}{c}(x^r_j)_{j=1}^{\infty}\in B_{X_{r}(E_r)} \nonumber\\ r=1,[\stackrel{m}{\ldots}],d \end{array}}} \Big\| \widehat{A}_{\mathbb{N}^d}\left( (x_j^1)_{j=1}^{\infty},\ldots,(x_j^{m-1})_{j=1}^{\infty},b\cdot e_1,(x_j^{m+1})_{j=1}^{\infty},(x_j^d)_{j=1}^{\infty}\right)\Big\|_{Y(F)}  \nonumber\\
        &=  \frac{1}{\| a^m\|}\sup_{\tiny{\begin{array}{c}(x^r_j)_{j=1}^{\infty}\in B_{X_{r}(E_r)} \nonumber\\ r=1,[\stackrel{m}{\ldots}],d \end{array}}} \Big\| \widehat{A}_{\mathbb{N}^d}\left( (x_j^1)_{j=1}^{\infty},\ldots,(x_j^{m-1})_{j=1}^{\infty},a^m\cdot e_1,(x_j^{m+1})_{j=1}^{\infty},(x_j^d)_{j=1}^{\infty}\right)\Big\|_{Y(F)} \nonumber\\
        &=  \frac{1}{\| a^m\|}\sup_{\tiny{\begin{array}{c}(x^r_j)_{j=1}^{\infty}\in B_{X_{r}(E_r)} \nonumber\\ r=1,[\stackrel{m}{\ldots}],d \end{array}}} \Big\| \left( A_{a^m}\right)_{\mathbb{N}^{d-1}}^{\widehat{}}\left( (x_j^1)_{j=1}^{\infty},[\stackrel{m}{\ldots}],(x_j^d)_{j=1}^{\infty}\right)\Big\|_{Y(F)} \nonumber= \frac{\| \left( A_{a^m}\right)_{\mathbb{N}^{d-1}}^{\widehat{}}\|}{\| a^m\|}, \label{eq.23}
    \end{align*}
from which it follows that 
    \begin{align*}
        \| A_{a^m}\|_{\mathbb{N}^{d-1};X_1,[\stackrel{m}{\ldots}],X_d;Y} = \| \left( A_{a^m}\right)_{\mathbb{N}^{d-1}}^{\widehat{}}\| \leq \| \widehat{A}_{\mathbb{N}^d}\|\cdot \| a^m\| = \| A\|_{\mathbb{N}^d;X_1,\ldots,X_d;Y}\cdot \| a^m\|,
    \end{align*}
\end{proof}

\begin{corollary}\label{corisot}
    Let $X_1,\ldots,X_d$ and $Y$ be sequence classes with $Y$ being $0$-invariant. If $\mathcal{L}_{X_1,\ldots,X_d;Y}^{\mathbb{N}^d}(E_1,\ldots,E_d;F) = \mathcal{L}(E_1,\ldots,E_d;F)$, then:\\
{\rm (a) }$\mathcal{L}_{X_1,[\stackrel{m}{\ldots}],X_d;Y}^{\mathbb{N}^{d-1}}(E_1,[\stackrel{m}{\ldots}],E_d;F) = \mathcal{L}(E_1,[\stackrel{m}{\ldots}],E_d;F)$  for every $m=1,\ldots,d$.\\
{\rm (b)} 
$\mathcal{L}_{X_r;Y}(E_r;F) = \mathcal{L}(E_r;F)$ for every $r=1,\ldots,d$.
\end{corollary}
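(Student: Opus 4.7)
For part~(a), the plan is to exhibit every continuous $(d-1)$-linear operator on $E_1 \times [\stackrel{m}{\ldots}] \times E_d$ as a slice $A_{a^m}$ of some $d$-linear operator $A$ to which the hypothesis applies, and then invoke Proposition~\ref{prop-decaida-isotropico}. Concretely, given $B \in \mathcal{L}(E_1, [\stackrel{m}{\ldots}], E_d; F)$, one may assume $E_m \neq \{0\}$ (otherwise the statement is trivial), fix $a^m \in E_m$ with $\|a^m\| = 1$ and, via Hahn--Banach, $\varphi \in E_m^*$ with $\varphi(a^m) = 1$, and define
\[
A \colon E_1 \times \cdots \times E_d \longrightarrow F, \quad A(x^1, \ldots, x^d) := \varphi(x^m)\,B(x^1, [\stackrel{m}{\ldots}], x^d).
\]
Then $A$ is continuous $d$-linear, hence $A \in \mathcal{L}^{\mathbb{N}^d}_{X_1, \ldots, X_d; Y}(E_1, \ldots, E_d; F)$ by hypothesis. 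Since $\varphi(a^m) = 1$ forces $A_{a^m} = B$, Proposition~\ref{prop-decaida-isotropico} yields $B = A_{a^m} \in \mathcal{L}^{\mathbb{N}^{d-1}}_{X_1, [\stackrel{m}{\ldots}], X_d; Y}(E_1, [\stackrel{m}{\ldots}], E_d; F)$, which is what (a) asserts.

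For part~(b), the plan is to iterate (a). Fix $r \in \{1, \ldots, d\}$ and suppress the indices of $\{1, \ldots, d\} \setminus \{r\}$ one at a time by repeatedly applying (a); after $d-1$ such reductions one obtains the coincidence $\mathcal{L}^{\mathbb{N}^1}_{X_r; Y}(E_r; F) = \mathcal{L}(E_r; F)$, the output of each step supplying the hypothesis of the next. What remains is to identify $\mathcal{L}^{\mathbb{N}^1}_{X_r; Y}(E_r; F)$ with $\mathcal{L}_{X_r; Y}(E_r; F)$, which is the $d=1$ instance of Proposition~\ref{cap3-prop20}: for $d=1$ the trivial partition consists of $\{1\}$, the block $\mathbb{N}^1$ reduces to the diagonal $D(\mathbb{N})$ under the identity bijection, and the condition of Definition~\ref{definicao-operadores somantes} becomes exactly the requirement that $(A(x_n))_{n=1}^\infty \in Y(F)$ for every $(x_j)_{j=1}^\infty \in X_r(E_r)$.

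No serious obstacle is expected: part~(a) collapses in one line to Proposition~\ref{prop-decaida-isotropico} once the Hahn--Banach extension is in place, and part~(b) is pure iteration coupled with the already-recorded identification of the one-variable class. The only item requiring a moment of care is checking that the realization of $\mathbb{N}^{d-1}$ as a block used in Proposition~\ref{prop-decaida-isotropico} transfers unambiguously to the next iteration step; this is automatic because each coincidence asserted along the way equals the \emph{entire} space $\mathcal{L}(\cdot)$ of continuous multilinear operators, which does not depend on the chosen bijection realizing the block.
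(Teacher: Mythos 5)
Your argument is essentially the paper's own proof: for (a) the paper likewise fixes a norm-one $y\in E_m$, takes a Hahn--Banach functional $\varphi\in E_m^*$ with $\varphi(y)=1$, defines $A(x^1,\ldots,x^d)=B(x^1,[\stackrel{m}{\ldots}],x^d)\cdot\varphi(x^m)$ so that $B=A_y$, and concludes via Proposition~\ref{prop-decaida-isotropico}, while for (b) it simply applies (a) repeatedly. Your additional remarks --- identifying $\mathcal{L}^{\mathbb{N}^1}_{X_r;Y}(E_r;F)$ with $\mathcal{L}_{X_r;Y}(E_r;F)$ through Proposition~\ref{cap3-prop20} and noting that the block realizations chain consistently across the iteration --- only make explicit details the paper leaves implicit.
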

\begin{proof} 
   (a) Given $B\in\mathcal{L}(E_1,[\stackrel{m}{\ldots}],E_d;F)$, choose $y\in E_m$ with $\| y\| = 1$ and take, by the Hahn-Banach Theorem, 
    $\varphi\in E_m^*$ such that $\| \varphi\|= 1$ and $\varphi(y) = \| y\| =1$. Considering the operator $A\in\mathcal{L}(E_1, \ldots, E_d;F)$ given by
    $A( x^1,\ldots,x^d) = B(x^1,[\stackrel{m}{\ldots}],x^d)\cdot \varphi(x^m)$, 
we have $B=A_y$. By assumption, $A\in \mathcal{L}_{X_1,\ldots,X_d;Y}^{\mathbb{N}^d}(E_1,\ldots,E_d;F)$, so Proposition  \ref{prop-decaida-isotropico} yields that $B=A_y \in \mathcal{L}_{X_1,[\stackrel{m}{\ldots}],X_d;Y}^{\mathbb{N}^{d-1}}(E_1,[\stackrel{m}{\ldots}],E_d;F).$ \\
(b) Just apply (a) repeteadly.   
\end{proof}


By $\Pi_{q;p}(E;F)$ we denote the space of absolutely $(q,p)$-summing linear operators from $E$ to $F$. Since the sequence class $\ell_q(\cdot)$ is $0$-invariante, we recover the following result from \cite{pellegrino_fully_2005} as a particular case of the corollary above. 
\begin{corollary}{\rm \cite[Corollary 3.4]{pellegrino_fully_2005}}
	Let $q,p_1, \ldots, p_d \geq 1$ and $E_1, \ldots, E_d, F$ be Banach spaces such that  $\mathcal{L}(E_1,\ldots,E_d;F) = \mathcal{L}_{\ell_{p_1}^w(\cdot),\ldots,\ell_{p_d}^w(\cdot);\ell_q(\cdot)}^{\mathbb{N}^d}(E_1,\ldots,E_d;F)$.  Then $\Pi_{q;p_r}(E_r;F) =\mathcal{L}(E_r;F)$ for every $r=1,\ldots,d$.
\end{corollary}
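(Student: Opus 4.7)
The plan is to deduce this corollary directly from Corollary \ref{corisot}(b) applied to the specific sequence classes $X_r = \ell_{p_r}^w(\cdot)$, $r = 1,\ldots,d$, and $Y = \ell_q(\cdot)$. The hypothesis of that corollary requires $Y$ to be $0$-invariant, so the first thing to verify is that $\ell_q(\cdot)$ is $0$-invariant. This is immediate: deleting the zero coordinates of a sequence does not change its $\ell_q$-norm, since $\sum_j \|x_j\|^q = \sum_j \|x_j^0\|^q$, and membership in $\ell_q(E)$ is preserved both ways.

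With that in hand, Corollary \ref{corisot}(b) gives, under the standing hypothesis $\mathcal{L}(E_1,\ldots,E_d;F) = \mathcal{L}_{\ell_{p_1}^w(\cdot),\ldots,\ell_{p_d}^w(\cdot);\ell_q(\cdot)}^{\mathbb{N}^d}(E_1,\ldots,E_d;F)$, the coincidence $\mathcal{L}_{\ell_{p_r}^w(\cdot);\ell_q(\cdot)}(E_r;F) = \mathcal{L}(E_r;F)$ for every $r=1,\ldots,d$. The only remaining step is to identify, in the linear case ($d=1$), the class $\mathcal{L}_{\ell_{p_r}^w(\cdot);\ell_q(\cdot)}(E_r;F)$ with the classical Pietsch space $\Pi_{q;p_r}(E_r;F)$. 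This identification is the standard sequential characterization of absolutely $(q;p_r)$-summing linear operators: $u\in\mathcal{L}(E_r;F)$ is absolutely $(q;p_r)$-summing if and only if it maps weakly $p_r$-summable sequences in $E_r$ to absolutely $q$-summable sequences in $F$, which is precisely the defining condition of $\mathcal{L}_{\ell_{p_r}^w(\cdot);\ell_q(\cdot)}(E_r;F)$.

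I do not anticipate any substantive obstacle here: the heavy lifting was done in Proposition \ref{prop-decaida-isotropico} and its Corollary \ref{corisot}. The only genuine check is the $0$-invariance of $\ell_q(\cdot)$, which is routine, and the recognition of $\Pi_{q;p_r}$ as the linear instance of the $(X;Y)$-summing classes, which is just the classical definition translated into the sequence-class language introduced in \cite{botelho_transformation_2017}. The whole argument should therefore fit in a couple of lines.
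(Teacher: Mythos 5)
Your argument is exactly the paper's: note that $\ell_q(\cdot)$ is $0$-invariant, apply Corollary \ref{corisot}(b) with $X_r=\ell_{p_r}^w(\cdot)$ and $Y=\ell_q(\cdot)$, and identify the linear class $\mathcal{L}_{\ell_{p_r}^w(\cdot);\ell_q(\cdot)}(E_r;F)$ with $\Pi_{q;p_r}(E_r;F)$ via the classical sequential characterization of absolutely $(q;p_r)$-summing operators. The proposal is correct and coincides with the paper's proof.
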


\subsection{The anisotropic case}
\label{aplicacoes-caso-anisotrópico}
In this subsection we consider $\mathbb{N}^d$ as the block associated to the discrete partition of $\{1, \ldots, d\}$ and to the sequences 
$(j_n^1)_{n=1}^{\infty} = \cdots = (j_n^d)_{n=1}^{\infty} = (n)_{n=1}^{\infty}$. In the same fashion, $\mathbb{N}^{d-1}$ shall be regarded as the block associated to the discrete partition of $\{1,\ldots,d-1\}$ and to the sequences $(j_n^1)_{n=1}^{\infty} = \cdots = (j_n^{d-1})_{n=1}^{\infty} = (n)_{n=1}^{\infty}$.

\begin{proposition}\label{prop_coerencia_anisotropico}
     Let $X_1,\ldots,X_d$ and $Y$ be sequence classes with $Y$ linearly stable, $E_1, \ldots,$ $E_d$, $F$ be     Banach spaces and $a\in E_1$ be given. If $A\in\mathcal{L}_{X_1,\ldots,X_d;^dY}^{\mathbb{N}^d}(E_1,\ldots,E_d;F)$, then $A_a\in \mathcal{L}_{X_2,\ldots,X_d;^{d-1}Y}^{\mathbb{N}^{d-1}}(E_2,\ldots,E_d;F)$ and $$\| A_a\|_{\mathbb{N}^{d-1};X_2,\ldots,X_d;^{d-1}Y} \leq \| A\|_{\mathbb{N}^d;X_1,\ldots,X_d;^dY} \cdot \| a\|.$$
\end{proposition}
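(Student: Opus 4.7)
The plan is to mimic closely the proof of Proposition \ref{prop-decaida-isotropico}, exploiting the fact that in the discrete partition case the iterated sequence $\widehat{A}_{\mathbb{N}^d}(\cdots)$ cleanly separates the outermost index from the remaining $d-1$ indices. First, I would put $(x_j^1)_{j=1}^\infty := a\cdot e_1 = (a,0,0,\ldots)$, which lies in $c_{00}(E_1)\subset X_1(E_1)$, and take arbitrary $(x_j^r)_{j=1}^\infty\in X_r(E_r)$ for $r=2,\ldots,d$. Since we are in the discrete-partition case (so $k=d$ and, by Example \ref{exemplo_expressao_a}, $\sum_{s=1}^d\sum_{r\in I_s} x_{j_{n_s}^r}^r *e_r = (x_{n_1}^1,\ldots,x_{n_d}^d)$), the hypothesis $A\in\mathcal{L}_{X_1,\ldots,X_d;{}^dY}^{\mathbb{N}^d}(E_1,\ldots,E_d;F)$ yields
\begin{align*}
\widehat{A}_{\mathbb{N}^d}\!\left((x_j^1)_{j=1}^\infty,\ldots,(x_j^d)_{j=1}^\infty\right)=\left(\cdots\left(A(x_{n_1}^1,x_{n_2}^2,\ldots,x_{n_d}^d)\right)_{n_d=1}^\infty\cdots\right)_{n_1=1}^\infty\in Y(\cdots Y(F)\cdots).
\end{align*}

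The decisive observation is that $x_{n_1}^1=a$ when $n_1=1$ and $x_{n_1}^1=0$ otherwise, so by the $d$-linearity of $A$ the outermost sequence (indexed by $n_1$) has all entries equal to $0$ except at $n_1=1$, where it equals precisely $(A_a)^{\widehat{}}_{\mathbb{N}^{d-1}}\!\left((x_j^2)_{j=1}^\infty,\ldots,(x_j^d)_{j=1}^\infty\right)$. In other words,
\begin{align*}
\widehat{A}_{\mathbb{N}^d}\!\left((x_j^1)_{j=1}^\infty,\ldots,(x_j^d)_{j=1}^\infty\right)=(A_a)^{\widehat{}}_{\mathbb{N}^{d-1}}\!\left((x_j^2)_{j=1}^\infty,\ldots,(x_j^d)_{j=1}^\infty\right)\cdot e_1,
\end{align*}
where the $\cdot e_1$ is taken inside the outermost copy of $Y$ (whose entries are themselves elements of $Y(\cdots Y(F)\cdots)$ with $d-1$ iterations).

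Next, I invoke \cite[Lemma 2.4]{botelho_transformation_2017}, applied to the outermost sequence class $Y$ (the linear stability hypothesis on $Y$ is used here): for any $y$ in the $(d-1)$-iterated space $Y(\cdots Y(F)\cdots)$, the element $y\cdot e_1$ lies in the $d$-iterated space $Y(Y(\cdots Y(F)\cdots))$ and $\|y\cdot e_1\|=\|y\|$. This immediately shows that $(A_a)^{\widehat{}}_{\mathbb{N}^{d-1}}\!\left((x_j^2)_{j=1}^\infty,\ldots,(x_j^d)_{j=1}^\infty\right)$ belongs to $Y(\cdots Y(F)\cdots)$ with $d-1$ iterations, hence $A_a\in \mathcal{L}_{X_2,\ldots,X_d;{}^{d-1}Y}^{\mathbb{N}^{d-1}}(E_2,\ldots,E_d;F)$.

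For the norm bound I would just estimate
\begin{align*}
\bigl\|(A_a)^{\widehat{}}_{\mathbb{N}^{d-1}}\!\left((x_j^2),\ldots,(x_j^d)\right)\bigr\|_{Y^{d-1}(F)}&=\bigl\|\widehat{A}_{\mathbb{N}^d}(a\cdot e_1,(x_j^2),\ldots,(x_j^d))\bigr\|_{Y^d(F)}\\
&\leq \|\widehat{A}_{\mathbb{N}^d}\|\cdot\|a\cdot e_1\|_{X_1(E_1)}\cdot\prod_{r=2}^d\|(x_j^r)_{j=1}^\infty\|_{X_r(E_r)},
\end{align*}
and then use $\|a\cdot e_1\|_{X_1(E_1)}\leq\|a\|$ (the same fact that is tacitly employed in Lemma \ref{proposicao_desigualdade_normas}, via the normalization $\|e_1\|_{X_1(\mathbb{K})}=1$) to obtain $\|A_a\|_{\mathbb{N}^{d-1};X_2,\ldots,X_d;{}^{d-1}Y}\leq\|A\|_{\mathbb{N}^d;X_1,\ldots,X_d;{}^dY}\cdot\|a\|$. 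The only real subtlety is bookkeeping: one must be careful to apply Lemma 2.4 to the \emph{outermost} copy of $Y$ (not an inner one), because the zero-extension trick kills exactly the outermost index; once this is kept straight, the argument is routine.
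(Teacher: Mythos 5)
Your proof is correct and follows essentially the same route as the paper's: the same test sequence $(x_j^1)_{j=1}^{\infty}=a\cdot e_1$, the same identification of $\widehat{A}_{\mathbb{N}^d}\bigl(a\cdot e_1,(x_j^2)_{j=1}^{\infty},\ldots,(x_j^d)_{j=1}^{\infty}\bigr)$ with $\left( A_a\right)_{\mathbb{N}^{d-1}}^{\widehat{}}\bigl((x_j^2)_{j=1}^{\infty},\ldots,(x_j^d)_{j=1}^{\infty}\bigr)\cdot e_1$, and the same appeal to \cite[Lemma 2.4]{botelho_transformation_2017} for the norm identity, with your direct estimate via $\|a\cdot e_1\|_{X_1(E_1)}\le\|a\|$ being just a rearrangement of the paper's supremum argument with $a/\|a\|$. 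One small wording slip: membership of the inner $(d-1)$-iterated sequence in $Y(\stackrel{d-1}{\cdots}Y(F)\cdots)$ does not come from Lemma 2.4 (which goes in the opposite direction, from $y$ to $y\cdot e_1$) but, as the paper notes, simply from the fact that the coordinates of an element of $Y(\stackrel{d}{\cdots}Y(F)\cdots)$ are by definition elements of $Y(\stackrel{d-1}{\cdots}Y(F)\cdots)$ — a trivial correction that does not affect your argument.
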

\begin{proof} Of course we can suppose $a \neq 0$.
Let the sequences $(x_j^2)_{j=1}^{\infty}\in X_2(E_2),\ldots,$ $(x_j^d)_{j=1}^{\infty}\in X_d(E_d)$ be given. 
Consider the sequence $(x_j^1)_{j=1}^{\infty} := a\cdot e_1  = (a, 0, 0, \ldots)\in X_1(E_1)$. 
By the definition of  $\mathcal{L}_{X_1,\ldots,X_d;^dY}^{\mathbb{N}^d}(E_1,\ldots,E_d;F)$,
    \begin{align*}
        Y(\stackrel{d}{\cdots} Y(F)\cdots)&\ni \widehat{A}_{\mathbb{N}^d} \left( (x_j^1)_{j=1}^{\infty},\ldots,(x_j^d)_{j=1}^{\infty}\right) = \left(\cdots\left( A(x_{n_1}^1,\ldots,x_{n_d}^d)\right)_{n_d=1}^{\infty}\cdots\right)_{n_1=1}^{\infty} \\
        &= \left(\left(\cdots\left( A(a, x_{n_2}^2,\ldots,x_{n_d}^d) \right)_{n_d=1}^{\infty}\cdots\right)_{n_2=1}^{\infty},0,0,0,\ldots\right)\\
        &= \left(\left(\cdots\left( A_{a}(x_{n_2}^2,\ldots,x_{n_d}^d) \right)_{n_d=1}^{\infty}\cdots\right)_{n_2=1}^{\infty},0,0,0,\ldots\right)\\
        &= \left(\cdots\left( A_{a}(x_{n_2}^2,\ldots,x_{n_d}^d) \right)_{n_d=1}^{\infty}\cdots\right)_{n_2=1}^{\infty}\cdot e_1.
    \end{align*}
Since the coordinates of a sequence in $Y(\stackrel{d}{\cdots} Y(F)\cdots)$ belong to $Y(\stackrel{d-1}{\cdots} Y(F)\cdots)$, we have
    \begin{align*}
        \left(\cdots\left( A_a(x_{n_2}^2,\cdots,x_{n_d}^d) \right)_{n_d=1}^{\infty}\ldots\right)_{n_2=1}^{\infty}\in Y(\stackrel{d-1}{\cdots} Y(F)\cdots),
    \end{align*}
which gives $A_a\in \mathcal{L}_{X_2,\ldots,X_d;^{d-1}Y}^{\mathbb{N}^{d-1}}(E_2,\ldots,E_d;F)$. By \cite[Lemma 2.4]{{botelho_transformation_2017}}, $\| y\cdot e_1 \|_{Y(F)} = \| y\|_F$ for every $y\in F$, therefore
    \begin{align*}
        \Big\| \widehat{A}_{\mathbb{N}^d}& \left( a\cdot e_1, (x_j^2)_{j=1}^{\infty},\ldots,(x_j^d)_{j=1}^{\infty}\right)\Big\| = \Big\| \left(\cdots\left( A_{a}(x_{n_2}^2,\cdots,x_{n_d}^d) \right)_{n_d=1}^{\infty}\ldots\right)_{n_2=1}^{\infty}\cdot e_1\Big\|  \\
        &= \Big\| \left(\cdots\left( A_a(x_{n_2}^2,\ldots,x_{n_d}^d) \right)_{n_d=1}^{\infty}\cdots\right)_{n_2=1}^{\infty}\Big\|= \Big\| \left( A_a\right)_{\mathbb{N}^{d-1}}^{\widehat{}}\left( (x_j^2)_{j=1}^{\infty},\ldots,(x_j^d)_{j=1}^{\infty}\right)\Big\|
    \end{align*}
for all $(x_j^r)_{j=1}^{\infty}\in X_r(E_r)$, $r\in\{2,\ldots,d\}$. 
Applying this equality to $\frac{a}{\| a\|}$, we get     \begin{align*}
    \| A\|_{\mathbb{N}^d;X_1,\ldots,X_d;Y}&=     \| \widehat{A}_{\mathbb{N}^d}\|= \sup_{\tiny{\begin{array}{c} (x_j^r)_{j=1}^{\infty}\in B_{X_r(E_r)}\\ r\in \{1,\ldots,d\}        \end{array}}} \Big\| \widehat{A}_{\mathbb{N}^d}\left( (x_j^1)_{j=1}^{\infty},\ldots,(x_j^d)_{j=1}^{\infty}\right)\Big\|\\
        &\geq \sup_{\tiny{\begin{array}{c} (x_j^r)_{j=1}^{\infty}\in B_{X_r(E_r)}\\ r\in \{2,\ldots,d\}        \end{array}}} \Big\| \widehat{A}_{\mathbb{N}^d}\left( \frac{a}{\| a\|}\cdot e_1,(x_j^2)_{j=1}^{\infty},\ldots,(x_j^d)_{j=1}^{\infty}\right)\Big\| \\
        &= \frac{1}{\| a\|}\sup_{\tiny{\begin{array}{c} (x_j^r)_{j=1}^{\infty}\in B_{X_r(E_r)}\\ r\in \{2,\ldots,d\}        \end{array}}} \Big\| \widehat{A}_{\mathbb{N}^d}\left( a\cdot e_1,(x_j^2)_{j=1}^{\infty},\ldots,(x_j^d)_{j=1}^{\infty}\right)\Big\| \\
         &= \frac{1}{\| a\|}\sup_{\tiny{\begin{array}{c} (x_j^r)_{j=1}^{\infty}\in B_{X_r(E_r)}\\ r\in \{2,\ldots,d\}        \end{array}}} \Big\| \left( A_a\right)_{\mathbb{N}^{d-1}}^{\widehat{}}\left( (x_j^2)_{j=1}^{\infty},\ldots,(x_j^d)_{j=1}^{\infty}\right)\Big\| \\
         &= \frac{\| \left( A_a\right)_{\mathbb{N}^{d-1}}^{\widehat{}}\|  }{\| a\|} = \frac{\| A_a\|_{\mathbb{N}^{d-1};X_2,\ldots,X_d;Y}}{\|a\|}.
    \end{align*}
\end{proof}

Coherence is usually applied for classes of polynomials ideals, which are closely connected to symmetric multilinear operators. Let us see that, in the result above, if the operator is symmetric then we can fix any coordinate, and not only the first one. We assume that symmetric multilinear operators are familiar to the reader.



For $A\in\mathcal{L}(^dE;F)$, $a \in E$ and $m \in \{1, \ldots, d\}$, by $A_a^m$ we mean the operator
\begin{align*}
    A_a^m \colon E^{d-1}\longrightarrow F~,~ A_a^m(x^1,[\stackrel{m}{\ldots}],x^d) = A(x^1,\ldots, x^{m-1},a,x^{m+1},\ldots,x^d).
\end{align*}

\begin{corollary}
     Let $X$ and $Y$ be sequence with $Y$ linearly stable, $a\in E$ and $m\in\{1,\ldots,d\}$. If $A\in \mathcal{L}_{^dX;^dY}^{\mathbb{N}^d}(^dE;F)$ is a symmetric operator, then $A_a^m\in \mathcal{L}_{^{d-1}X;^{d-1}Y}^{\mathbb{N}^{d-1}}(^{d-1}E;F)$ and
    $$\| A_a^m\|_{\mathbb{N}^{d-1};^{d-1}X;^{d-1}Y} \leq \| A\|_{\mathbb{N}^d;^dX;^dY} \cdot \| a\|.$$
    \end{corollary}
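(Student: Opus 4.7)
The plan is to reduce the statement to the case $m=1$, which is exactly Proposition \ref{prop_coerencia_anisotropico}, by exploiting the symmetry of $A$ together with the fact that, since all input sequence classes are equal to $X$ and all output sequence classes are equal to $Y$, the class $\mathcal{L}_{^{d-1}X;\,^{d-1}Y}^{\mathbb{N}^{d-1}}(^{d-1}E;F)$ is invariant under permutations of the $d-1$ variables.

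First I would make the symmetric identification explicit. Given $(x^1,[\stackrel{m}{\ldots}],x^d)\in E^{d-1}$, set $y^r=x^r$ for $r<m$ and $y^r=x^{r+1}$ for $m\le r\le d-1$. Because $A$ is symmetric,
\begin{align*}
A_a^m(x^1,[\stackrel{m}{\ldots}],x^d) &= A(x^1,\ldots,x^{m-1},a,x^{m+1},\ldots,x^d) \\
&= A(a,y^1,\ldots,y^{d-1}) = A_a(y^1,\ldots,y^{d-1}),
\end{align*}
so $A_a^m$ agrees with $A_a=A_a^1$ after the relabeling $(x^1,[\stackrel{m}{\ldots}],x^d)\mapsto (y^1,\ldots,y^{d-1})$.

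Second, I would apply Proposition \ref{prop_coerencia_anisotropico} to the symmetric operator $A$ with its first variable fixed at $a$: this yields $A_a\in\mathcal{L}_{^{d-1}X;\,^{d-1}Y}^{\mathbb{N}^{d-1}}(^{d-1}E;F)$ together with the norm estimate $\|A_a\|_{\mathbb{N}^{d-1};\,^{d-1}X;\,^{d-1}Y}\le\|A\|_{\mathbb{N}^d;\,^dX;\,^dY}\cdot\|a\|$.

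Third, I would transfer this information back to $A_a^m$. For sequences $(x_j^r)_{j=1}^\infty\in X(E)$ with $r=1,[\stackrel{m}{\ldots}],d$, the same relabeling produces sequences $(y_j^{r'})_{j=1}^\infty\in X(E)$ with $r'=1,\ldots,d-1$, having identical norms in $X(E)$. Replacing each $x$ by the corresponding $y$ term by term inside the formula for $\widehat{(A_a^m)}_{\mathbb{N}^{d-1}}$ and using the identification in the first step yields
\begin{equation*}
\widehat{(A_a^m)}_{\mathbb{N}^{d-1}}\bigl((x_j^1)_j,[\stackrel{m}{\ldots}],(x_j^d)_j\bigr)
=\widehat{(A_a)}_{\mathbb{N}^{d-1}}\bigl((y_j^1)_j,\ldots,(y_j^{d-1})_j\bigr),
\end{equation*}
where the iterated index structure is preserved because all $d-1$ factors $Y$ appearing in $Y(\cdots Y(F)\cdots)$ are the same. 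This shows $A_a^m\in\mathcal{L}_{^{d-1}X;\,^{d-1}Y}^{\mathbb{N}^{d-1}}(^{d-1}E;F)$ and, taking the supremum over unit balls on both sides, $\|A_a^m\|_{\mathbb{N}^{d-1};\,^{d-1}X;\,^{d-1}Y}=\|A_a\|_{\mathbb{N}^{d-1};\,^{d-1}X;\,^{d-1}Y}$, which combined with the estimate from Proposition \ref{prop_coerencia_anisotropico} gives the desired inequality.

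The only delicate point, which is more notational than mathematical, is to keep the iterated-index bookkeeping consistent during the relabeling so that the $d-1$ iterated $Y$-norms match on the nose; this is the reason the argument requires all $X_r$'s and all $Y_s$'s to coincide. No further estimate is needed.
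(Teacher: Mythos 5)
Your proposal is correct and takes essentially the same route as the paper: the paper also uses the symmetry of $A$ (via an explicit permutation moving the $m$-th slot to the first) to conclude $A_a^m=A_a^1=A_a$ and then applies Proposition \ref{prop_coerencia_anisotropico}. Your extra bookkeeping in the transfer step is harmless but essentially redundant, since your relabeling is order-preserving and so $A_a^m$ and $A_a$ are literally the same operator on $E^{d-1}$, making the equality of induced maps and norms immediate.
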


\begin{proof} Considering the permutation $\alpha \colon \{1,\ldots,d\}\longrightarrow \{1,\ldots,d\}$ given by
    \begin{align*}
        \alpha(r) := \begin{cases} m&, \mbox{ if } r=1\\ r-1&, \mbox{ if } r=2,\ldots,m \\ r&, \mbox{ if } r\geq m,
    \end{cases}
    \end{align*}
the symmetry of $A$ implies that
$A_a^m = A_a^1$. The result follows from Proposition  \ref{prop_coerencia_anisotropico}.
\end{proof}

We finish the paper with some coincidence results, which follow from Proposition \ref{prop_coerencia_anisotropico} reasoning similarly to the proof of Corollary \ref{corisot}.

\begin{corollary}
    Let $X_1,\ldots,X_d$ and $Y$ be sequence classes with $Y$ linearly stable and $E_1, \ldots, E_d, F$ be Banach spaces. If $\mathcal{L}(E_1,\ldots,E_d;F) = \mathcal{L}_{X_1,\ldots,X_d;^dY}^{\mathbb{N}^d}(E_1,\ldots,E_d;F)$, then $\mathcal{L}(E_2,\ldots,E_d;F) = \mathcal{L}_{X_2,\ldots,X_d;^{d-1}Y}^{\mathbb{N}^{d-1}}(E_2,\ldots,E_d;F)$. 
\end{corollary}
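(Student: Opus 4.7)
The plan is to mimic the strategy used in the proof of Corollary \ref{corisot}(a), but now invoke the anisotropic coherence result Proposition \ref{prop_coerencia_anisotropico} instead of the isotropic one. Concretely, given an arbitrary $B\in\mathcal{L}(E_2,\ldots,E_d;F)$, the idea is to manufacture a $d$-linear operator $A\in\mathcal{L}(E_1,\ldots,E_d;F)$ and a vector $a\in E_1$ such that $A_a=B$, and then let the hypothesis together with Proposition \ref{prop_coerencia_anisotropico} do the work.

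First I would pick any $a\in E_1$ with $\|a\|=1$ and, by the Hahn--Banach Theorem, choose $\varphi\in E_1^{*}$ with $\|\varphi\|=1$ and $\varphi(a)=1$. Then I define
\begin{equation*}
A\colon E_1\times\cdots\times E_d\longrightarrow F,\qquad A(x^1,\ldots,x^d):=\varphi(x^1)\cdot B(x^2,\ldots,x^d),
\end{equation*}
which is plainly continuous $d$-linear. A direct evaluation gives
\begin{equation*}
A_a(x^2,\ldots,x^d)=A(a,x^2,\ldots,x^d)=\varphi(a)\,B(x^2,\ldots,x^d)=B(x^2,\ldots,x^d),
\end{equation*}
so $A_a=B$.

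Next I apply the standing hypothesis, which gives $A\in \mathcal{L}_{X_1,\ldots,X_d;{}^dY}^{\mathbb{N}^d}(E_1,\ldots,E_d;F)$, and then Proposition \ref{prop_coerencia_anisotropico} applied with $m=1$ to conclude that
\begin{equation*}
B=A_a\in \mathcal{L}_{X_2,\ldots,X_d;{}^{d-1}Y}^{\mathbb{N}^{d-1}}(E_2,\ldots,E_d;F),
\end{equation*}
with the norm estimate $\|B\|_{\mathbb{N}^{d-1};X_2,\ldots,X_d;{}^{d-1}Y}\le \|A\|_{\mathbb{N}^d;X_1,\ldots,X_d;{}^dY}\cdot\|a\|$. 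Since $B$ was arbitrary, this establishes the inclusion $\mathcal{L}(E_2,\ldots,E_d;F)\subseteq \mathcal{L}_{X_2,\ldots,X_d;{}^{d-1}Y}^{\mathbb{N}^{d-1}}(E_2,\ldots,E_d;F)$; the reverse inclusion is automatic from the definition.

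There is no real obstacle here once Proposition \ref{prop_coerencia_anisotropico} is in hand: the only subtlety worth flagging is that the coherence proposition privileges the first coordinate (it slides $a$ into the slot $m=1$), but this causes no difficulty because the ad hoc operator $A$ was tailor-made so that $a$ enters precisely through the first factor via $\varphi$. This is exactly why no symmetry hypothesis on the sequence classes or on $A$ is needed.
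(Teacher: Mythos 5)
Your argument is correct and is essentially the paper's own: the paper proves this corollary by "reasoning similarly to the proof of Corollary \ref{corisot}", i.e.\ by manufacturing $A(x^1,\ldots,x^d)=\varphi(x^1)\,B(x^2,\ldots,x^d)$ with $\varphi(a)=1$, noting $A_a=B$, and applying Proposition \ref{prop_coerencia_anisotropico}, exactly as you do. Your remark that the first-coordinate restriction of the proposition causes no loss here is also accurate.
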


\begin{corollary}
    Let $X, Y$ be sequence classes with $Y$ linearly stable and $E,F$ be Banach spaces. If $\mathcal{L}(^dE;F) = \mathcal{L}_{^dX;^dY}^{\mathbb{N}^d}(^dE;F)$ for some $d \in \mathbb{N}$, then $\mathcal{L}(E;F) = \mathcal{L}_{X;Y}(E;F)$.
\end{corollary}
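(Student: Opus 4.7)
The argument is a direct iteration of Proposition \ref{prop_coerencia_anisotropico}, in the spirit of the proof of Corollary \ref{corisot}(a). Given $T \in \mathcal{L}(E;F)$, the plan is to build a $d$-linear operator that degenerates to $T$ once $d-1$ coordinates are fixed. Using the Hahn-Banach theorem I would choose $y \in E$ with $\|y\|=1$ and $\varphi \in E^*$ with $\|\varphi\| = \varphi(y) = 1$, and then define
\[
A \colon E^d \longrightarrow F, \quad A(x^1,\ldots,x^d) = \varphi(x^1) \cdots \varphi(x^{d-1})\, T(x^d),
\]
which clearly lies in $\mathcal{L}(^dE;F)$. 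By the standing hypothesis, $A$ belongs to $\mathcal{L}_{^dX;^dY}^{\mathbb{N}^d}(^dE;F)$.

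Next I would apply Proposition \ref{prop_coerencia_anisotropico} with $a = y$, obtaining $A_y \in \mathcal{L}_{^{d-1}X;^{d-1}Y}^{\mathbb{N}^{d-1}}(^{d-1}E;F)$; by construction $A_y(x^2,\ldots,x^d) = \varphi(x^2)\cdots\varphi(x^{d-1})\, T(x^d)$. Iterating this "fix the first coordinate to $y$" operation $d-1$ times, each step being legal since the linear stability of $Y$ is preserved when passing from $^{k}Y$ to $^{k-1}Y$, the resulting operator is exactly $T$ and it lies in $\mathcal{L}_{X;Y}^{\mathbb{N}^1}(E;F)$. Since the trivial partition of $\{1\}$ together with the sequence $(n)_{n=1}^{\infty}$ yields the block $\mathbb{N}$, Definition \ref{definicao-operadores somantes} in the case $d=1$ (equivalently Proposition \ref{cap3-prop20} with one factor) identifies $\mathcal{L}_{X;Y}^{\mathbb{N}^1}(E;F)$ with the usual class $\mathcal{L}_{X;Y}(E;F)$, so $T \in \mathcal{L}_{X;Y}(E;F)$.

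Since the reverse inclusion $\mathcal{L}_{X;Y}(E;F) \subseteq \mathcal{L}(E;F)$ holds by definition, the equality $\mathcal{L}(E;F) = \mathcal{L}_{X;Y}(E;F)$ follows. The only point requiring care is the bookkeeping of the iteration: verifying that at each step the partially evaluated operator still satisfies the hypothesis needed to invoke Proposition \ref{prop_coerencia_anisotropico}. This, however, is automatic from the conclusion of the previous step and from the fact that linear stability of $Y$ is a hypothesis independent of the arity. I do not foresee any genuine obstacle; this is essentially the anisotropic analogue of Corollary \ref{corisot}(b), combined with the identification of the $d=1$ block-summing class with the usual $(X;Y)$-summing class.
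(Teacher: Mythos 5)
Your proof is correct and follows essentially the same route the paper intends: tensoring $T$ with Hahn--Banach functionals to produce a $d$-linear operator in the coincidence class and then applying Proposition \ref{prop_coerencia_anisotropico} repeatedly to peel off coordinates, exactly in the spirit of the proof of Corollary \ref{corisot}. The only organizational difference is that you iterate the proposition on a single operator built from $T$ rather than iterating the intermediate coincidence statements, which changes nothing of substance.
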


\bigskip

\noindent Faculdade de Matem\'atica~~~~~~~~~~~~~~~~~~~~~~Departamento de Matem\'atica\\
Universidade Federal de Uberl\^andia~~~~~~~~ IMECC-UNICAMP\\
38.400-902 -- Uberl\^andia -- Brazil~~~~~~~~~~~~ 13.083-859 -- Campinas -- Brazil\\
e-mail: botelho@ufu.br ~~~~~~~~~~~~~~~~~~~~~~~~~e-mail: davidson.freitas@ifgoiano.edu.br


\begin{thebibliography}{99}\footnotesize

\vspace*{-0.5em}
    \bibitem{achour_factorization_2014} Achour, D., Dahia, E., Rueda, P. and Sánchez Pérez, E. A. {\it Factorization of strongly $(p,\sigma)$-continuous multilinear operators}. Linear Multilinear Algebra {\bf 62} (2014), no. 12, 1649–1670.

\vspace*{-0.5em}
    \bibitem{achour_cohen_2007} Achour, D. and Mezrag, L. {\it On the Cohen strongly $p$-summing multilinear operators}. J.  Math. Anal. Appl. {\bf 327} (2007), no. 1, 550–563.

\vspace*{-0.5em}
    \bibitem{albuquerque_summability_2018} Albuquerque, N., Araújo, G., Cavalcante, W., Nogueira, T., Núñez, D., Pellegrino, D. and Rueda, P. {\it On summability of multilinear operators and applications.} Ann. Funct. Anal. {\bf 9} (2018), no. 4, 574–590.


\vspace*{-0.5em}
    \bibitem{albuquerque_note_2019} Albuquerque, N., Araújo, G., Pellegrino, D. and Rueda, P. {\it A note on multiple summing operators and applications.} Linear Multilinear Algebra  {\bf 67} (2019), no. 4, 660–671.

\vspace*{-0.5em}
    \bibitem{albuquerque_holders_2017a} Albuquerque, N., Araújo, G., Pellegrino, D. and Seoane-Sepúlveda, J. B. {\it Hölder’s inequality: some recent and unexpected applications}.  Bull. Belg. Math. Soc. Simon Stevin {\bf 24} (2017), no. 2, 199–225.

\vspace*{-0.5em}
    \bibitem{albuquerque_summability_2019} Albuquerque, N., Araújo, G., Rezende, L. and Santos, J. {\it A summability principle and applications}. arXiv:1904.04549 [math], 2019.

\vspace*{-0.5em}
    \bibitem{albuquerque_sharp_2014} Albuquerque, N., Bayart, F., Pellegrino, D.  and Seoane-Sepúlveda, J. B. {\it Sharp generalizations of the multilinear Bohnenblust-Hille inequality}. J. Funct. Anal. {\bf 266} (2014), no. 6, 3726–3740.

\vspace*{-0.5em}
    \bibitem{albuquerque_optimal_2016} Albuquerque, N., Bayart, F., Pellegrino, D. and Seoane-Sepúlveda, J. B. {\it Optimal Hardy-Littlewood type inequalities for polynomials and multilinear operators}. Israel J. Math. {\bf  211}  (2016), no. 1, 197–220.

\vspace*{-0.5em}
    \bibitem{albuquerque_applications_2017} Albuquerque, N., Nogueira, T., Núñez-Alarcón, D., Pellegrino, D. and Rueda, P. {\it Some applications of the Hölder inequality for mixed sums.} Positivity {\bf 21} (2017), no. 4, 1575–1592.

\vspace*{-0.5em}
    \bibitem{albuquerque_absolutely_2015} Albuquerque, N., Núñez-Alarcón, D., Santos, J. and Serrano-Rodríguez, D. M. {\it Absolutely summing multilinear operators via interpolation.} J. Funct. Anal. {\bf 269} (2015), no. 6, 1636–1651.

\vspace*{-0.5em}
    \bibitem{albuquerque_anisotropic_2018} Albuquerque, N. and Rezende, L. {\it Anisotropic regularity principle in sequence spaces and applications.} Commun. Contemp. Math. {\bf 20} (2018), no. 7, 1750087, 14 pp.

\vspace*{-0.5em}
    \bibitem{alencar_some_1989} Alencar, R. and Matos, M. { \it Some classes of multilinear mappings between Banach spaces.} Publicaciones Departamento Análisis Matemático, Unviversidad Complutense Madrid, 12 (1989).

\vspace*{-0.5em}
    \bibitem{araujo_classical_2016} Araújo, G. {\it Some classical inequalities, summability of multilinear operators and strange functions.} Doctoral Thesis, UFPB, João Pessoa, 2016.

\vspace*{-0.5em}
    \bibitem{araujo_optimal_2015} Araújo, G. and Pellegrino, D. {\it Optimal Hardy-Littlewood type inequalities for $m$-linear forms on $\ell_p$ spaces with $1\leq p\leq m$.} Arch. Math. (Basel) 105 (2015), no. 3, 285–295.

\vspace*{-0.5em}
    \bibitem{aron_optimal_2017} Aron, R. M., Núñez-Alarcón, D., Pellegrino, D. M. and Serrano-Rodríguez, D. M. {\it Optimal exponents for Hardy-Littlewood inequalities for $m$-linear operators.} Linear Algebra Appl. {\bf  531} (2017), 399–422.

\vspace*{-0.5em}
    \bibitem{bayart_multiple_2018} Bayart, F. {\it Multiple summing maps: coordinatewise summability, inclusion theorems and $p$-Sidon sets.} J. Funct. Anal. {\bf 274} (2018), no. 4, 1129–1154.

\vspace*{-0.5em}
    \bibitem{bayart_coincidence_2020} Bayart, F., Pellegrino, D. and Rueda, P. {\it On coincidence results for summing multilinear operators: interpolation, $\ell_1$-spaces and cotype}. Collect. Math. {\bf 71} (2020), no. 2, 301–318.



\vspace*{-0.5em}
    \bibitem{blasco_coincidence_2016} Blasco, O., Botelho, G., Pellegrino, D. and Rueda, P. {\it Coincidence results for summing multilinear mappings.} Proc. Edinb. Math. Soc. (2) {\bf 59} (2016), no. 4, 877–897.

\vspace*{-0.5em}
    \bibitem{bombal_multilinear_2004} Bombal, F., Pérez-García, D. and Villanueva, I. {\it Multilinear extensions of Grothendieck’s theorem.} Q. J. Math. {\bf 55} (2004), no. 4, 441–450.

\vspace*{-0.5em}
    \bibitem{botelho_almost_2001} Botelho, G., Braunss, H.-A. and Junek, H. {\it Almost $p$-summing polynomials and multilinear mappings.} Arch. Math. (Basel) {\bf 76} (2001), no. 2, 109–118.

\vspace*{-0.5em}
     \bibitem{botelho_holomorphy_2006} Botelho, G., Braunss, H.-A., Junek, H. and Pellegrino, D. {\it Holomorphy types and ideals of multilinear mappings.} Studia Math. {\bf 177} (2006), no. 1, 43–65.

\vspace*{-0.5em}
    \bibitem{botelho_inclusions_2009} Botelho, G., Braunss, H.-A., Junek, H. and Pellegrino, D. {\it Inclusions and coincidences for multiple summing multilinear mappings}. Proc. Amer. Math. Soc. {\bf 137} (2009), no. 3, 991–1000.



\vspace*{-0.5em}
    \bibitem{botelho_type_2016} Botelho, G. and Campos, J. R. {\it Type and cotype of multilinear operators}. Rev. Mat. Complut. {\bf 29} (2016), no. 3, 659–676.

\vspace*{-0.5em}
    \bibitem{botelho_transformation_2017} Botelho, G. and Campos, J. R. {\it On the transformation of vector-valued sequences by linear and multilinear operators.} Monatsh. Math. {\bf 183} (2017), no. 3, 415–435.

\vspace*{-0.5em}
    \bibitem{botelho_operator_2017} Botelho, G., Campos, J. R. and Santos, J. {\it Operator ideals related to absolutely summing and Cohen strongly summing operators.} Pacific J. Math. {\bf 287} (2017), no. 1, 1–17.


\vspace*{-0.5em}
    \bibitem{botelho_summing_2020} Botelho, G. and Freitas, D. {\it Summing multilinear operators by blocks: the isotropic and anisotropic cases.} J. Math. Anal. Appl. {\bf 490} (2020), no. 1, 124203, 21 p.

\vspace*{-0.5em}
    \bibitem{botelho_complete_2020} Botelho, G. and Luiz, J. L. P. {\it Complete latticeability in vector-valued sequence spaces}, Math. Nachr, to appear. Available at arXiv:2004.01604 [math],  2020.






\vspace*{-0.5em}
    \bibitem{botelho_when_2009} Botelho, G. and Pellegrino, D. {\it When every multilinear mapping is multiple summing}. Math. Nachr. {\bf 282} (2009), no. 10, 1414–1422.


\vspace*{-0.5em}
    \bibitem{botelho_summability_2008} Botelho, G., Pellegrino, D. and Rueda, P. {\it Summability and estimates for polynomials and multilinear mappings.} Indag. Math. (N. S.) {\bf 19} (2008), no. 1, 23–31.









\vspace*{-0.5em}
    \bibitem{campos_cohen_2014} Campos, J. R. {\it Cohen and multiple Cohen strongly summing multilinear operators.} Linear Multilinear Algebra {\bf 62} (2014), no. 3, 322–346.

\vspace*{-0.5em}
    \bibitem{joedsonjamilson} Campos, J. R. and Santos, J., {\it An anisotropic approach to mid summable sequences}. Colloq. Math. {\bf 161} (2020), no. 1, 35–49.

\vspace*{-0.5em}
    \bibitem{blas20201} Caraballo, B. M. and Fávaro, V. V. {\it Chaos for convolution operators on the space of entire functions of infinitely many complex variables}. Bull. Soc. Math. France {\bf 148} (2020), no. 2, 237–251.

\vspace*{-0.5em}
    \bibitem{blas20202} Caraballo, B. M. and Fávaro, V. V. {\it
Strongly mixing convolution operators on Fréchet spaces of entire functions of a given type and order}. Integral Equations Operator Theory {\bf 92} (2020), no. 4, Paper No. 31, 27 pp.

\vspace*{-0.5em}
    \bibitem{carando_coherent_2009} Carando, D., Dimant, V. and Muro, S. {\it Coherent sequences of polynomial ideals on Banach spaces}. Math. Nachr. {\bf 282} (2009), no. 8, 1111–1133.

\vspace*{-0.5em}
    \bibitem{carando_every_2012} Carando, D., Dimant, V. and Muro, S. {\it Every Banach ideal of polynomials iscompatible with an operator ideal}. Monatsh. Math. {\bf 165} (2012), no. 1, 1–14.

\vspace*{-0.5em}
    \bibitem{carando_holomorphic_2012} Carando, D., Dimant, V. and Muro, S. {\it Holomorphic functions and polynomial ideals on Banach spaces.} Collect.  Math. {\bf 63} (2012), no. 1, 71–91.

\vspace*{-0.5em}
    \bibitem{mathnotes} Carando, D., Mazzitelli, M. and Sevilla-Peris, P. {\it A note on the symmetry of sequence spaces}, Math. Notes, to appear (disponível em arXiv:1905.11621, 2019).



\vspace*{-0.5em}
    \bibitem{defant_coordinatewise_2010} Defant, A., Popa, D. and Schwarting, U. {\it Coordinatewise multiple summing operators in Banach spaces.} J.  Funct. Anal. {\bf 259} (2010), no. 1, 220–242.



\vspace*{-0.5em}
    \bibitem{dimant_diagonal_2019} Dimant, V. and Villafañe, R. {\it Diagonal multilinear operators on Köthe sequence spaces.} Linear Multilinear Algebra {\bf 67} (2019), no. 2, 248–266.

\vspace*{-0.5em}
    \bibitem{dineen_complex_1999} Dineen, S. {\it Complex analysis on infinite-dimensional spaces.} Springer Monographs in Mathematics. Springer-Verlag London, Ltd., London, 1999.

\vspace*{-0.5em}
    \bibitem{fernandez_closed_1996} Fernández, C. S. {\it The closed graph theorem for multilinear mappings.} Internat. J. Math. Math. Sci. {\bf 19} (1996), no. 2, 407–408.

\vspace*{-0.5em}
     \bibitem{fg} Floret, K., and García, D. {\it On ideals of polynomials and multilinear mappings between Banach spaces}. Arch. Math. (Basel) {\bf 81}  (2003), 300–308.




\vspace*{-0.5em}
\bibitem{karnsinha} Karn, A. K. and Sinha, D. P. {\it An operator summability of sequences in Banach spaces}, Glasg. Math. J. {\bf 56} (2014), 427--437.

\vspace*{-0.5em}
    \bibitem{kim_multiple_2007} Kim, S. G. {\it Multiple weakly summing multilinear mappings and polynomials.} Kyungpook Math. J. {\bf 47} (2007), no. 4, 501–517.


\vspace*{-0.5em}
    \bibitem{matos_fully_2003} Matos, M. C. {\it Fully absolutely summing and Hilbert-Schmidt multilinear mappings.} Collect. Math. {\bf 54} (2003), no. 2, 111–136.



\vspace*{-0.5em}
    \bibitem{mezrag_inclusion_2009} Mezrag, L. and Saadi, K. {\it Inclusion theorems for Cohen strongly summing multilinear operators}. Bull. Belg. Math. Soc. Simon Stevin {\bf 16} (2009), no. 1, 1--11.

\vspace*{-0.5em}
    \bibitem{montanaro_applications_2012} Montanaro, A. {\it Some applications of hypercontractive inequalities in quantum information theory.} J.  Math. Phys. {\bf 53} (2012), no. 12, 122206, 15 pp.

\vspace*{-0.5em}
    \bibitem{mujica_complex_1986} Mujica, J. {\it Complex analysis in Banach spaces,} Dover Publications, 2010.

\vspace*{-0.5em}
    \bibitem{muro2017} Muro, S., Pinasco, D. and Savransky, M. {\it Hypercyclic behavior of some non-convolution operators on $H(\mathbb{C}^{\mathbb{N}})$}. J. Operator Theory {\bf 77} (2017), no. 1, 39–59.


\vspace*{-0.5em}
    \bibitem{muro2018} Muro, S., Pinasco, D. and Savransky, M. {\it Dynamics of non-convolution operators and holomorphy types}. J. Math. Anal. Appl. {\bf 468} (2018), 622--641

\vspace*{-0.5em}
    \bibitem{nunez-alarcon_sharp_2019} Núñez-Alarcón, D., Pellegrino, D. and Serrano-Rodríguez, D. M. {\it Sharp anisotropic Hardy-Littlewood inequality for positive multilinear forms.} Results Math. {\bf 74} (2019), no. 4, Paper No. 193, 10 pp.

\vspace*{-0.5em}
    \bibitem{pellegrino_almost_2012} Pellegrino, D. and Ribeiro, J. {\it On almost summing polynomials and multilinear mappings}. Linear  Multilinear Algebra {\bf 60} (2012), no. 6, 397–413.

\vspace*{-0.5em}
    \bibitem{pellegrino_multi-ideals_2014} Pellegrino, D. and Ribeiro, J. {\it On multi-ideals and polynomial ideals of Banach spaces: a new approach to coherence and compatibility.} Monatsh. Math. {\bf 173} (2014), no. 3, 379–415.




\vspace*{-0.5em}
    \bibitem{pellegrino_fully_2005} Pellegrino, D. and Souza, M. {\it Fully summing multilinear and holomorphic mappings into Hilbert spaces.} Math. Nachr. {\bf 278} (2005), no. 7-8, 877–887.

\vspace*{-0.5em}
    \bibitem{pellegrino_fully_2006} Pellegrino, D. M. and Souza, M. L. V. {\it Fully and strongly almost summing multilinear mappings.}  Rocky Mountain J. Math. {\bf 36} (2006), no. 2, 683–698.



\vspace*{-0.5em}
    \bibitem{perez-garcia_unbounded_2008} Pérez-García, D., Wolf, M. M., Palazuelos, C., Villanueva, I. and Junge, M. {\it Unbounded violation of tripartite Bell inequalities.} Comm. Math. Phys. {\bf 279} (2008), no. 2, 455–486.

\vspace*{-0.5em}
    \bibitem{pietsch_operator_1978} Pietsch, A. {\it Operator Ideals}, North-Holland Publishing Company, 1980.

\vspace*{-0.5em}
    \bibitem{pietsch_ideals_1984} Pietsch, A. {\it Ideals of multilinear functionals (designs of a theory).} In Proceedings of the second international conference on operator algebras, ideals, and their applications in theoretical physics (Leipzig, 1983)(1984), vol. 67 of Teubner-Texte Math., pp. 185–199.

\vspace*{-0.5em}
    \bibitem{popa_mixing_2012} Popa, D. {\it Mixing multilinear operators.} Illinois J. Math. {\bf 56} (2012), no. 3, 885–903.

\vspace*{-0.5em}
    \bibitem{popa_composition_2014} Popa, D. {\it Composition results for strongly summing and dominated multilinear operators}. Open Math. {\bf 12} (2014), no. 10, 1433–1446.

\vspace*{-0.5em}		
    \bibitem{popa_operators_2020} Popa, D. {\it Operators with the Maurey–Pietsch multiple splitting property}. Linear Multilinear Algebra {\bf 68} (2020), no. 6, 1201–1217.




\vspace*{-0.5em}
    \bibitem{ribeiro_generalized_2019} Ribeiro, J. and Santos, F. {\it Generalized multiple summing multilinear operators on Banach spaces.} Mediterr. J. Math. {\bf 16} (2019), no. 5, Paper No. 108, 20.



\vspace*{-0.5em}
    \bibitem{ribeiro_absolutely_2021} Ribeiro, J. and Santos, F. {\it Absolutely summing polynomials}. Methods Funct. Anal. Topology {\bf 27} (2021), 74–87. 

\vspace*{-0.5em}
    \bibitem{ribeiro_coherence_2020} Ribeiro, J., Santos, F. and Torres, E. R. {\it Coherence and compatibility: a stronger approach}. Linear Multilinear Algebra, to appear.







\vspace*{-0.5em}
    \bibitem{zalduendo_estimate_1993} Zalduendo, I. {\it An estimate for multilinear forms on $l^p$-spaces.}  Proc. Roy. Irish Acad. Sect. A {\bf 93} (1993), no. 1, 137–142.

\end{thebibliography}
\end{document}